\documentclass[11pt,letterpaper]{article}
\usepackage[margin=0.9in]{geometry}
\usepackage[utf8]{inputenc} 
\usepackage[T1]{fontenc}    
\usepackage{dsfont}
\usepackage{multirow}
\usepackage{mathrsfs}
\usepackage{wrapfig}
\usepackage[T1]{fontenc}
\usepackage{float}
\usepackage{bm}

\usepackage{booktabs}       
\usepackage{amsfonts}       
\usepackage{nicefrac}       
\usepackage{microtype}      
\usepackage{enumerate}
\usepackage{lipsum}
\usepackage{pifont}
\usepackage{tcolorbox}
\usepackage{mathtools}
\usepackage{cuted}
\usepackage{float}
\usepackage{mdframed}
\usepackage[dvipsnames,table,xcdraw]{xcolor}
\usepackage{bbm}
\usepackage{varioref}
\usepackage{hyperref}
                                
\usepackage{amssymb} 
\usepackage{rotating}
\usepackage{graphicx}
\usepackage{subcaption}
\usepackage{caption}
\usepackage{xcolor}
\usepackage{pifont}
\usepackage[normalem]{ulem}

\usepackage{amsthm}
\usepackage{algorithm}
\usepackage{algpseudocode}
\usepackage{amsmath}
\usepackage{tikz}
\usetikzlibrary{arrows.meta, positioning}
\usetikzlibrary{decorations.pathreplacing, calc}
\usetikzlibrary{shapes, shapes.geometric}

\newtheorem{theorem}{Theorem}[section]
\newtheorem{lemma}{Lemma}[section]
\newtheorem{remark}{Remark}[section]
\newtheorem{proposition}{Proposition}[section]
\newtheorem{definition}{Definition}[section]
\newtheorem{corollary}{Corollary}[section]
\theoremstyle{plain}

\newcommand{\abbr}[1]{\textup{(}#1\textup{)}}

\title{\Large
Smoothing-Enabled Randomized Stochastic Gradient Schemes for Solving Nonconvex Nonsmooth Potential Games under Uncertainty
}
\author{Zhuoyu Xiao\footnote{Department of Industrial and Operations Engineering, University of Michigan, Ann Arbor, MI, 48109. Email: zyxiao@umich.edu.}}

\newmdenv[
  backgroundcolor=gray!20,
  linecolor=white,
  linewidth=0pt,
  innertopmargin=10pt,
  innerbottommargin=10pt,
  innerleftmargin=10pt,
  innerrightmargin=10pt,
]{custombox}

\begin{document}

\maketitle
\thispagestyle{empty}

\begin{abstract}
The state of the art in solving nonconvex nonsmooth games under uncertainty remains in its infancy. Existing studies primarily rely on stringent growth and sharpness conditions or local convexity-like properties, making the development of alternative algorithms desirable. In this work, we study the Clarke-Nash equilibria (CNE) computation of a class of stochastic $N$-player nonconvex nonsmooth games characterized by a potential function. We first consider the nonconvex smooth setting and develop a randomized stochastic gradient (RSG) scheme. The RSG scheme achieves the optimal sample complexity of $\mathcal{O}(N^{2}\epsilon^{-4})$ for reaching a point whose expected residual has norm at most $\epsilon$. Building on this result, we introduce a randomized smoothed RSG (RS-RSG) scheme for solving stochastic potential games afflicted by nonconvexity and nonsmoothness. We show that RS-RSG asymptotically converges to an equilibrium of the smoothed game with sample complexity $\mathcal{O}(L^{4}_{\max}n^{3/2}_{\max}N^{3}\eta^{-1}\epsilon^{-4})$,  where $\eta>0$ is the smoothing parameter. Under Lipschitz continuity of the Clarke subdifferentials, we show that the expected residual evaluated at the smoothed equilibrium is $\mathcal{O}(\eta^{2})$. In addition, we discuss the biased RSG and RS-RSG variants and demonstrate the effectiveness of the biased RS-RSG scheme on a class of stochastic potential hierarchical games where the exact lower-level solution is unavailable in finite time. Collectively, our results provide a new pathway that goes beyond classical conditions for solving stochastic nonconvex nonsmooth games. Some preliminary numerics are also provided.
\end{abstract}

\textbf{Key words.} nonconvex nonsmooth games, potential games, Clarke-Nash equilibrium, randomized smoothing, stochastic gradient descent

\section{Introduction}\label{Sec-1}

The Nash equilibrium (NE) \cite{nash-1951} serves as a central solution concept in noncooperative multi-agent decision-making, capturing situations in which no participant can reduce its cost or improve its payoff by unilaterally altering its strategy. This framework naturally arises in engineered fields and economic environments \cite{hobbs-pang-2007, metzler-hobbs-pang-2003, scutari-palomar-facchinei-pang-2010, scutari-palomar-pang-facchinei-2009} where rational agents interact through coupled constraints or shared resources. The study of NE has therefore become essential for characterizing system-level outcomes that emerge from decentralized behavior. In recent years, the increasing focus on large-scale optimization and machine learning has further expanded the role of Nash equilibria, such as convex–concave saddle-point minimax problems \cite{kovalev-gasnikov-2022, lin-jin-jordan-2020, wang-li-2020} in high-dimensional regimes. Many efforts in computing equilibria in continuous-strategy games with both smooth and nonsmooth objectives have relied on gradient-response and best-response methods, along with their stochastic variants, largely under the setting where each player’s problem is convex \cite{facchinei-pang-2009, koshal-nedic-shanbhag-2016, lei-shanbhag-2020, lei-shanbhag-2022, lei-shanbhag-pang-sen-2020}. In this paper, we consider the setting where player-specific functions are possibly nonconvex, nonsmooth, and expectation-valued. Specifically, our focus lies on the stochastic $N$-player noncooperative game
\begin{equation*}\label{games}
    \min_{x_i \in X_i} ~ f_i(x_i, x_{-i}) \triangleq \mathbb{E} \Big[ \tilde{f}_i(x_i, x_{-i}, \xi) \Big], ~ \forall i\in [N], \tag{$\mathrm{G}$}
\end{equation*}
where the random variable $\xi: \Omega \to \mathbb{R}^m$ is defined on the probability space $(\Omega, \mathcal{F}, \mathbb{P})$ and $\Xi \triangleq \left\{ \xi(\omega) \mid \omega \in \Omega \right\}$, each $X_{i} \subseteq \mathbb{R}^{n_{i}}$ is convex and closed, $x_{-i} = (x_j)_{j \ne i} \in X_{-i} \triangleq {\prod_{j \ne i}}\ X_j$, $x = (x_{i})_{i=1}^{N} \in X \triangleq {\prod_{j =1}^{N}}\ X_j \subseteq \mathbb{R}^{n}$ with $n \triangleq \sum_{i=1}^{N} n_{i}$, and each $\Tilde{f}_{i}(\bullet, x_{-i}, \xi)$ is a possibly nonconvex and nonsmooth real-valued function for given $x_{-i}$ and $\xi$.

\subsection{Motivating Examples}\label{two-motivating-examples}

Most existing studies on stochastic game-theoretic problems focus on convex and smooth regimes. However, nonconvexity and nonsmoothness are pervasive in practical stochastic game-theoretic models and remain largely unexplored. Below we provide two examples to illustrate how widespread this problem class is in economic and engineered applications.

\subsubsection{Stochastic Nash-Cournot Games with Piecewise-Linear Concave Costs}

Nash--Cournot models provide a canonical framework for studying strategic production in oligopolistic market equilibrium. Classical analysis often relies on increasing convex production costs, which leads to tractable variational inequality formulations. However, such assumptions may fail to capture economies of scale \cite{osullivan-sheffrin-swan-2003} and decreasing marginal costs. The class of piecewise-linear concave cost functions \cite{muu-nguyen-quy-2008} provides a flexible alternative, but also introduces nonconvexity and nonsmoothness, thereby motivating the study of efficient computational schemes for the resulting games. 

To this end, we consider the stochastic generalization of the problems considered in \cite{muu-nguyen-quy-2008}, where the $i$th player solves the following parameterized stochastic optimization problem for given $x_{-i}\in X_{-i}$:
\begin{equation}\label{NC-i}
    \min_{x_{i}\in X_{i}} ~ f_{i}(x_{i}, x_{-i}) \triangleq \mathbb{E} \left[ \Tilde{c}_{i}(\xi) \right] g_{i}(x_{i}) - \mathbb{E} \left[ \Tilde{p}(\bar{x}, \xi) \right] x_{i}, ~ \forall i\in [N], \tag{NC-PL}
\end{equation}
where $\mathbb{E}[\Tilde{c}_{i}(\xi)]g_{i}(x_{i})$ denotes the expected private cost of the $i$th player, and the linear inverse demand function is $\Tilde{p}(\bar{x}, \xi) = a(\xi) - b(\xi)\bar{x}$ for random variables $a(\xi)$ and $b(\xi)$, with $\bar{x} \triangleq \sum_{i=1}^{N} x_{i}$. Suppose that $\mathbb{E}[\Tilde{c}_{i}(\xi)] > 0$ and the increasing piecewise-linear concave cost function $g_{i}(x_{i}): \mathbb{R}_{+} \to \mathbb{R}_{+}$ is given by 
\begin{equation*}
    g_{i}(x_{i}) \triangleq \begin{cases}
        a_{0} x_{i}, \;\; &\textrm{if} \;\; 0 \leq x_{i} \leq d_{1}, \\
        a_{1} x_{i} + b_{1}, \;\; &\textrm{if} \;\; d_{1} \leq x_{i} \leq d_{2}, \\
        \qquad \vdots \\
        a_{m} x_{i} + b_{m}, \;\; &\textrm{if} \;\; d_{m} \leq x_{i},
    \end{cases}
\end{equation*}
where we assume that $a_{0} > a_{1} > \cdots > a_{m} > 0$ and $0 < b_{1} < b_{2} < \cdots < b_{m}$. An illustrative example is provided in Figure \ref{piecewise-linear-concave-cost}, where $g_i(x_i)$ is seen to be a nonconvex nonsmooth cost function.

\begin{figure}
    \centering
    \begin{tikzpicture}[
    >=Stealth, 
    xscale=0.18, 
    yscale=0.12
]

    \draw[->, thick] (-3, 0) -- (44, 0) node[below left] {$x_{i}$};
    \draw[->, thick] (0, -3) -- (0, 34) node[above] {$g_{i}(x_{i})$};

    \coordinate (Start) at (-1.5, -3); 
    \coordinate (O)   at (0, 0);
    \coordinate (D1)  at (4, 8);   
    \coordinate (D2)  at (12, 16); 
    \coordinate (D3)  at (32, 26); 
    \coordinate (End) at (41, 28.2); 

    \draw[thin, gray] (D1) -- (4, 0) node[below, black] {$d_{1}$};
    \draw[thin, gray] (D2) -- (12, 0) node[below, black] {$d_{2}$};
    \draw[thin, gray] (D3) -- (32, 0) node[below, black] {$d_{3}$};

    \draw[thick] (Start) -- (O);
    \draw[very thick] (O) -- (D1) -- (D2) -- (D3) -- (End);

    \node[below right] at (1.8, 5) {$a_{0} x_{i}$};
    \node[above left] at (10, 13) {$a_{1} x_{i} + b_{1}$};
    \node[above left] at (23, 21.5) {$a_{2} x_{i} + b_{2}$};
    \node[above] at (36, 29) {$a_{3} x_{i} + b_{3}$};
\end{tikzpicture}
\caption{A piecewise-linear concave cost function.}
\label{piecewise-linear-concave-cost}
\end{figure}

\subsubsection{Stochastic Nonconvex Nonsmooth Hierarchical Games}

Stochastic hierarchical games \cite{cui-shanbhag-2023, cui-shanbhag-staudigl-2025, lei-shanbhag-chen-2026} provide a framework for modeling multi-agent decision problems in which each player’s objective is shaped not only by rival strategies, but also by the solution of a lower-level equilibrium problem. Such models arise naturally in stochastic bilevel programming \cite{chen-sun-yin-2021, hong-wai-wang-yang-2023}, stochastic MPECs \cite{cui-shanbhag-yousefian-2023, patriksson-wynter-1999}, and stochastic multi-leader multi-follower games \cite{aussel-svensson-2020, demiguel-xu-2009}. Existing computational efforts, however, have largely focused on regimes where the implicit player-specific objectives are convex and the associated mappings satisfy monotonicity property. These classical assumptions are crucial for variational characterizations and convergence guarantees, but they exclude many applications involving nonconvex objectives or nonmonotone mappings. This motivates the development of algorithms for nonconvex and nonmonotone regimes.

To this end, we consider a class of stochastic $N$-player two-stage nonconvex hierarchical games with uncertainty in the lower-level equilibrium problem. Suppose that the $i$th leader's problem is defined as
\begin{equation*}
    \min_{x_{i}\in X_{i}} ~ f_{\mathrm{H}, i}(x_{i}, x_{-i}) \triangleq \mathbb{E} \Big[ \Tilde{h}_{i}(x_{i}, y_{i}(x_{i}), \xi) \Big] + \mathbb{E} \Big[ \Tilde{m}_{i}(x_{i}, x_{-i}, \xi) \Big], ~ \forall i\in [N],
\end{equation*}
where $f_{\mathrm{H}, i}(\bullet, x_{-i})$ is possibly nonconvex, and 
$y_{i}(x_{i})$ denotes a solution to the follower's lower-level stochastic variational inequality $\mathrm{SVI}\:( \mathbb{E}[\Tilde{F}_{i}(x_{i}, \bullet, \xi)], Y_{i} )$ parameterized by the leader's decision $x_i$, i.e.,
\begin{equation*}
    y_{i}(x_{i}) \in \mathrm{SOL}\:( \mathbb{E}[\Tilde{F}_{i}(x_{i}, \bullet, \xi)], Y_{i} ).
\end{equation*}
The implicit dependence of the leader's objective on the lower-level solution map $y_i(\bullet)$ introduces nonsmoothness, thereby giving rise to a stochastic nonconvex nonsmooth hierarchical game.

\subsection{Related Work}\label{Sec-1.1}

This paper is closely related to several influential works in stochastic, nonconvex, and nonsmooth optimization and game-theoretic problems, as well as in nonmonotone variational inequalities. We review them below.

~

\noindent\textbf{SA and RSG in stochastic programming.} The stochastic approximation (SA) method was first introduced by Robbins and Monro \cite{robbins-monro-1951} for stochastic strongly convex problems, with acceleration later presented in \cite{polyak-juditsky-1992} by Polyak and Juditsky. Nemirovski et al. \cite{nemirovski-juditsky-lan-shapiro-2009} proposed the mirror descent SA scheme for solving general nonsmooth convex stochastic programming. For unconstrained and constrained stochastic nonconvex problems, Ghadimi et al.\ \cite{ghadimi-lan-2013, ghadimi-lan-zhang-2016} proposed the RSG scheme with the sample complexity of $\mathcal{O}(\epsilon^{-4})$ and its zeroth-order variant. In Arjevani et al. \cite{arjevani-carmon-duchi-foster-srebro-woodworth-2023}, they showed that the $\mathcal{O}(\epsilon^{-4})$ complexity bound is optimal for first-order methods in stochastic nonconvex optimization under the standard unbiasedness and bounded variance assumptions.

~

\noindent\textbf{Randomized smoothing and zeroth-order methods.} In this work, we study a class of stochastic games with player-specific Lipschitz continuous objectives that are possibly nonconvex and nonsmooth. One line of research for deterministic nonconvex nonsmooth optimization with Lipschitz continuous objectives relies on gradient-sampling methods \cite{burke-curtis-lewis-overton-simoes-2020, burke-lewis-overton-2005, kiwiel-2007, kiwiel-2010}. Another avenue leverages zeroth-order approaches reliant on randomized smoothing \cite{lin-zheng-jordon-2022, marrinan-shanbhag-yousefian-2026}, applicable in both deterministic and stochastic regimes. Specifically, Marrinan et al.\ \cite{marrinan-shanbhag-yousefian-2026} extended the work by Lin et al. \cite{lin-zheng-jordon-2022} to constrained settings and developed a variance-reduced zeroth-order gradient scheme variant. The sample complexities for driving the residual of the $\eta$-smoothed problem to at most $\epsilon$ are $\mathcal{O}(n^{3/2}(L^{4}+L^{3}\eta^{-1})\epsilon^{-4})$ and $\mathcal{O}(n^{3/2}(L^{5}+L^{3}\eta^{-2})\epsilon^{-4})$ in \cite{lin-zheng-jordon-2022} and \cite{marrinan-shanbhag-yousefian-2026}, respectively, where $L$ is the Lipschitz continuity constant and $n$ is the problem dimension. The zeroth-order methods have also been applied to solve stochastic MPECs \cite{cui-shanbhag-yousefian-2023} and hierarchical federated optimization \cite{qiu-shanbhag-yousefian-2023} recently.

~

\noindent\textbf{Nonconvex games and nonmonotone VIs.} For certain nonconvex games, NE may fail to exist \cite{pang-scutari-2011, pang-scutari-2013}. Two weaker solution concepts of quasi-Nash equilibria (QNE) \cite{pang-scutari-2011} and Clarke-Nash equilibria (CNE) \cite{cui-pang-2021} were introduced recently. From the computational standpoint, there are two approaches for solving continuous-strategy noncooperative games. The first approach concerns gradient–response (GR) schemes based on variational inequality (VI) \cite{facchinei-pang-2003} formulations. While many studies assume monotonicity \cite{facchinei-pang-2009, koshal-nedic-shanbhag-2013, lei-shanbhag-2022, yousefian-nedic-shanbhag-2016}, recent works consider nonmonotone VIs under the Minty condition and its variant \cite{alacaoglu-kim-wright-2024, diakonikolas-daskalakis-jordan-2021, hsieh-iutzeler-malick-mertikopoulos-2020, huang-zhang-2024, vankov-nedic-sankar-2023}, pseudomonotonicity and its variants \cite{dang-lan-2015, iusem-jofre-oliveira-thompson-2017, iusem-jofre-oliveira-thompson-2019, kannan-shanbhag-2019, kotsalis-lan-li-2022, yousefian-nedic-shanbhag-2017}, and cohypomonotonicity \cite{cai-alacaoglu-diakonikolas-2023, cai-oikonomou-zheng-2022, cai-zheng-2022, lee-kim-2021}. Xiao and Shanbhag \cite{xiao-shanbhag-2025-gr} recently established last-iterate guarantees for solving stochastic nonconvex smooth games under certain growth and sharpness conditions. However, these growth and sharpness conditions are relatively stringent and hard to verify sometimes. Moreover, existing works largely concern the smooth setting, whereas subgradient and smoothing-enabled gradient methods for the nonconvex nonsmooth regime remain largely unexplored. The second approach concerns best-response (BR) procedures. The synchronous and asynchronous BR schemes have been developed for convex games \cite{cui-shanbhag-2023, lei-shanbhag-2020, lei-shanbhag-2022, lei-shanbhag-pang-sen-2020, xiao-shanbhag-2026-br}, and for nonconvex games via surrogations \cite{cui-pang-2021, pang-razaviyayn-2016, razaviyayn-2014, xiao-shanbhag-2026-br}. However, studies on synchronous BR schemes typically rely on a contraction assumption, implicitly requiring that the player-specific objectives satisfy convexity or local convexity-like properties.

\vspace{8pt}

\begin{tcolorbox}
    \textbf{Question.} Can we develop efficient smoothing-enabled stochastic gradient schemes with convergence and rate guarantees that go beyond classical growth, sharpness, or local convexity-like conditions for solving nonconvex nonsmooth games under uncertainty?
\end{tcolorbox}

\subsection{Main Contributions and Outline}\label{Sec-1.2}

Our main contributions are articulated next.
\begin{itemize}
    \item\textbf{Potentiality-based GR schemes.} There are three common approaches employed for GR and BR schemes in computing equilibria: contraction approach, potentiality approach, and the VI-based approach. The first two are leveraged within the BR framework \cite{cui-shanbhag-2023, cui-pang-2021, lei-shanbhag-2020, lei-shanbhag-2022, lei-shanbhag-pang-sen-2020, pang-razaviyayn-2016, razaviyayn-2014, xiao-shanbhag-2026-br}, while the GR schemes rely on the VI-based assumption \cite{facchinei-pang-2009, kannan-shanbhag-2012, lei-shanbhag-2022, xiao-shanbhag-2025-gr, yousefian-nedic-shanbhag-2016}. To the best of our knowledge, our work is the first one to investigate the gradient-type schemes under the potentiality condition. See Figure \ref{gr-br-overview} for an illustration. In this work, we first consider the stochastic nonconvex smooth games and show our RSG scheme achieves the optimal $\mathcal{O}(N^{2}\epsilon^{-4})$ sample complexities to reach a point whose expected residual has norm at most $\epsilon$, where $N$ is the number of players. The merit of our RSG scheme is that it does not rely on the stringent growth and sharpness conditions in \cite{xiao-shanbhag-2025-gr} when contending with nonconvexity.

    \begin{figure}[h!]
    \centering
    \begin{tikzpicture}[>=latex, font=\small]

    \node[draw, ellipse, minimum width=2.8cm, minimum height=1.4cm, align=center] (BR) at (-2.9, 2.7) {\textbf{BR}};
    \node[draw, ellipse, minimum width=2.8cm, minimum height=1.4cm, align=center] (GR) at (2.9, 2.7) {\textbf{GR}};

    \node[draw, ellipse, minimum width=3.4cm, minimum height=1.4cm, align=center] (CON) at (-5.8, 0.0) {\textbf{Contraction}};
    \node[draw, ellipse, minimum width=3.4cm, minimum height=1.4cm, align=center] (POT) at (0.0, 0.0) {\textbf{Potentiality}};
    \node[draw, ellipse, minimum width=3.4cm, minimum height=1.4cm, align=center] (VI)  at (5.8, 0.0) {\textbf{VI Assump.}};

    \draw[thick] (BR) -- (CON)
        node[pos=0.7, above] {\scriptsize \cite{cui-pang-2021, lei-shanbhag-2022, lei-shanbhag-pang-sen-2020, pang-razaviyayn-2016, razaviyayn-2014, xiao-shanbhag-2026-br}};
    \draw[thick] (BR) -- (POT)
        node[pos=0.7, above] {\scriptsize \cite{cui-shanbhag-2023, cui-pang-2021, lei-shanbhag-2020, pang-razaviyayn-2016, razaviyayn-2014, xiao-shanbhag-2026-br}};
    \draw[thick] (GR) -- (VI)
        node[pos=0.7, above] {\scriptsize \cite{facchinei-pang-2009, kannan-shanbhag-2012, lei-shanbhag-2022, xiao-shanbhag-2025-gr, yousefian-nedic-shanbhag-2016}};

    \draw[dashed,thick] (GR) -- (POT)
        node[pos=0.6, above, fill=white, inner sep=1pt] {\scriptsize \textbf{This Work}};

    \end{tikzpicture}
    \caption{An overview of common assumptions for BR and GR schemes.}
    \label{gr-br-overview}
    \end{figure}
    
    \item\textbf{Nonconvex nonsmooth extension and CNE computation.} In this paper, we also consider stochastic potential games with Lipschitz continuous objectives and develop a randomized smoothed RSG (RS-RSG) scheme. We show that RS-RSG scheme converges asymptotically to an equilibrium of the smoothed games with sample complexity $\mathcal{O}(L^{4}_{\max}n^{3/2}_{\max}N^{3}\eta^{-1}\epsilon^{-4})$. The set of Clarke–Nash equilibria (CNE) can be captured by the solution set of a generalized variational inequality (GVI). We show that, under Lipschitz continuity of the Clarke subdifferentials, the expected residual of GVI evaluated at the smoothed equilibrium is $\mathcal{O}(\eta^{2})$, where $\eta > 0$ denotes the randomized smoothing parameter. To the best of our knowledge, our work is the first to establish convergence guarantees for a CNE-seeking algorithm in the nonconvex nonsmooth setting.
    
    \item\textbf{Biased RSG/RS-RSG schemes and stochastic hierarchical games.} The unbiasedness assumption is crucial in the convergence analysis of stochastic gradient schemes. However, ensuring unbiased gradient estimation can be particularly difficult in some contemporary stochastic optimization problems such as distributionally robust optimization (DRO) \cite{kuhn-shafiee-wiesemann-2025} and stochastic bilevel optimization \cite{chen-sun-yin-2021, hong-wai-wang-yang-2023}. Motivated by this, we study the biased RSG scheme and show that it drives the expected residual to zero if the bias sequence is summable, with iteration complexity $\mathcal{O}(N\epsilon^{-2})$ and sample complexity $\mathcal{O}(N^{4}\epsilon^{-4})$. We also evaluate the efficiency of the biased RS-RSG scheme on a class of stochastic hierarchical games for which the exact lower-level solution information is unavailable in finite time.
\end{itemize}

\begin{table}[htbp]
\centering
\normalsize
\setlength{\tabcolsep}{8pt}
\renewcommand{\arraystretch}{2.0}
\begin{tabular}{c ccccc}
\toprule
\toprule
 & \textbf{RSG} & \textbf{b-RSG} & \textbf{RS-RSG} & \textbf{b-RS-RSG} \\
\midrule
\midrule
\shortstack{\textbf{iteration}\\\textbf{complexity}} & $\mathcal{O}(\epsilon^{-2})$ & $\mathcal{O}(N\epsilon^{-2})$ & $\mathcal{O}(L^{3}_{\max} n_{\max} N \eta^{-1} \epsilon^{-2})$ & $\mathcal{O}(L^{3}_{\max}n^{7/2}_{\max}N^{2}\eta^{-4}\epsilon^{-2})$ \\
\shortstack{\textbf{sample}\\\textbf{complexity}} & $\mathcal{O}(N^{2}\epsilon^{-4})$ & $\mathcal{O}(N^{4}\epsilon^{-4})$ & $\mathcal{O}(L^{4}_{\max}n^{3/2}_{\max}N^{3}\eta^{-1}\epsilon^{-4})$ & $\mathcal{O}(L^{4}_{\max} n^{13/2}_{\max} N^{5}\eta^{-7}\epsilon^{-4})$ \\
\bottomrule
\bottomrule
\end{tabular}
\caption{A summary of complexities.}
\label{rsg-errors}
\end{table}

We summarize the complexities in Table \ref{rsg-errors}. The remainder of this paper is organized as follows. In Section \ref{Sec-2}, we provide the mathematical preliminaries. The main RSG algorithm as well as its biased variant for the smooth case are described in Section \ref{Sec-3}. In Section \ref{Sec-4}, we discuss the RS-RSG scheme and the equilibria approximation result. In Section \ref{Sec-5}, we demonstrate the biased RS-RSG scheme on a challenging class of stochastic potential hierarchical games. Preliminary numerics and final conclusions are provided in Sections \ref{Sec-6} and \ref{Sec-7}, respectively.

\:

\noindent\textbf{Notations.} We denote the Euclidean projection of $x$ onto set $X$ by $\Pi_{X}[x]$. The symbol $\mathbb{B}(x; \delta)$ denotes the closed ball of radius $\delta>0$ centered at $x\in \mathbb{R}^{n}$. The interior and convex hull of set $X$ are denoted by $\mathrm{int}(X)$ and $\mathrm{conv}(X)$, respectively. Let $A$ and $B$ be two nonempty subsets of $\mathbb{R}^{n}$. The distance from a vector $x\in \mathbb{R}^{n}$ to $A$ is defined as $\mathrm{dist}\: (x, A) \triangleq \inf_{y\in A} \| y-x \|$. The one-sided deviation of $A$ from $B$ is defined as $\mathbb{D}(A, B) \triangleq \sup_{x\in A} \mathrm{dist}\: (x, B)$. Given a Lipschitz continuous function $f: \mathbb{R}^{n} \to \mathbb{R}$, let $\partial^{C}f(x)$ denote the Clarke subdifferential of $f$ at $x\in \mathrm{dom}\:(f)$. We denote the Fr{\'e}chet normal cone to $X$ at $x$ by $\mathcal{N}_{X}(x)$. We refer the reader to the monographs \cite{cui-pang-2021, rockafellar-wets-1998} for their definitions and properties.

\section{Preliminaries}\label{Sec-2}

\subsection{Randomized Smoothing}\label{Sec-2.1}

Let $f: X\subseteq \mathbb{R}^{n} \to \mathbb{R}$ be a locally Lipschitz continuous function. Recall that the randomized smoothing \cite{steklov-1907} of $f(x)$ with $\eta > 0$ is defined as 
\begin{equation*}
    f^{\eta}(x) \triangleq \mathbb{E}_{u\in \mathbb{B}}[f(x+\eta u)],
\end{equation*}
where $\mathbb{B}$ denotes the unit ball and $u$ is uniformly distributed over $\mathbb{B}$. We recall some basic properties of randomized smoothing in the following lemma, where we denote the surface of $\mathbb{B}$ by $\mathbb{S}$ and the Minkowski sum of $X$ and $\eta \mathbb{B}$ by $X_{\eta} \triangleq X+\eta \mathbb{B}$.

\begin{lemma}[\mbox{\cite[Lemma 2.4]{marrinan-shanbhag-yousefian-2026}}]\label{RS-property-grad}
    Consider $f: X\subseteq \mathbb{R}^{n}\to \mathbb{R}$ and its randomized smoothing $f^{\eta}$, where $\eta>0$. Then the following hold.\\
    \emph{(i)} $f^{\eta}$ is $C^{1}$ over $X$ and $\nabla_{x} f^{\eta}(x) = (\tfrac{n}{2\eta})\mathbb{E}_{v\in \eta\mathbb{S}}[ (f(x+v)-f(x-v))\tfrac{v}{\|v\|}]$ holds for all $x\in X$.

    \noindent Suppose $f$ is $L_{0}$-Lipschitz continuous on $X_{\eta}$. For any $x, y\in X$, \emph{(ii)-(v)} hold.

    \noindent\emph{(ii)} $|f^{\eta}(x)-f^{\eta}(y)|\leq L_{0}\|x-y\|$. \emph{(iii)} $|f^{\eta}(x)-f(x)|\leq L_{0}\eta$.
    
    \noindent\emph{(iv)} $\|\nabla_{x}f^{\eta}(x)-\nabla_{x}f^{\eta}(y)\|\leq \tfrac{L_{0}\sqrt{n}}{\eta}\|x-y\|$.

    \noindent\emph{(v)} $f$ is $L_{1}$-smooth on $X_{\eta} \implies  \forall x \in X_{\eta}$, $\| \nabla_{x}f^{\eta}(x) - \nabla_{x}f(x) \|\leq \eta L_{1} n$.

    \noindent\emph{(vi)} For any $x\in X$, we have $\mathbb{E}_{\mathbf{v}\in \eta\mathbb{S}}[\|g(x, \mathbf{v})\|^{2}]\leq 16\sqrt{2\pi}L^{2}_{0}n$ where $g(x, \mathbf{v}) \triangleq \left( \frac{n(f(x+\mathbf{v})-f(x-\mathbf{v}))\mathbf{v}}{2\eta\|\mathbf{v}\|} \right)$.
\end{lemma}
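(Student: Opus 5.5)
The plan is to derive all six items from a single representation of the smoothed function as a convolution with the normalized indicator of the ball. We write
\[
f^{\eta}(x)=\frac{1}{\mathrm{vol}(\eta\mathbb{B})}\int_{\mathbb{B}(x;\eta)} f(z)\,dz .
\]

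For (i), we differentiate this integral in $x$ and apply the divergence (Stokes) theorem to the moving domain $\mathbb{B}(x;\eta)$. This turns the derivative into a surface integral over $\eta\mathbb{S}$ weighted by the outward normal $v/\|v\|$. The ratio of surface area to volume, $n/\eta$, supplies the factor $\tfrac{n}{\eta}\mathbb{E}_{v\in\eta\mathbb{S}}[f(x+v)v/\|v\|]$. Because $v$ and $-v$ have the same distribution on $\eta\mathbb{S}$ and $\mathbb{E}[v]=0$, we can subtract $f(x-v)$ and halve, which gives the stated symmetric formula. Continuity of the gradient in $x$ then follows from continuity of $f$ and dominated convergence.

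Items (ii) and (iii) are direct. For (ii), $|f^\eta(x)-f^\eta(y)|\le \mathbb{E}|f(x+\eta u)-f(y+\eta u)|\le L_0\|x-y\|$. For (iii), $|f^\eta(x)-f(x)|\le \mathbb{E}\, L_0\eta\|u\|\le L_0\eta$.

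For (iv), we start from the one-sided formula $\nabla f^\eta(x)=\tfrac{n}{\eta}\mathbb{E}_{v}[f(x+v)v/\|v\|]$. Taking the difference at $x$ and $y$ gives
\[
\|\nabla f^\eta(x)-\nabla f^\eta(y)\|\le \tfrac{n}{\eta}\sup_{\|w\|=1}\mathbb{E}\bigl[|f(x+v)-f(y+v)|\,|\langle w,v/\|v\|\rangle|\bigr]\le \tfrac{n}{\eta}L_0\|x-y\|\,\mathbb{E}|\langle w,s\rangle|,
\]
where $s$ is uniform on $\mathbb{S}$. The standard spherical estimate $\mathbb{E}|\langle w,s\rangle|\le(\mathbb{E}\langle w,s\rangle^2)^{1/2}=1/\sqrt n$ then yields the constant $L_0\sqrt n/\eta$.

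For (v), smoothness of $f$ lets us pass the gradient inside the expectation, so $\nabla f^\eta(x)=\mathbb{E}_u[\nabla f(x+\eta u)]$. Hence $\|\nabla f^\eta(x)-\nabla f(x)\|\le L_1\eta\,\mathbb{E}\|u\|\le L_1\eta\le \eta L_1 n$.

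The main obstacle is (vi), which needs a concentration-of-measure argument. The integrand satisfies $\|g\|^2=\tfrac{n^2}{4\eta^2}(f(x+v)-f(x-v))^2$, so we must show that the function $h(s)=f(x+\eta s)$ has small fluctuation on the sphere. Writing $f(x+v)-f(x-v)=(h(s)-\mathbb{E}h)-(h(-s)-\mathbb{E}h)$ and using $(a-b)^2\le 2a^2+2b^2$, the problem reduces to bounding $\mathbb{E}(h-\mathbb{E}h)^2$. The function $h$ is $L_0\eta$-Lipschitz on $\mathbb{S}$. The spherical concentration inequality $\mathbb{P}(|h-\mathbb{E}h|\ge t)\le 2\sqrt{2\pi}\,e^{-nt^2/(8L_0^2\eta^2)}$ can then be integrated via $\mathbb{E}X^2=\int_0^\infty 2t\,\mathbb{P}(|X|\ge t)\,dt$. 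This gives $\mathbb{E}(h-\mathbb{E}h)^2\le c\sqrt{2\pi}L_0^2\eta^2/n$ with a small explicit $c$. Multiplying by $4\cdot\tfrac{n^2}{4\eta^2}$ yields $16\sqrt{2\pi}L_0^2 n$, and the constants are tracked as in \cite{lin-zheng-jordon-2022}.
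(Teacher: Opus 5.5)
Your proposal is correct. It reconstructs the standard arguments behind this lemma, which the paper does not prove itself but imports from \cite[Lemma 2.4]{marrinan-shanbhag-yousefian-2026}:
\begin{itemize}
\item for (i), the moving-ball surface integral with area-to-volume ratio $n/\eta$, followed by symmetrization;
\item for (ii)--(iii), direct Lipschitz estimates;
\item for (iv), the moment bound $\mathbb{E}|\langle w,s\rangle|\le n^{-1/2}$, which gives the stated constant $L_0\sqrt{n}/\eta$ exactly;
\item for (vi), the spherical-concentration argument of \cite{lin-zheng-jordon-2022}, where integrating the tail gives $c=16$ and hence exactly $16\sqrt{2\pi}L_0^2 n$.
\end{itemize}

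The only caveat concerns (v). Your bound, like any proof of it, needs $x+\eta\mathbb{B}$ to lie where $f$ is $L_1$-smooth, so it holds for $x\in X$. The stated range $x\in X_\eta$ would require smoothness on $X_{2\eta}$, because $\nabla f^{\eta}(x)$ depends on $f$ over $x+\eta\mathbb{B}$. This is a looseness in the statement, not a gap in your argument.
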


The $\delta$-Clarke subdifferential \cite{goldstein-1977} of a Lipschitz continuous function $f: \mathbb{R}^{n}\to \mathbb{R}$ is defined as
\begin{equation}\label{delta-Clarke-subdifferential}
    \partial^{C}_{\delta} f(x) \triangleq \mbox{conv} \left\{ \zeta \;\vert\; \zeta\in \partial^{C} f(y),\; \|x-y\|\leq \delta \right\} = \mbox{conv} \left( \bigcup_{ \, \|y-x\| \le \delta} \partial^C f(y) \right).
\end{equation}
The following lemma is useful for our approximation analysis, showing the inclusion relation between the smoothed gradient and the $\delta$-Clarke subdifferential.

\begin{lemma}[\mbox{\cite[Proposition 2.5]{marrinan-shanbhag-yousefian-2026}}]\label{RS-property-inclusion}
    Consider the optimization problem $\min_{x\in X} f(x)$ where $f$ is locally Lipschitz continuous and $X \subseteq \mathbb{R}^{n}$ is closed, convex, and bounded. \emph{(i)} For any $\eta>0$ and any $x\in \mathbb{R}^{n}$, $\nabla f^{\eta}(x)\in \partial^{C}_{\eta} f(x)$. \emph{(ii)} For any $\eta>0$ and any $x\in X$, if $0\in \nabla f^{\eta}(x) + \mathcal{N}_{X}(x)$ holds, we have $0\in \partial^{C}_{\eta} f(x) + \mathcal{N}_{X}(x)$.
\end{lemma}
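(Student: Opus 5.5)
The statement to prove is Lemma \ref{RS-property-inclusion}. Part (i) is the core claim and part (ii) follows from it by a monotonicity argument on the normal-cone inclusion.

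\textbf{Plan for (i).} I will write $\nabla f^{\eta}(x)$ as an average of Clarke subgradients over $\mathbb{B}(x;\eta)$.

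First, note that $f$ is locally Lipschitz, so it is Lipschitz on the compact set $\mathbb{B}(x;\eta)$. By Rademacher's theorem it is then differentiable almost everywhere there. Wherever $\nabla f(y)$ exists, it belongs to $\partial^{C} f(y)$.

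Second, differentiate under the integral. Write $f^{\eta}(x)=\frac{1}{\mathrm{vol}(\eta\mathbb{B})}\int_{\mathbb{B}(x;\eta)} f(y)\,dy$. For each direction $d$, the difference quotients $\frac{f(x+td+\eta u)-f(x+\eta u)}{t}$ are uniformly bounded by the Lipschitz constant. They also converge for a.e.\ $u$ to $\nabla f(x+\eta u)^{\top}d$. Dominated convergence then gives
\begin{equation*}
\nabla f^{\eta}(x)=\mathbb{E}_{u\in\mathbb{B}}[\nabla f(x+\eta u)].
\end{equation*}
This matches Lemma \ref{RS-property-grad}(i) via the divergence theorem, but the form above is the useful one here.

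Third, conclude membership. The integrand lies almost surely in the set $S\triangleq\bigcup_{\|y-x\|\le\eta}\partial^{C}f(y)$. This set is bounded by the Lipschitz constant, and it is closed because the Clarke subdifferential mapping is outer semicontinuous with compact values on a compact set. Hence $\mathrm{conv}(S)=\partial^{C}_{\eta}f(x)$ is compact and convex.

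The expectation of a random vector supported in a compact convex set $K$ lies in $K$. To see this, suppose the mean lay outside $K$. A separating hyperplane would then give a linear functional $a$ with $a^{\top}\mathbb{E}[\cdot]>\sup_{K}a^{\top}z$, which contradicts $a^{\top}\mathbb{E}[\cdot]\le\sup_K a^{\top}z$. Therefore $\nabla f^{\eta}(x)\in\partial^{C}_{\eta}f(x)$.

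The main technical care lies in the second step. There one must justify the interchange of gradient and expectation and the a.e.\ identification of the integrand. A fallback is to use the Clarke mean-value and support-function characterization instead. For every $d$ one has $\nabla f^{\eta}(x)^{\top}d=\mathbb{E}[f^{\circ}(x+\eta u;d)]$-type bounds, more precisely $\le \mathbb{E}[f^{\circ}(x+\eta u;d)]\le \max_{\zeta\in\partial^{C}_{\eta}f(x)}\zeta^{\top}d$, and a support-function comparison then yields the inclusion.

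\textbf{Plan for (ii).} Suppose $0\in\nabla f^{\eta}(x)+\mathcal{N}_{X}(x)$. Then there is $w\in\mathcal{N}_X(x)$ with $w=-\nabla f^{\eta}(x)$. By (i), $\nabla f^{\eta}(x)\in\partial^{C}_{\eta}f(x)$, so $0=\nabla f^{\eta}(x)+w\in\partial^{C}_{\eta}f(x)+\mathcal{N}_X(x)$. This is immediate. Convexity and closedness of $X$ ensure that the Fréchet normal cone coincides with the convex normal cone, but that fact is not needed for the set inclusion itself.
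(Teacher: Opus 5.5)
Your argument is correct, and (ii) follows immediately from (i) as you say. For (i) you use Rademacher and dominated convergence to get $\nabla f^{\eta}(x)=\mathbb{E}_{u}[\nabla f(x+\eta u)]$, then $\nabla f(x+\eta u)\in\partial^{C}f(x+\eta u)\subseteq\partial^{C}_{\eta}f(x)$ almost surely, and close with compactness and convexity of the Goldstein set; this is the standard proof behind the cited result, since the paper itself gives no proof and imports the lemma from \cite{marrinan-shanbhag-yousefian-2026}. One detail to tidy: the domination bound needs a Lipschitz constant on a slightly larger ball, such as $\mathbb{B}(x;\eta+\|d\|)$, not just on $\mathbb{B}(x;\eta)$, and local Lipschitz continuity supplies this.
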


\subsection{Clarke-Nash Equilibria and Generalized Variational Inequalities}\label{Sec-2.2}

Inspired by Clarke stationarity \cite[Definition 6.1.4]{cui-pang-2021}, Clarke-Nash equilibrium (CNE) were introduced. We recall the definition of CNE below.

\begin{definition}[\mbox{\cite[Definition 11.1.1]{cui-pang-2021}}]\label{CNE-def}
    Consider the $N$-player game \eqref{games} where for any $i \in [N]$, the $i$th player-specific function $f_{i}(\bullet, x_{-i})$ is Lipschitz continuous for given $x_{-i}$. We say $x^{*} = (x^{*}_{i})_{i=1}^{N}$ is a Clarke-Nash equilibrium \abbr{CNE} if for any $i\in [N]$, $x^{*}_{i}$ is a Clarke stationary point, given $x^{*}_{-i}$, i.e.,
    \begin{equation}
        0\in \partial^{C}_{x_{i}} f_{i}(x^{*}_{i}, x^{*}_{-i}) + \mathcal{N}_{X_{i}}(x^{*}_{i}) \tag{CNE},
    \end{equation}
    where $\partial^{C}_{x_{i}} f_{i}(x^{*}_{i}, x^{*}_{-i})$ is the Clarke subdifferential of $f_{i}(\bullet, x^{*}_{-i})$ at $x^{*}_{i}$.
\end{definition}

By the Clarke regularity \cite[Definition 4.3.4]{cui-pang-2021} of convex functions, the CNE coincides with the standard NE in convex games. If for any $i\in [N]$, $f_{i}(\bullet, x_{-i})$ is continuously differentiable on an open set $\mathcal{O}_{i}$ such that $X_{i}\subseteq \mathcal{O}_{i}$ for given $x_{-i}$, the CNE reduces to
\begin{equation*}
    \nabla_{x_{i}} f_{i}(x^{\ast}_{i}, x^{\ast}_{-i})^{\top}(x_{i}-x^{\ast}_{i})\geq 0,~ \forall x_{i}\in X_{i},
\end{equation*}
which can be captured by the solution set of $\mathrm{SVI}\:(X, F)$, where $X \triangleq \prod_{i=1}^{N}X_{i}$ and $F(x) \triangleq (\nabla_{x_{i}}f_{i}(x))_{i=1}^{N}$. The equilibria existence under smoothness from the SVI perspective is established in \cite{ravat-shanbhag-2011}. 

Next we consider a generalization of variational inequalities, namely the generalized variational inequality \cite{fang-peterson-1982}, denoted by $\mathrm{GVI}\:(X, \mathbf{F})$, where $\mathbf{F}: X \rightrightarrows \mathbb{R}^n$ is a set-valued map. The problem consists of finding a pair $(x^{*}, y^{*})$ such that $x^{*}\in X$, $y^{*}\in \mathbf{F}(x^{*})$ and
\begin{equation*}
    (y^{*})^{\top}(x-x^{*})\geq 0, ~\forall x\in X.
\end{equation*}
By Definition \ref{CNE-def}, we know that the solution set of $\mathrm{GVI}\:(X,\mathbf{F})$ characterizes CNE. Indeed, we may take $\mathbf{F}(x)\triangleq (\partial^{C}_{x_i} f_i(x_i,x_{-i}))_{i=1}^{N}$. The following proposition establishes the CNE existence from the perspective of $\mathrm{GVI}\:(X,\mathbf{F})$, under the upper semicontinuity \cite[Definition 1.5.1]{cui-pang-2021} assumption on $\mathbf{F}$.

\begin{proposition}\label{QNE-CNE-existence}
    Consider the $N$-player game \eqref{games}. Let each $X_{i}$ be a compact convex set. If each function $f_{i}(\bullet, x_{-i})$ is Lipschitz continuous for $i\in [N]$, then a CNE exists.
\end{proposition}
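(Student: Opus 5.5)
The plan is to reduce existence of a CNE to existence of a solution of $\mathrm{GVI}\:(X,\mathbf{F})$ with $\mathbf{F}(x)\triangleq \prod_{i=1}^{N}\partial^{C}_{x_i} f_i(x_i,x_{-i})$. By Definition \ref{CNE-def}, $x^*$ is a CNE if and only if there is $y^*\in\mathbf{F}(x^*)$ with $(y^*)^\top(x-x^*)\ge 0$ for all $x\in X$. This equivalence holds because $X=\prod_i X_i$ is a product, so the inequality splits componentwise into $-y_i^*\in\mathcal{N}_{X_i}(x_i^*)$. I would then obtain a GVI solution from Kakutani's fixed point theorem.

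\textbf{Step 1 (properties of $\mathbf{F}$).} Let $L_i$ be the Lipschitz constant of $f_i(\bullet,x_{-i})$. I will use it uniformly in $x_{-i}\in X_{-i}$; this is implicit in the hypothesis, and on compact $X$ it is what is needed. Standard Clarke calculus then gives that each $\partial^{C}_{x_i} f_i(x)$ is nonempty, convex, compact and contained in $L_i\mathbb{B}$. Hence $\mathbf{F}(x)$ is nonempty, convex, compact and contained in the compact convex set $K\triangleq\prod_i L_i\mathbb{B}$.

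Next I need upper semicontinuity of $\mathbf{F}$ on $X$. Since the values lie in the compact set $K$, it suffices to show that the graph of $\mathbf{F}$ is closed. Take $x^k\to x$ and $y^k_i\in\partial^C_{x_i}f_i(x^k)$ with $y^k_i\to y_i$. Using the generalized directional derivative, $(y_i^k)^\top d\le f_i^\circ(x^k;d)$ for every $d$. Passing to the limit via upper semicontinuity of $f^\circ$ in the base point gives $y_i^\top d\le f_i^\circ(x;d)$, so $y_i\in\partial^C_{x_i}f_i(x)$.

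This step is the main obstacle. Upper semicontinuity of the \emph{partial} Clarke directional derivative jointly in $(x_i,x_{-i})$ needs some joint regularity of $f_i$, such as joint continuity plus the uniform Lipschitz bound, or joint local Lipschitz continuity. My first attempt would be to argue from the $\limsup$ definition of $f_i^\circ$ with joint continuity. Failing that, I would simply invoke the upper semicontinuity assumption on $\mathbf{F}$ announced in the text preceding the proposition.

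\textbf{Step 2 (Kakutani).} On the nonempty compact convex set $X\times K$, define the set-valued map
\[
\Phi(x,y)\triangleq \operatorname*{argmin}_{z\in X} y^\top z \;\times\; \mathbf{F}(x).
\]
The first factor has nonempty convex compact values, because it minimizes a linear function over the compact convex set $X$. By Berge's maximum theorem it is upper semicontinuous in $y$, hence has closed graph. The second factor has nonempty convex compact values and closed graph by Step 1. So $\Phi$ has closed graph and nonempty convex values in $X\times K$, and Kakutani's theorem yields a fixed point $(x^*,y^*)$.

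\textbf{Step 3 (conclusion).} The fixed point satisfies $y^*\in\mathbf{F}(x^*)$ and $x^*\in\operatorname*{argmin}_{z\in X}(y^*)^\top z$. The latter means $(y^*)^\top(x-x^*)\ge 0$ for all $x\in X$, so $(x^*,y^*)$ solves $\mathrm{GVI}\:(X,\mathbf{F})$. Taking $x=(x_i,x^*_{-i})$ gives $(y_i^*)^\top(x_i-x_i^*)\ge 0$ for all $x_i\in X_i$, i.e.\ $0\in y_i^*+\mathcal{N}_{X_i}(x_i^*)$ with $y_i^*\in\partial^C_{x_i}f_i(x^*)$. Hence $x^*$ is a CNE.
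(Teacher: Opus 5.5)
Your Steps 2--3 are the Kakutani argument behind the GVI existence result the paper invokes (\cite[Proposition 11.2.2]{cui-pang-2021} with $\mathbf{F}(x)=(\partial^{C}_{x_i}f_i(x))_{i=1}^N$). So your route is the paper's, and those steps are fine given Step~1. The gap is Step~1: neither repair you propose works. Joint regularity of $f_i$ does not make $x\mapsto\partial^{C}_{x_i}f_i(x)$ closed. Take $f(x_1,x_2)=\min\{|x_1|,|x_2|\}$. It is globally $1$-Lipschitz, hence jointly continuous with a uniform Lipschitz bound in $x_1$. Yet $\partial^{C}_{x_1}f(\epsilon/2,\epsilon)=\{1\}$ for every $\epsilon>0$, whereas $f(\bullet,0)\equiv 0$ gives $\partial^{C}_{x_1}f(0,0)=\{0\}$. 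The reason is that the partial generalized directional derivative takes its $\limsup$ only over $x_i$ with $x_{-i}$ frozen, so joint regularity does not make it upper semicontinuous in $x_{-i}$. Joint local Lipschitz continuity only yields $\partial^{C}_{x_i}f(x)\subseteq\pi_i\,\partial^{C}f(x)$, the projection of the joint Clarke subdifferential, and it is this larger map that is upper semicontinuous (here $\pi_1\partial^{C}f(0,0)=[-1,1]$). Running your Kakutani step with $x\mapsto\prod_i\pi_i\partial^{C}f_i(x)$ would only give $0\in\pi_i\partial^{C}f_i(x^*)+\mathcal{N}_{X_i}(x^*_i)$, which is weaker than a CNE.

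In fact no argument can derive Step~1 from the hypotheses as literally stated, because the conclusion then fails. On $X_1=X_2=[0,1]$ let $f_1(x)=x_1g(x_2)$ and $f_2(x)=x_2h(x_1)$, with $g=1$ on $[0,\tfrac12)$, $g=-1$ on $[\tfrac12,1]$, $h=-1$ on $[0,\tfrac12)$, and $h=1$ on $[\tfrac12,1]$. Each $f_i(\bullet,x_{-i})$ is linear with Lipschitz constant $1$. Clarke stationarity forces $x_1=0$ if $x_2<\tfrac12$ and $x_1=1$ otherwise, and $x_2=1$ if $x_1<\tfrac12$ and $x_2=0$ otherwise. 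No pair is consistent with both, so no CNE exists. Your fallback is therefore not optional: upper semicontinuity (closed graph) of $\mathbf{F}$ on $X$ must be added as an explicit hypothesis, as the sentence preceding the proposition announces. With it, your proof is complete. The paper's justification of this property via \cite[Proposition 4.3.1]{cui-pang-2021} does not supply it either. That result concerns the Clarke subdifferential of a single locally Lipschitz function, and applied to $f_i(\bullet,x_{-i})$ it gives upper semicontinuity in $x_i$ only.
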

\begin{proof}
    The proof proceeds similarly to \cite[Proposition 11.2.2]{cui-pang-2021} by considering the set-valued map $\mathbf{F}(x)\triangleq (\partial^{C}_{x_i} f_i(x_i,x_{-i}))_{i=1}^N$. The required upper semicontinuity in \cite[Proposition 11.2.2]{cui-pang-2021} of Clarke subdifferentials follows from \cite[Proposition 4.3.1]{cui-pang-2021} directly.
\end{proof}

\section{Stochastic Nonconvex Smooth Potential Games}\label{Sec-3}

In this section, we consider the stochastic nonconvex smooth game:
\begin{equation*}\label{smooth-games}
    \min_{x_i \in X_i} ~ f_i(x_i, x_{-i}) \triangleq \mathbb{E} \Big[ \tilde{f}_i(x_i, x_{-i}, \xi) \Big], ~ \forall i\in [N], \tag{$\mathrm{G^{S}}$}
\end{equation*}
where each $\Tilde{f}_{i}(\bullet, x_{-i}, \xi)$ is smooth for any $x_{-i}$ and $\xi$.

Recall that an $N$-player game is said to be a potential game \cite{monderer-shapley-1996} if there exists a potential function $P: X\triangleq \Pi_{i=1}^{N} X_{i} \to \mathbb{R}$ such that for any $x^{1}_{i}, x^{2}_{i}\in X_{i}$ and any $x_{-i}\in X_{-i}$, we have
\begin{equation}\label{potential-function}
    P(x^{1}_{i}, x_{-i}) - P(x^{2}_{i}, x_{-i}) = f_{i}(x^{1}_{i}, x_{-i}) - f_{i}(x^{2}_{i}, x_{-i}),~\forall i\in [N].
\end{equation}
The following proposition shows that under smoothness, a potential game can be equivalently viewed as an optimization problem. In fact, our claim is equivalent to \cite[Lemma 4.4]{monderer-shapley-1996} and \cite[Definition 3.1.1]{sandholm-2010}, which are often used as an alternative definition of potential games.

\begin{proposition}\label{potentiality-integrability}
    Suppose that the smooth game \eqref{smooth-games} admits a potential function $P$. Then $P$ is continuously differentiable and we have that $\nabla_{x} P(x) = (\nabla_{x_{i}} f_{i}(x))_{i = 1}^{N}$.
\end{proposition}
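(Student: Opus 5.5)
The plan is to read off the partial derivatives of $P$ block by block from the potential identity \eqref{potential-function}, and then upgrade partial differentiability to continuous differentiability. Smoothness of \eqref{smooth-games} means each $f_i$ is continuously differentiable on an open set containing $X$ (jointly in $x$, inherited from smoothness of $\tilde f_i$ and interchange of gradient and expectation). Fix $i\in[N]$ and $x_{-i}\in X_{-i}$. By \eqref{potential-function}, the functions $P(\bullet,x_{-i})$ and $f_i(\bullet,x_{-i})$ differ by a quantity independent of $x_i$:
\begin{equation*}
P(x_i,x_{-i}) = f_i(x_i,x_{-i}) + c_i(x_{-i}),\quad c_i(x_{-i}) \triangleq P(\bar x_i,x_{-i}) - f_i(\bar x_i,x_{-i}),
\end{equation*}
for a fixed reference point $\bar x_i\in X_i$. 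Consequently $P(\bullet,x_{-i})$ is differentiable in $x_i$, and $\nabla_{x_i}P(x)=\nabla_{x_i}f_i(x)$ for every $x\in X$. (If $X_i$ has empty interior, derivatives are taken relative to the affine hull, or one assumes, as is standard, that \eqref{potential-function} holds on the open neighborhood where the $f_i$ are defined.)

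Next, I would show that the block-partial gradients are continuous as functions of the full vector $x$. Since $x\mapsto \nabla_{x_i}f_i(x)$ is continuous jointly in $x$, each block-partial map $x\mapsto\nabla_{x_i}P(x)$ is jointly continuous. The classical result that a function with continuous partial derivatives is $C^1$ applies verbatim to blocks of coordinates. Concretely, to estimate $P(x+h)-P(x)$, write it as a telescoping sum over blocks, changing one block $h_i$ at a time. Apply the mean value theorem to each block increment along a segment, and use continuity of $\nabla_{x_i}P$ to replace the intermediate points by $x$, with error $o(\|h\|)$. This yields Fréchet differentiability with $\nabla_x P(x)=(\nabla_{x_i}f_i(x))_{i=1}^N$, and continuity of this map gives $P\in C^1$.

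The main obstacle is the joint-continuity step: the identity \eqref{potential-function} only controls $P$ along one block at a time. The telescoping-plus-mean-value argument is therefore essential, and it requires the segments used in the telescoping to stay inside the domain. Convexity of the product set $X$ guarantees this, since each intermediate point $(x_1+h_1,\dots,x_i+h_i,x_{i+1},\dots)$ lies in $X$ whenever $x,x+h\in X$. The same holds on a convex open product neighborhood if differentiability at boundary points is wanted.
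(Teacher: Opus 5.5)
Your proof is correct, and it starts where the paper's does: \eqref{potential-function} makes $P(\bullet,x_{-i})-f_i(\bullet,x_{-i})$ independent of $x_i$, which gives $\nabla_{x_i}P=\nabla_{x_i}f_i$ block by block. The two routes split at the passage to continuous differentiability.

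The paper asserts that, because each block gradient $\nabla_{x_i}P$ exists, $\nabla_xP=(\nabla_{x_i}P)_{i=1}^N$ exists and $P$ is $C^1$. That inference is invalid in general, since existence of partial derivatives does not even yield differentiability. Your telescoping/mean-value argument, driven by joint continuity of $x\mapsto\nabla_{x_i}f_i(x)$, is the block version of the classical ``continuous partials imply $C^1$'' theorem, and it closes that step properly. Your observation that the product structure of $X$ plus convexity of each $X_i$ keeps the telescoping segments inside $X$ is the right detail.

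The cost is a hypothesis stronger than the paper's literal one, which only requires $\tilde f_i(\bullet,x_{-i},\xi)$ to be smooth in the player's own block. Under that reading alone the statement fails. Take $N=2$, $X_1=X_2=[-1,1]$, and $f_1=f_2=P$ with $P(x_1,x_2)=x_1x_2/(x_1^2+x_2^2)$ and $P(0,0)=0$. This $P$ is $C^\infty$ in each variable separately but discontinuous at the origin.

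So state joint continuity of $\nabla_{x_i}f_i$ as an explicit assumption rather than calling it ``inherited''. Inheriting it would need joint smoothness of $\tilde f_i$ plus a domination condition to differentiate under $\mathbb{E}$. With that assumption your argument is complete. The paper's argument, by contrast, really only delivers the gradient identity, which is what the later analysis uses alongside the separately assumed $L$-smoothness in (A1).
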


\begin{proof}
    For any $i\in [N]$ and given $x_{-i}\in X_{-i}$, we define the function $r_{i}(\bullet; x_{-i}) \triangleq P(\bullet, x_{-i}) - f_{i}(\bullet, x_{-i})$, treating $x_{-i}$ as a parameter. By the potential game definition, we have that $r_{i}(x^{1}_{i}; x_{-i}) \equiv r_{i}(x^{2}_{i}; x_{-i})$ for any $x^{1}_{i}, x^{2}_{i}\in X_{i}$, it follows that $r_{i}(\bullet; x_{-i})$ is a constant on $X_{i}$. Then we have that (i) $r_{i}(\bullet; x_{-i})$ is continuously differentiable; and (ii) $\nabla_{x_{i}} r_{i}(x_{i}; x_{-i}) = 0$. 
    
   By (i), we know that $P(\bullet, x_{-i})$ is continuously differentiable. Indeed, since both $r_i(\bullet; x_{-i})$ and $f_i(\bullet, x_{-i})$ are continuously differentiable, their sum must be also continuously differentiable. It follows that $\nabla_{x_{i}} P(x_{i}, x_{-i})$ exists hence $\nabla_{x} P(x) = (\nabla_{x_{i}} P(x_{i}, x_{-i}))_{i=1}^{N}$ also exists, implying that $P$ is continuously diffferentiable. By (ii), we may derive that $0 = \nabla_{x_{i}} r_{i}(x_{i}; x_{-i}) = \nabla_{x_{i}} P(x_{i}, x_{-i}) - \nabla_{x_{i}} f_{i}(x_{i}, x_{-i})$ for any $i\in [N]$, implying that $\nabla_{x} P(x) = (\nabla_{x_{i}} P(x))_{i = 1}^{N} = (\nabla_{x_{i}} f_{i}(x))_{i = 1}^{N}$, as desired.
\end{proof}

\begin{remark}\em
    The above observation forms the key idea of this paper.  In contrast to existing literature, where potentiality is primarily leveraged for the developments and analyses of best-response schemes, we exploit a different implication of this property. Specifically, under smoothness and potentiality, a stochastic synchronous gradient-response scheme admits an equivalent optimization-theoretic interpretation as a stochastic gradient method for the associated constrained stochastic nonconvex smooth optimization. This intrinsic connection provides the possibility of establishing convergence and attaining optimal complexity guarantees in the game-theoretic setting.
\end{remark}

\subsection{Randomized Stochastic Gradient Scheme}\label{Sec-3.1}

We impose Assumption $\mathrm{A}$ throughout this subsection.

\vspace{5pt}

\begin{custombox}
\noindent\textbf{Assumption A.}\\
(A1) (Potentiality) The smooth game \eqref{smooth-games} admits an $L$-smooth potential function $P$ attaining its maximum $P_{\max}$ and minimum $P_{\min}$ over $X$.\\
(A2) (Unbiasedness) For any $x^{k}$, we have $\mathbb{E}[\nabla_{x_{i}}\Tilde{f}_{i}(x^{k}_{i}, x^{k}_{-i}, \xi) \mid x^{k}] = \nabla_{x_{i}}f_{i}(x^{k}_{i}, x^{k}_{-i})$ for any $i\in [N]$.\\
(A3) (Bounded second moment) For any $x^{k}$, we have $\mathbb{E}[ \|\nabla_{x_{i}}\Tilde{f}_{i}(x^{k}_{i}, x^{k}_{-i}, \xi)-\nabla_{x_{i}}f_{i}(x^{k}_{i}, x^{k}_{-i})\|^{2} \mid x^{k}]\leq \sigma^{2}$ for some $\sigma>0$ and any $i\in [N]$.
\end{custombox}

\vspace{5pt}

Inspired by Ghadimi et al. \cite{ghadimi-lan-zhang-2016}, we propose the  RSG scheme for solving the stochastic nonconvex smooth game \eqref{smooth-games}. While both \cite{ghadimi-lan-zhang-2016} and our work employ mini-batch sampling and randomized output settings, the i.i.d. samples $\{\xi^{k}_{i, l}\}_{l=1}^{S_{k}}$ at iteration $k$ here may be different for different players since we are considering an $N$-player game.

\begin{algorithm}
\caption{~RSG scheme} \label{RSG}
\begin{algorithmic}[1]
\State \textbf{Input}: starting point $x^{0}\in X$, iteration limit $T$, stepsizes $\{\gamma_{k}\}_{k\geq 0}$, batch sizes $\{S_{k}\}_{k\geq 0}$, and probability mass function $P_{R}$ supported on $\{ 1, \dots, T \}$.
\State Let $R$ be a random variable with probability mass function $P_{R}$.
\For{$k = 0, 1, \dots, R-1$}
    \For{$i = 1, \dots, N$}
        \State (i) Pick $S_{k}$ i.i.d. realizations $\{\xi^{k}_{i, l}\}_{l=1}^{S_{k}}$ of the random variable $\xi$;
        \State (ii) Set the minibatch gradient as $\Tilde{g}_{i}(x^{k}, \xi_{i}^{k}) \triangleq \frac{1}{S_{k}}\sum_{l=1}^{S_{k}} \nabla_{x_{i}}\Tilde{f}_{i}(x^{k}_{i}, x^{k}_{-i}, \xi^{k}_{i, l})$;
        \State (iii) Update $x^{k+1}_{i} = \Pi_{X_{i}} [ x^{k}_{i} - \gamma_{k}\Tilde{g}_{i}(x^{k}, \xi_{i}^{k}) ]$;
    \EndFor
\EndFor
\State \textbf{Output}: $x^{R}$ as final estimate.
\end{algorithmic}
\end{algorithm}

Before examining the RSG scheme, we first turn to the convergence measure. For the variational inequality $\mathrm{VI}\:(X, F)$ where $X \triangleq \prod_{i=1}^{N}X_{i}$ and $F(x) \triangleq (\nabla_{x_{i}}f_{i}(x))_{i=1}^{N}$, we consider the residual
\begin{equation*}
    G_{\gamma}(x) \triangleq \tfrac{1}{\gamma}(x-\Pi_{X}[x-\gamma F(x)]),~ \gamma>0.
\end{equation*}
It can be seen that $x^{\ast}$ solves $\mathrm{VI}\:(X,F)$ if and only if $G_{\gamma}(x^{*}) = 0$. In what follows, we consider two residuals at iteration $k$, defined as
\begin{align*}
    G_{\gamma_{k}}(x^{k}) &\triangleq \tfrac{1}{\gamma_{k}} (x^{k}-\Pi_{X}[x^{k}-\gamma_{k} F(x^{k})]), \\
    \Tilde{G}_{\gamma_{k}}(x^{k}, \xi^{k}) &\triangleq \tfrac{1}{\gamma_{k}} (x^{k}-\Pi_{X}[x^{k}-\gamma_{k} \Tilde{F}(x^{k}, \xi^{k})]).
\end{align*}
Now we are ready for the convergence analysis of the RSG scheme. It can seen that the line $7$ of RSG scheme can be compactly written as $x^{k+1} = \Pi_{X} [ x^{k} - \gamma_{k} \Tilde{F}(x^{k}, \xi^{k})]$, where $\Tilde{F}(x^{k}, \xi^{k}) \triangleq (\Tilde{g}_{i}(x^{k}, \xi_{i}^{k}))_{i=1}^{N}$. The proof of the following theorem is similar to that of \cite{ghadimi-lan-zhang-2016} and we defer it to Appendix \ref{proof-smooth-RSG}.

\begin{theorem}[Convergence of RSG]\label{RSG-convergence}
    Suppose that the stepsizes $\{\gamma_{k}\}_{k\geq 0}$ in the RSG scheme are chosen such that $0<\gamma_{k} \leq 1/L$ with $\gamma_{k} < 1/L$ for at least one $k$, and the probability mass function $P_{R}$ are chosen such that for any $k = 1, \dots, T$,
    \begin{equation*}
        P_{R}(k) \triangleq \emph{Prob}\{R = k\} = \frac{\gamma_{k}-L(\gamma_{k})^{2}}{\sum_{k=1}^{T}(\gamma_{k}-L(\gamma_{k})^{2})}.
    \end{equation*}
    Suppose that Assumption $\mathrm{A}$ holds. Then we have:\\
    \emph{(i)} For any $T\geq 1$, we have
    \begin{equation}\label{RSGR-thm-1}
        \mathbb{E}[\| \Tilde{G}_{\gamma_{R}}(x^{R}, \xi^{R}) \|^{2}] \leq \frac{LD^{2}+(\sigma^{2}N)\sum_{k=1}^{T}(\gamma_{k}/S_{k})}{\sum_{k=1}^{T}(\gamma_{k}-L(\gamma_{k})^{2})},
    \end{equation}
    where the expectation is taken with respect to $R$ and the history $\xi^{[k]} \triangleq (\xi^{0}, \dots, \xi^{k})$, and $D$ is given by
    \begin{equation}\label{D-definition}
        D \triangleq \left[ \frac{P_{\emph{max}} - P_{\emph{min}}}{L} \right]^{1/2}.
    \end{equation}
    \emph{(ii)} Suppose that we choose the stepsizes $\gamma_{k} = 1/(2L)$ and $P_{R}(k) = 1/T$ for all $k = 1, \dots, T$. Given a sufficiently large but fixed total number of calls $M$ to the stochastic first-order oracles \emph{(}$\mathcal{SFO}$\emph{)}. If the number of samples $S_{k}$ at iteration $k$ of the RSG algorithm is fixed as follows:
    \begin{equation}\label{S-selection}
        S_{k} = S \triangleq \left\lceil \frac{\sigma\sqrt{6M}}{4LD} \right\rceil,
    \end{equation}
    then $T\leq \lfloor M/(SN) \rfloor$ and we have that
    \begin{equation}\label{RSG-final-bound}
        \mathbb{E}[\| G_{\gamma_{R}}(x^{R}) \|^{2}] \leq \frac{16L^{2}D^{2}N}{M} + \frac{8\sqrt{6} LDN\sigma}{\sqrt{M}}.
    \end{equation}
\end{theorem}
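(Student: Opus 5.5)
By Proposition~\ref{potentiality-integrability}, $F(x)=(\nabla_{x_i}f_i(x))_{i=1}^N=\nabla P(x)$. The RSG iteration $x^{k+1}=\Pi_X[x^k-\gamma_k\tilde F(x^k,\xi^k)]$ is therefore exactly the projected stochastic gradient method of Ghadimi--Lan--Zhang applied to $\min_{x\in X}P(x)$, with the zero nonsmooth term. The plan is to reproduce their argument, using Assumption A1 for descent and A2--A3 for the noise. The only game-specific feature is that $\tilde F$ stacks $N$ independent minibatch blocks. This yields the variance bound
\[
\mathbb{E}\bigl[\|\tilde F(x^k,\xi^k)-F(x^k)\|^2\mid x^k\bigr]\le N\sigma^2/S_k,
\]
since each block is an average of $S_k$ i.i.d.\ terms with variance at most $\sigma^2$.

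\textbf{Part (i).}
\begin{enumerate}
\item Write $\delta_k=\tilde F(x^k,\xi^k)-F(x^k)$ and $\tilde G_k=\tilde G_{\gamma_k}(x^k,\xi^k)$, so $x^{k+1}=x^k-\gamma_k\tilde G_k$.
\item Use the projection inequality $\langle \tilde F,\tilde G_k\rangle\ge\|\tilde G_k\|^2$. It follows from the variational characterization of $\Pi_X$ with the point $x^k\in X$.
\item Apply $L$-smoothness of $P$:
\[
P(x^{k+1})\le P(x^k)-\gamma_k\langle F(x^k),\tilde G_k\rangle+\tfrac L2\gamma_k^2\|\tilde G_k\|^2 .
\]
\item Substitute $F=\tilde F-\delta_k$. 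This gives
\[
P(x^{k+1})\le P(x^k)-(\gamma_k-\tfrac L2\gamma_k^2)\|\tilde G_k\|^2+\gamma_k\langle\delta_k,\tilde G_k\rangle .
\]
\item Split the cross term as $\langle\delta_k,G_k\rangle+\langle\delta_k,\tilde G_k-G_k\rangle$, where $G_k=G_{\gamma_k}(x^k)$ is $x^k$-measurable. The first piece has zero conditional mean by A2. Nonexpansiveness of $\Pi_X$ gives $\|\tilde G_k-G_k\|\le\|\delta_k\|$, so the second piece is at most $\|\delta_k\|^2$.
\item Sum over $k=1,\dots,T$ and take expectations. The potential differences telescope to at most $P_{\max}-P_{\min}=LD^2$.
\end{enumerate}
The result is $\sum_k(\gamma_k-\tfrac L2\gamma_k^2)\mathbb{E}\|\tilde G_k\|^2\le LD^2+N\sigma^2\sum_k\gamma_k/S_k$. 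Since $\gamma_k-L\gamma_k^2\le\gamma_k-\tfrac L2\gamma_k^2$, dividing by $\sum_k(\gamma_k-L\gamma_k^2)>0$ and recognizing the $P_R$-weighted average gives \eqref{RSGR-thm-1}. The indexing from $1$ to $T$ versus $0$ to $T-1$ is handled by shifting the telescoping range.

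\textbf{Part (ii).}
\begin{enumerate}
\item Pass from $\tilde G$ to $G$ via $\|G_k\|^2\le2\|\tilde G_k\|^2+2\|\delta_k\|^2$, which gives $\mathbb{E}\|G_R\|^2\le2\,\mathbb{E}\|\tilde G_R\|^2+2N\sigma^2/S$.
\item With $\gamma_k=1/(2L)$, the denominator is $T/(4L)$. Part (i) then gives $\mathbb{E}\|\tilde G_R\|^2\le 4L^2D^2/T+2N\sigma^2/S$.
\item Count samples: each iteration uses $SN$ samples, so $T=\lfloor M/(SN)\rfloor\ge M/(2SN)$ for $M$ large.
\item Substituting gives $16L^2D^2SN/M+6N\sigma^2/S$.
\item Insert $S$ from \eqref{S-selection}, using $S\le\sigma\sqrt{6M}/(4LD)+1$ in the first term and $S\ge\sigma\sqrt{6M}/(4LD)$ in the second. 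This produces the $16L^2D^2N/M$ term plus $O(LDN\sigma/\sqrt M)$ terms, which combine to $8\sqrt6\,LDN\sigma/\sqrt M$.
\end{enumerate}

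\textbf{Main obstacle.} Getting the exact constants in \eqref{RSG-final-bound} will take some care: the factor-2 losses, the floor in $T$, and the ceiling in $S$ all have to be tracked. A looser constant is easy; matching $8\sqrt6$ may require a tighter route, for instance bounding $\|G_k\|^2$ directly through the descent inequality rather than through the factor-2 split. The conceptual steps are routine once $F=\nabla P$ is in hand.
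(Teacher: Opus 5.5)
Your proposal is correct and follows the paper's proof essentially step for step: the descent inequality using $F=\nabla P$, the projection inequality $\langle \tilde F,\tilde G_k\rangle\ge\|\tilde G_k\|^2$, the nonexpansiveness bound $\|\tilde G_k-G_k\|\le\|\delta_k\|$, the $N\sigma^2/S_k$ block variance bound, and, for (ii), the factor-2 split giving $8L^2D^2/T+6N\sigma^2/S$ together with $T\ge M/(2SN)$. Your concern about the constant is unfounded: with $S\le \sigma\sqrt{6M}/(4LD)+1$ in the first term and $S\ge\sigma\sqrt{6M}/(4LD)$ in the second, each $\sqrt{M}$-term contributes exactly $4\sqrt6\,LDN\sigma/\sqrt M$, so they sum to the stated $8\sqrt6$ and no tighter route is needed.
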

\begin{proof}
    See Appendix \ref{proof-smooth-RSG}.
\end{proof}

\begin{remark}\em
    Several points deserve emphasis. First, it follows from \eqref{RSG-final-bound} that an $x_R$ satisfying $\mathbb{E}[\|G_{\gamma_R}(x^R)\|]\le \epsilon$ can be obtained within $\mathcal{O}(\epsilon^{-2})$ iterations and $\mathcal{O}(N^2\epsilon^{-4})$ sample evaluations. Unlike \cite{ghadimi-lan-zhang-2016}, our complexity bound must account for the number of players $N$, leading to a modified sample complexity estimate. Second, although our analysis relies on a constant batch size, adaptive batch-size strategies can be incorporated in implementation, see \cite{ghadimi-lan-zhang-2016} for further discussion. Third, RSG requires an estimate of $L$ to determine the quantities $P_R(k)$, $\gamma_k$, and $S_k$. This requirement may introduce additional difficulties in stochastic regime.
\end{remark}

One exciting result is that, when the potential function $P$ is smooth and pseudoconvex on $X$, i.e., $\nabla P(x)^{\top}(y-x) \geq 0 \implies P(y) \geq P(x)$ for any $x, y \in X$, a QNE is indeed an NE, and thus RSG converges to an NE despite the nonconvexity of the game. We formalize it in the following theorem and its proof follows from \cite[Proposition 3]{xiao-shanbhag-2025-gr} immediately.

\begin{corollary}
    Consider the smooth game \eqref{smooth-games} with a potential function $P$ that is pseudoconvex and $C^{1}$ on an open set $\mathcal{O}\supseteq X$. Then the RSG scheme converges to an NE in expectation.
\end{corollary}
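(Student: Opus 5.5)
The argument has two parts. First, Theorem \ref{RSG-convergence}(ii) gives that the expected squared residual vanishes as the oracle budget grows. Second, under pseudoconvexity of $P$, every zero of the residual is a Nash equilibrium.

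\emph{Step 1 (vanishing residual).} Under Assumption A, with $\gamma_k = 1/(2L)$, $P_R(k)=1/T$ and $S$ chosen as in \eqref{S-selection}, the bound \eqref{RSG-final-bound} yields $\mathbb{E}[\|G_{1/(2L)}(x^R)\|^2] = \mathcal{O}(N/\sqrt{M}) \to 0$ as $M\to\infty$. Since the stepsize is constant, the residual map $G_{1/(2L)}$ is fixed and continuous on the compact set $X$, because $F=\nabla P$ is continuous by Proposition \ref{potentiality-integrability}. Hence $\mathrm{dist}(x^R, \mathcal{Z})$ tends to zero in probability, where $\mathcal{Z}\triangleq\{x\in X : G_{1/(2L)}(x)=0\}$. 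To see this, argue by contradiction using compactness: the infimum of $\|G\|$ over $\{x\in X:\mathrm{dist}(x,\mathcal{Z})\ge\delta\}$ is strictly positive. By the same token, every accumulation point of the outputs lies in $\mathcal{Z}$, which is nonempty by compactness. This fixes what ``converges in expectation'' means: the expected residual tends to zero, and limit points are zeros of the residual.

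\emph{Step 2 (zeros of the residual are NE).} Let $x^*\in\mathcal{Z}$. Then $x^*=\Pi_X[x^*-\gamma\nabla P(x^*)]$, i.e., $\nabla P(x^*)^\top(y-x^*)\ge 0$ for all $y\in X$, using $F=\nabla P$ from Proposition \ref{potentiality-integrability}. Fix $i$ and $y_i\in X_i$, and take $y=(y_i,x^*_{-i})\in X$. The product structure of $X$ makes this admissible. We obtain $\nabla P(x^*)^\top(y-x^*)=\nabla_{x_i}P(x^*)^\top(y_i-x_i^*)\ge 0$. Pseudoconvexity of $P$ then gives $P(y_i,x^*_{-i})\ge P(x^*)$. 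By the potential identity \eqref{potential-function}, this is exactly $f_i(y_i,x^*_{-i})\ge f_i(x^*_i,x^*_{-i})$. Thus $x^*$ is an NE. In fact Step 2 shows more: $x^*$ globally minimizes $P$ over $X$, since pseudoconvexity applies to arbitrary $y\in X$. This is the content of \cite[Proposition 3]{xiao-shanbhag-2025-gr}, and it could be cited directly.

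\emph{Main obstacle.} The delicate point is Step 1's passage from ``expected residual small'' to ``convergence to an NE'', because $\mathcal{Z}$ need not be a singleton. I would state the result as convergence to the NE set: $\mathbb{E}[\|G(x^R)\|]\to 0$ and $\mathrm{dist}(x^R,\mathcal{Z})\to 0$ in probability. Since $P$ is constant on $\mathcal{Z}$, equal to $\min_X P$, a cleaner quantitative version is available. I would try to show $\mathbb{E}[P(x^R)]-P_{\min}\to 0$ by continuity of $P$ together with dominated convergence on the compact set $X$.
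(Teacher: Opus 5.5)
Your proposal is correct and follows the paper's route. Theorem~\ref{RSG-convergence}(ii) drives $\mathbb{E}[\|G_{\gamma_R}(x^R)\|^2]$ to zero, and pseudoconvexity of $P$, with $F=\nabla P$ and the product structure of $X$, shows that every zero of the residual is an NE, which is exactly the Proposition~3 of Xiao--Shanbhag that the paper cites (you just prove it inline).

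One caveat concerns your strengthening $\mathrm{dist}(x^R,\mathcal{Z})\to 0$ in probability and the function-value-gap argument built on it. Both use compactness of $X$, which neither the corollary nor Assumption A provides, since (A1) only asks that $P$ attain its extrema over $X$. Either add compactness as a hypothesis or restrict the claim to the vanishing expected residual, which is all the paper asserts.
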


\subsection{Biased Randomized Stochastic Gradient Scheme}\label{Sec-3.2}

The results we have established in subsection \ref{Sec-3.1} are based on crucial unbiased assumption $\mathrm{(A2)}$. However, in some classes of contemporary stochastic optimization problems, such as distributionally robust optimization \cite{kuhn-shafiee-wiesemann-2025} and stochastic bilevel optimization \cite{chen-sun-yin-2021, hong-wai-wang-yang-2023}, obtaining unbiased gradient estimators remains challenging. We will illustrate this fact in Section \ref{Sec-5} through a class of stochastic hierarchical games. Motivated by these challenges, we consider a biased extension of RSG. We impose Assumption $\mathrm{B}$ throughout this subsection.

\vspace{5pt}

\begin{custombox}
\noindent\textbf{Assumption B.}\\
(B1) (Potentiality) The smooth game \eqref{smooth-games} admits an $L$-smooth potential function $P$ attaining its maximum $P_{\max}$ and minimum $P_{\min}$ over $X$. \\
(B2) (Biasedness) For any $x^{k}$, we have $\|\mathbb{E}[ \nabla_{x_{i}}\Tilde{f}_{i}(x^{k}_{i}, x^{k}_{-i}, \xi)-\nabla_{x_{i}}f_{i}(x^{k}_{i}, x^{k}_{-i}) \mid x^{k}]\|\leq \mu_{k}$ for some $\mu_{k}>0$ and any $i\in [N]$.\\
(B3) (Bounded second moment) For any $x^{k}$, we have $\mathbb{E}[ \|\nabla_{x_{i}}\Tilde{f}_{i}(x^{k}_{i}, x^{k}_{-i}, \xi)-\nabla_{x_{i}}f_{i}(x^{k}_{i}, x^{k}_{-i})\|^{2} \mid x^{k}]\leq \sigma_{k}^{2}$ for some $\sigma_{k}>0$ and any $i\in [N]$.
\end{custombox}

\vspace{5pt}

Now we are ready to establish the convergence of the biased RSG (b-RSG) scheme. Note that the stepsizes requirement is slightly different from that in Theorem \ref{RSG-convergence}.

\begin{theorem}[Convergence of biased RSG]\label{biased-RSG-convergence}
    Consider the RSG scheme with biased gradient estimators. Suppose that the stepsizes $\{\gamma_{k}\}_{k\geq 0}$ are chosen such that $0 < \gamma_{k} \leq 1/(2L)$ for all $k\geq 0$, and the probability mass function $P_{R}$ are chosen such that for any $k = 1, \dots, T$,
    \begin{equation*}
        P_{R}(k) \triangleq \emph{Prob}\{R = k\} = \frac{\gamma_{k}-L(\gamma_{k})^{2}}{\sum_{k=1}^{T}(\gamma_{k}-L(\gamma_{k})^{2})}.
    \end{equation*}
    Suppose that Assumption $\mathrm{B}$ holds. Then we have:\\
    \emph{(i)} For any $T\geq 1$ and any $c > 0$, we have
    \begin{equation*}
        \mathbb{E}[\| \Tilde{G}_{\gamma_{R}}(x^{R}, \xi^{R}) \|^{2}] \leq \frac{2LD^{2}+(2+c^{2})N\sum_{k=1}^{T}\gamma_{k}((\sigma^{2}_{k}+(S_{k}-1)\mu^{2}_{k})/S_{k})}{2\sum_{k=1}^{T}(\gamma_{k}-L(\gamma_{k})^{2})} + \frac{1}{c^{2}}\mathbb{E}[\| G_{\gamma_{R}}(x^{R}) \|^{2}],
    \end{equation*}
    where $D$ is defined in \eqref{D-definition}.\\
    \emph{(ii)} Suppose that we choose the stepsizes $\gamma_{k} = 1/(2L)$ and $P_{R}(k) = 1/T$ for all $k = 1, \dots, T$. Assume that $\sigma_{k} \leq \sigma$ for some $\sigma>0$ for all $k$. Given a sufficiently large but fixed total number of calls $M$ to the stochastic first-order oracles \emph{(}$\mathcal{SFO}$\emph{)}, if the number of samples $S_{k}$ at iteration $k$ is defined in \eqref{S-selection}, then we have
    \begin{equation}\label{biased-RSG-final-bound-0}
        \mathbb{E}[\| G_{\gamma_{R}}(x^{R}) \|^{2}] \leq \frac{16L^{2}D^{2}N}{(1-2/c^{2})M} + \frac{\tfrac{4}{3}\sqrt{6}(9+2c^{2})L^{2}D^{2}N\sigma + \sqrt{6}(3+c^{2})N^{2}\sigma\sum_{k=1}^{T}\mu^{2}_{k}}{(1-2/c^{2})LD\sqrt{M}}
    \end{equation}
    for any $c>\sqrt{2}$. In particular, if we take $c = \sqrt{3}$, we have
    \begin{equation}\label{biased-RSG-final-bound}
        \mathbb{E}[\| G_{\gamma_{R}}(x^{R}) \|^{2}] \leq \frac{48L^{2}D^{2}N}{M} + \frac{60\sqrt{6}L^{2}D^{2}N\sigma + 18\sqrt{6}N^{2}\sigma\sum_{k=1}^{T}\mu^{2}_{k}}{LD\sqrt{M}}.
    \end{equation}
\end{theorem}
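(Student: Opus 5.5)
The plan is to follow the projected stochastic gradient analysis of Ghadimi, Lan and Zhang for constrained nonconvex problems. Via Proposition \ref{potentiality-integrability}, the iteration is projected SGD on the potential $P$, since $F=\nabla P$. The one new ingredient is that the oracle error $\delta^{k}\triangleq \Tilde{F}(x^{k},\xi^{k})-F(x^{k})$ now has a nonzero conditional mean, of norm at most $\sqrt{N}\mu_{k}$ by (B2).

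The first step is the one-step descent inequality. Write $x^{k+1}=x^{k}-\gamma_{k}\Tilde{G}_{\gamma_{k}}(x^{k},\xi^{k})$. Apply $L$-smoothness of $P$ together with the projection property $\Tilde{F}(x^{k},\xi^{k})^{\top}\Tilde{G}_{\gamma_{k}}\ge \|\Tilde{G}_{\gamma_{k}}\|^{2}$. This gives
\[
P(x^{k+1})\le P(x^{k})-(\gamma_{k}-\tfrac{L}{2}\gamma_{k}^{2})\|\Tilde{G}_{\gamma_{k}}\|^{2}+\gamma_{k}(\delta^{k})^{\top}\Tilde{G}_{\gamma_{k}}.
\]
The cross term is split as $(\delta^{k})^{\top}G_{\gamma_{k}}(x^{k})+(\delta^{k})^{\top}(\Tilde{G}_{\gamma_{k}}-G_{\gamma_{k}})$. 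Nonexpansiveness of the projection gives $\|\Tilde{G}_{\gamma_{k}}-G_{\gamma_{k}}\|\le\|\delta^{k}\|$, so the second piece is at most $\|\delta^{k}\|^{2}$.

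The first piece is where the bias enters, and I expect it to be the main obstacle. In the unbiased case it vanishes in conditional expectation; here it does not. I would bound it by Young's inequality, $(\delta^{k})^{\top}G_{\gamma_{k}}\le \tfrac{c^{2}}{2}\|\delta^{k}\|^{2}+\tfrac{1}{2c^{2}}\|G_{\gamma_{k}}(x^{k})\|^{2}$, which is the source of the free parameter $c$ and of the trailing term $\tfrac{1}{c^{2}}\mathbb{E}\|G_{\gamma_{R}}\|^{2}$. For the remaining factor I would use the minibatch computation. With $S_{k}$ i.i.d. samples, conditional mean error $b_{i}$ (where $\|b_{i}\|\le\mu_{k}$) and conditional second moment at most $\sigma_{k}^{2}$, we get $\mathbb{E}\|\tfrac{1}{S_{k}}\sum_{l}(\cdot)\|^{2}=\tfrac{1}{S_{k}}\mathbb{E}\|\cdot\|^{2}+\tfrac{S_{k}-1}{S_{k}}\|b_{i}\|^{2}\le(\sigma_{k}^{2}+(S_{k}-1)\mu_{k}^{2})/S_{k}$. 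Summing over the $N$ players then gives $\mathbb{E}\|\delta^{k}\|^{2}\le N(\sigma_{k}^{2}+(S_{k}-1)\mu_{k}^{2})/S_{k}$. The constants must be arranged to produce exactly $(2+c^{2})/2$. This should follow from weighting the Young split appropriately and using $\gamma_{k}\le 1/(2L)$ to absorb a factor into $\gamma_{k}-L\gamma_{k}^{2}$.

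Next I would take total expectation and sum over $k=1,\dots,T$. Telescoping with $P_{\max}-P_{\min}=LD^{2}$ and dividing by $\sum_{k}(\gamma_{k}-L\gamma_{k}^{2})$ turns the left side into $\mathbb{E}\|\Tilde{G}_{\gamma_{R}}\|^{2}$ through the choice of $P_{R}$. The $G$ terms combine into $\tfrac{1}{c^{2}}\mathbb{E}\|G_{\gamma_{R}}\|^{2}$, using $\gamma_{k}/(\gamma_{k}-L\gamma_{k}^{2})\le 2$ and the weighting. This yields (i).

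For (ii) I would use $\|G_{\gamma}\|^{2}\le 2\|\Tilde{G}_{\gamma}\|^{2}+2\|\delta\|^{2}$ and insert (i). The $\mathbb{E}\|G_{\gamma_{R}}\|^{2}$ term is moved to the left, producing the factor $(1-2/c^{2})$ and requiring $c>\sqrt2$. I then specialize to $\gamma_{k}=1/(2L)$ with $\sum_{k}(\gamma_{k}-L\gamma_{k}^{2})=T/(4L)$, bound $(S-1)/S\le1$ and $\sum_{k}\mu_{k}^{2}$ directly, and substitute $S$ from \eqref{S-selection} and $T\ge M/(2SN)$ as in Theorem \ref{RSG-convergence}. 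Collecting terms gives \eqref{biased-RSG-final-bound-0}, and $c=\sqrt3$ gives \eqref{biased-RSG-final-bound}.
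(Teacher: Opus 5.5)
Your argument follows the paper's proof. It uses the descent step from $L$-smoothness of $P$ with the projection inequality, splits $\gamma_k(\delta^k)^{\top}\tilde{G}_{\gamma_k}$ via nonexpansiveness, and applies Young's inequality with parameter $c$. It then uses the minibatch bound $\mathbb{E}\|\delta^k\|^2\le N(\sigma_k^2+(S_k-1)\mu_k^2)/S_k$, and in (ii) the inequality $\|G\|^2\le 2\|\tilde{G}\|^2+2\|\delta\|^2$ with the $\tfrac{2}{c^2}$ term moved to the left. In (i) no reweighting is needed. The coefficient $(2+c^2)/2$ is just $\tfrac{c^2}{2}$ from Young plus the $1$ from the nonexpansiveness piece. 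The condition $\gamma_k\le 1/(2L)$ is used only on the $G$-term, via $\tfrac{\gamma_k}{2c^2}\le\tfrac{1}{c^2}(\gamma_k-L\gamma_k^2)$.

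One step of (ii), as you describe it, would not yield \eqref{biased-RSG-final-bound-0}. Suppose you bound $(S-1)/S\le 1$. The bias part then becomes $\tfrac{(6+2c^2)N}{T}\sum_k\mu_k^2\le\tfrac{(12+4c^2)N^2 S}{M}\sum_k\mu_k^2$. The ceiling $S\le 1+\sigma\sqrt{6M}/(4LD)$ then leaves an extra term $\tfrac{(12+4c^2)N^2}{M}\sum_{k=1}^{T}\mu_k^2$. That term is absent from the stated bound and is not dominated by the remaining slack in general (e.g.\ when $\mu_k=\sigma$ for all $k$, since $T=\lfloor M/(SN)\rfloor$).

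The fix is to keep the factor $S-1$. The estimate $\tfrac{(6+2c^2)N(S-1)}{ST}\sum_k\mu_k^2\le\tfrac{(12+4c^2)N^2(S-1)}{M}\sum_k\mu_k^2$, combined with $S-1\le\sigma\sqrt{6M}/(4LD)$, gives exactly $\sqrt{6}(3+c^2)N^2\sigma\sum_k\mu_k^2/(LD\sqrt{M})$. This is what the paper's rewriting $\sigma_k^2+(S-1)\mu_k^2=(\sigma_k^2-\mu_k^2)+S\mu_k^2$ does: the $-\mu_k^2$ cancels the $+1$ from the ceiling.

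With that change, bound the variance part directly by $(6+2c^2)N\sigma^2/S$, as in Theorem \ref{RSG-convergence}. This gives a $\sigma$-coefficient of $\tfrac{4}{3}\sqrt{6}(6+c^2)\le\tfrac{4}{3}\sqrt{6}(9+2c^2)$, so your bound then implies the stated one. The paper's larger constant comes from routing $\sum_k\sigma^2$ through $1/T\le 2SN/M$ and back via $T\le M/(SN)$.
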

\begin{proof}
    (i) Let $\delta^{k} = \Tilde{F}(x^{k}, \xi^{k}) - F(x^{k})$ for any $k\geq 0$ with $i$th component $\delta^{k}_{i} = \Tilde{g}_{i}(x^{k}, \xi_{i}^{k}) - \nabla_{x_{i}}f_{i}(x^{k})$. Similar as the proof of Theorem \ref{RSG-convergence}, we may arrive
    \begin{align}\label{biased-RSG-eqn1}
        \sum_{k=1}^{T} (\gamma_{k}-L(\gamma_{k})^{2}) \| \Tilde{G}_{\gamma_{k}}(x^{k}, \xi^{k}) \|^{2}
        &\leq P_{\text{max}} - P_{\text{min}} + \sum_{k=1}^{T} \{ \gamma_{k} \langle \delta^{k}, G_{\gamma_{k}}(x^{k}) \rangle + \gamma_{k} \|\delta^{k}\|^{2} \} \notag \\
        &= P_{\text{max}} - P_{\text{min}} + \sum_{k=1}^{T} \left\{ \gamma_{k} \left\langle c \delta^{k}, \tfrac{1}{c} G_{\gamma_{k}}(x^{k}) \right\rangle + \gamma_{k} \|\delta^{k}\|^{2} \right\} \notag \\
        &\leq P_{\text{max}} - P_{\text{min}} + \sum_{k=1}^{T} \left\{ \gamma_{k} \Big( \tfrac{1}{2} \| c \delta^{k} \|^{2} + \tfrac{1}{2} \big\| \tfrac{1}{c} G_{\gamma_{k}}(x^{k}) \big\|^{2} \Big) + \gamma_{k} \|\delta^{k}\|^{2} \right\} 
    \end{align}
    for any $c > 0$. Under Assumption $\mathrm{B}$, we may claim that
    \begin{equation}\label{biased-RSG-eqn1'}
        \mathbb{E}[\|\delta^{k}\|^{2}]\leq N (\sigma^{2}_{k}+(S_{k}-1)\mu^{2}_{k})/S_{k}.
    \end{equation}
    Indeed, we have
    \begin{equation*}
        \begin{aligned}
            \mathbb{E}[\|\delta^{k}_{i}\|^{2} \mid x^{k}] &= \mathbb{E} \left[ \left\| \frac{1}{S_{k}}\sum_{l=1}^{S_{k}}\nabla_{x_{i}}\Tilde{f}_{i}(x^{k}_{i}, x^{k}_{-i}, \xi^{k}_{i, l}) - \frac{1}{S_{k}}\sum_{l=1}^{S_{k}} \nabla_{x_{i}}f_{i}(x^{k}) \right\|^{2} \,\middle\vert\, x^{k} \right] \\
            &= \frac{1}{S^{2}_{k}} \mathbb{E} \left[ \left\| \sum_{l=1}^{S_{k}} (\nabla_{x_{i}}\Tilde{f}_{i}(x^{k}_{i}, x^{k}_{-i}, \xi^{k}_{i, l})-\nabla_{x_{i}}f_{i}(x^{k})) \right\|^{2} \,\middle\vert\, x^{k} \right] \\
            &\overset{(*)}{\leq} \frac{1}{S^{2}_{k}} \left[ S_{k}\sigma^{2}_{k} + S_{k}(S_{k}-1)\mu^{2}_{k} \right] = \frac{1}{S_{k}} \left[ \sigma^{2}_{k} + (S_{k}-1)\mu^{2}_{k} \right],
        \end{aligned}
    \end{equation*}
    where $(*)$ is due to assumptions (B2) and (B3), together with the fact that $\{\xi^{k}_{i, l}\}_{l=1}^{S_{k}}$ are i.i.d. realizations. It leads to
    \begin{equation}\label{biased-RSG-eqn2}
        \mathbb{E}[\|\delta^{k}\|^{2}] = \mathbb{E} \left[\, \mathbb{E} \left[ \|\delta^{k}\|^{2} \,\middle\vert\, x^{k} \right] \,\right] = \mathbb{E} \left[\, \mathbb{E} \left[ \sum_{i=1}^{N}\|\delta_{i}^{k}\|^{2} \,\middle\vert\, x^{k} \right] \,\right] \leq \frac{N}{S_{k}} \left[ \sigma^{2}_{k} + (S_{k}-1)\mu^{2}_{k} \right].
    \end{equation}
    By taking unconditional expectation on both sides of \eqref{biased-RSG-eqn1}, we arrive that
    \begin{equation*}
        \sum_{k=1}^{T} (\gamma_{k}-L(\gamma_{k})^{2}) \mathbb{E}[\| \Tilde{G}_{\gamma_{k}}(x^{k}, \xi^{k}) \|^{2}] \leq P_{\text{max}} - P_{\text{min}} + \sum_{k=1}^{T} \gamma_{k} \frac{2+c^{2}}{2} \mathbb{E}[\| \delta^{k} \|^{2}] + \sum_{k=1}^{T} \frac{\gamma_{k}}{2c^{2}} \mathbb{E}[\| G_{\gamma_{k}}(x^{k})\|^{2}].
    \end{equation*}
    Since $0<\gamma_{k}\leq 1/(2L)$ hence we have $\tfrac{1}{2}\gamma_{k} \leq \gamma_{k} - L(\gamma_{k})^{2}$, together with \eqref{biased-RSG-eqn2}, it follows that
    \begin{equation*}
        \begin{aligned}
            &\sum_{k=1}^{T} (\gamma_{k}-L(\gamma_{k})^{2}) \mathbb{E}[\| \Tilde{G}_{\gamma_{k}}(x^{k}, \xi^{k}) \|^{2}] \\
            &\leq P_{\text{max}} - P_{\text{min}} + \frac{2+c^{2}}{2}N\sum_{k=1}^{T}\gamma_{k}\frac{\sigma^{2}_{k}+(S_{k}-1)\mu^{2}_{k}}{S_{k}} + \frac{1}{c^{2}} \sum_{k=1}^{T} (\gamma_{k} - L(\gamma_{k})^{2}) \mathbb{E}[\| G_{\gamma_{k}}(x^{k})\|^{2}].
        \end{aligned}
    \end{equation*}
    By dividing both sides by $\sum_{k=1}^{T} (\gamma_{k}-L(\gamma_{k})^{2}) > 0$, we know from the definition of $x^{R}$ that
    \begin{equation*}
        \mathbb{E}[\| \Tilde{G}_{\gamma_{R}}(x^{R}, \xi^{R}) \|^{2}] \leq \frac{2LD^{2}+(2+c^{2})N\sum_{k=1}^{T}\gamma_{k}((\sigma^{2}_{k}+(S_{k}-1)\mu^{2}_{k})/S_{k})}{2\sum_{k=1}^{T}(\gamma_{k}-L(\gamma_{k})^{2})} + \frac{1}{c^{2}}\mathbb{E}[\| G_{\gamma_{R}}(x^{R}) \|^{2}],
    \end{equation*}
    as desired.

    \noindent (ii) By plugging $\gamma_{k} = 1/(2L)$ and $S_{k} = S$ into the upper bound of $\mathbb{E}[\| \Tilde{G}_{\gamma_{R}}(x^{R}, \xi^{R}) \|^{2}]$, we may arrive
    \begin{equation*}
        \mathbb{E}[\| \Tilde{G}_{\gamma_{R}}(x^{R}, \xi^{R}) \|^{2}] \leq \frac{4L^{2}D^{2}}{T} + \frac{(2+c^{2})N}{ST}\sum_{k=1}^{T}(\sigma^{2}_{k} + (S-1)\mu^{2}_{k}) + \frac{1}{c^{2}}\mathbb{E}[\| G_{\gamma_{R}}(x^{R}) \|^{2}].
    \end{equation*}
    Therefore, we have that
    \begin{equation*}
        \begin{aligned}
            &\mathbb{E}[\| G_{\gamma_{R}}(x^{R}) \|^{2}] \leq 2\mathbb{E}[\| \Tilde{G}_{\gamma_{R}}(x^{R}, \xi^{R}) \|^{2}] + 2\mathbb{E}[\| G_{\gamma_{R}}(x^{R}) - \Tilde{G}_{\gamma_{R}}(x^{R}, \xi^{R}) \|^{2}] \\
            &\leq 2 \left( \frac{4L^{2}D^{2}}{T} + \frac{(2+c^{2})N}{ST}\sum_{k=1}^{T}(\sigma^{2}_{k} + (S-1)\mu^{2}_{k}) + \frac{1}{c^{2}}\mathbb{E}[\| G_{\gamma_{R}}(x^{R}) \|^{2}] \right) + 2\mathbb{E}[\| \delta^{R} \|^{2}] \\
            &= 2 \left( \frac{4L^{2}D^{2}}{T} + \frac{(2+c^{2})N}{ST}\sum_{k=1}^{T}(\sigma^{2}_{k} + (S-1)\mu^{2}_{k}) + \frac{1}{c^{2}}\mathbb{E}[\| G_{\gamma_{R}}(x^{R}) \|^{2}] \right) + \frac{2\sum_{k=1}^{T}(\gamma_{k}-L(\gamma_{k})^{2})\mathbb{E}[\|\delta^{k}\|^{2}]}{\sum_{k=1}^{T}(\gamma_{k}-L(\gamma_{k})^{2})},
        \end{aligned}
    \end{equation*}
    where the last equality is from the definition of $\delta^{R}$. By rearranging the terms, together with $\gamma_{k} = 1/(2L)$ and $S_{k} = S$, it follows from \eqref{biased-RSG-eqn2} that
    \begin{align}\label{biased-RS-RSG-useful}
        ( 1 - 2/c^{2} ) \mathbb{E}[\| G_{\gamma_{R}}(x^{R}) \|^{2}] &\leq \frac{8L^{2}D^{2}}{T} + \frac{(4+2c^{2})N}{ST}\sum_{k=1}^{T}(\sigma^{2}_{k} + (S-1)\mu^{2}_{k}) + \frac{2N}{ST}\sum_{k=1}^{T}(\sigma^{2}_{k} + (S-1)\mu^{2}_{k}) \notag \\
        &= \frac{8L^{2}D^{2}}{T} + \frac{(6+2c^{2})N\sum_{k=1}^{T}(\sigma^{2}_{k}+(S-1)\mu^{2}_{k})}{ST}.
    \end{align}
    By the fact that $T = \lfloor M/(SN) \rfloor \geq M/(2SN)$ and $\sigma_{k}\leq \sigma$, together with \eqref{S-selection}, we may establish that
    \begin{align}\label{biased-RSG-eqn3}
        & ( 1 - 2/c^{2} ) \mathbb{E}[\| G_{\gamma_{R}}(x^{R}) \|^{2}] \notag \\
        &\leq \frac{16L^{2}D^{2}SN}{M} + \frac{(12+4c^{2})N^{2}S\sum_{k=1}^{T}\mu^{2}_{k}}{M} + \frac{(12+4c^{2})N^{2}\sum_{k=1}^{T}(\sigma^{2}-\mu^{2}_{k})}{M} \notag \\
        &\leq \frac{16L^{2}D^{2}N}{M} \left( 1 + \frac{\sigma\sqrt{6M}}{4LD} \right) + \frac{(12+4c^{2})N^{2}\sum_{k=1}^{T}\mu^{2}_{k}}{M} \left( 1 + \frac{\sigma\sqrt{6M}}{4LD} \right) + \frac{(12+4c^{2})N^{2}\sum_{k=1}^{T}(\sigma^{2}-\mu^{2}_{k})}{M} \notag \\
        &= \frac{16L^{2}D^{2}N}{M} \left( 1 + \frac{\sigma\sqrt{6M}}{4LD} \right) + \frac{(12+4c^{2})N^{2}\sum_{k=1}^{T}\mu^{2}_{k}}{M} \left( \frac{\sigma\sqrt{6M}}{4LD} \right) + \frac{(12+4c^{2})N^{2}\sum_{k=1}^{T}\sigma^{2}}{M}.
    \end{align}
    Since $T\leq M/(SN)$, we have the upper bound
    \begin{equation}\label{biased-RSG-eqn4}
        \begin{aligned}
            \frac{(12+4c^{2})N^{2}\sum_{k=1}^{T}\sigma^{2}}{M} &\leq \frac{(12+4c^{2})N\sigma^{2}}{S} \leq \frac{\tfrac{4}{3}\sqrt{6}(6+2c^{2})LDN\sigma}{\sqrt{M}}.
        \end{aligned}
    \end{equation}
    Consequently, by plugging \eqref{biased-RSG-eqn4} into \eqref{biased-RSG-eqn3}, we arrive that
    \begin{equation*}
        \begin{aligned}
            (1-2/c^{2}) \mathbb{E}[\| G_{\gamma_{R}}(x^{R}) \|^{2}] \leq \frac{16L^{2}D^{2}N}{M} + \frac{\tfrac{4}{3}\sqrt{6}(9+2c^{2})L^{2}D^{2}N\sigma + \sqrt{6}(3+c^{2})N^{2}\sigma\sum_{k=1}^{T}\mu^{2}_{k}}{LD\sqrt{M}},
        \end{aligned}
    \end{equation*}
    implying that
    \begin{equation*}
        \mathbb{E}[\| G_{\gamma_{R}}(x^{R}) \|^{2}] \leq \frac{16L^{2}D^{2}N}{(1-2/c^{2})M} + \frac{\tfrac{4}{3}\sqrt{6}(9+2c^{2})L^{2}D^{2}N\sigma + \sqrt{6}(3+c^{2})N^{2}\sigma\sum_{k=1}^{T}\mu^{2}_{k}}{(1-2/c^{2})LD\sqrt{M}}.
    \end{equation*}
    When $c = \sqrt{3}$, we have
    \begin{equation*}
        \mathbb{E}[\| G_{\gamma_{R}}(x^{R}) \|^{2}] \leq \frac{48L^{2}D^{2}N}{M} + \frac{60\sqrt{6}L^{2}D^{2}N\sigma + 18\sqrt{6}N^{2}\sigma\sum_{k=1}^{T}\mu^{2}_{k}}{LD\sqrt{M}},
    \end{equation*}
    which completes the proof.
\end{proof}

\begin{remark}\em
    We know from \eqref{biased-RSG-final-bound-0} that if the bias sequence $\{ \mu_{k} \}_{k\geq 0}$ is square summable, the iteration and sample complexities of the b-RSG scheme are $\mathcal{O}(N\epsilon^{-2})$ and $\mathcal{O}(N^{4}\epsilon^{-4})$, respectively. It can be seen that the b-RSG scheme exhibits a stronger dependence on $N$ than the standard RSG. Note that when the problem has a bilevel structure and the biases come from inexact lower-level solutions, we may increase the number of lower-level iterations to make the bias sequence decreasing and square summable. We will discuss this in more detail in Section \ref{Sec-5}.
\end{remark}
 
\section{Stochastic Nonconvex Nonsmooth Potential Games}\label{Sec-4}

Building on the convergence results of RSG, we are ready to examine the randomized smoothed RSG (RS-RSG) scheme for stochastic potential nonconvex nonsmooth games. In this section, we consider the stochastic $N$-player Lipschitz continuous game
\begin{equation}\label{Lipschitz-continuous-games}
    \min_{x_{i}\in X_{i}} ~ f_{i}(x_{i}, x_{-i}) \triangleq \underbrace{\mathbb{E} \Big[ \Tilde{h}_{i}(x_{i}, \xi) \Big]}_{\triangleq \, h_{i}(x_{i})} \, + \, \underbrace{\mathbb{E} \Big[ \Tilde{m}_{i}(x_{i}, x_{-i}, \xi) \Big]}_{\triangleq \, m_{i}(x_{i}, x_{-i})}, ~ \forall i\in [N], \tag{$\mathrm{G^{LC}}$}
\end{equation}
where each $X_{i} \subseteq \mathbb{R}^{n_{i}}$ is convex and closed, each $\Tilde{h}_{i}(\bullet, \xi)$ is $L_{i}$-Lipschitz continuous over $X_{i}$ for any $\xi$, and each $\Tilde{m}_{i}(\bullet, x_{-i}, \xi)$ is continuously differentiable for any $x_{-i}$ and any $\xi$. We denote the randomized smoothed counterpart of \eqref{Lipschitz-continuous-games} by
\begin{equation}\label{RS-games}
    \min_{x_{i}\in X_{i}} ~ f^{\eta}_{i}(x_{i}, x_{-i}) \triangleq h^{\eta}_{i}(x_{i}) + m_{i}(x_{i}, x_{-i}), ~ \forall i\in [N], \tag{$\mathrm{G^{RS}}$}
\end{equation}
where $h^{\eta}_{i}(x_{i})$ is the randomized smoothing of $h_{i}(x_{i})$, i.e., $h^{\eta}_{i}(x_{i}) = \mathbb{E}_{u_{i}\in \mathbb{B}_{n_{i}}}[h_{i}(x_{i} + \eta u_{i})]$. Throughout this section, we always assume that \eqref{Lipschitz-continuous-games} admits a potential function $P(x)$.

\begin{remark}\label{xi-assumption}\em
    Alternatively, we may require each $\Tilde{h}_{i}(\bullet,\xi)$ to be $\Tilde{L}_{i}(\xi)$-Lipschitz continuous. Then under a suitable integrability condition on $\Tilde{L}_{i}(\xi)$, an analog of the above assumption can be obtained. For purpose of simplicity, we directly assume that each $\Tilde{h}_{i}(\bullet, \xi)$ is $L_{i}$-Lipschitz continuous for any $\xi \in \Xi$.
\end{remark}

The potentiality of \eqref{Lipschitz-continuous-games} implies that of \eqref{RS-games}, as captured by the following proposition.

\begin{proposition}\label{potentiality-implies-potentiality}
    Suppose that the stochastic Lipschitz continuous game \eqref{Lipschitz-continuous-games} is a potential game and $P$ denotes its potential function. Then its randomized smoothing counterpart \eqref{RS-games} is also a potential game with a smooth potential function $P^{\eta}$, defined as
    \begin{equation}\label{new-potential-function}
        P^{\eta}(x) \triangleq \underbrace{P(x) - \sum_{i=1}^{N} h_{i}(x_{i})}_{\triangleq P_{m}(x)} + \sum_{i=1}^{N} h^{\eta}_{i}(x_{i}).
    \end{equation}
\end{proposition}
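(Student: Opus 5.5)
The plan is to verify the defining identity \eqref{potential-function} for \eqref{RS-games} with $P^{\eta}$ directly, and then to establish smoothness separately. Fix $i\in[N]$, $x_{-i}\in X_{-i}$ and $x^{1}_{i},x^{2}_{i}\in X_{i}$. In $P^{\eta}(x^{1}_{i},x_{-i})-P^{\eta}(x^{2}_{i},x_{-i})$, every summand $h_{j}(x_{j})$ and $h^{\eta}_{j}(x_{j})$ with $j\neq i$ depends only on $x_{j}$, so these cancel. What remains is
\[
\big[P(x^{1}_{i},x_{-i})-P(x^{2}_{i},x_{-i})\big]-\big[h_{i}(x^{1}_{i})-h_{i}(x^{2}_{i})\big]+\big[h^{\eta}_{i}(x^{1}_{i})-h^{\eta}_{i}(x^{2}_{i})\big].
\]
By the potentiality of \eqref{Lipschitz-continuous-games}, the first bracket equals $f_{i}(x^{1}_{i},x_{-i})-f_{i}(x^{2}_{i},x_{-i})$. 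Since $f_{i}=h_{i}+m_{i}$, this is $h_{i}(x^{1}_{i})-h_{i}(x^{2}_{i})+m_{i}(x^{1}_{i},x_{-i})-m_{i}(x^{2}_{i},x_{-i})$. The $h_{i}$ terms cancel, leaving $h^{\eta}_{i}(x^{1}_{i})-h^{\eta}_{i}(x^{2}_{i})+m_{i}(x^{1}_{i},x_{-i})-m_{i}(x^{2}_{i},x_{-i})$, which is exactly $f^{\eta}_{i}(x^{1}_{i},x_{-i})-f^{\eta}_{i}(x^{2}_{i},x_{-i})$. Hence $P^{\eta}$ is a potential for \eqref{RS-games}.

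For smoothness, write $P^{\eta}=P_{m}+\sum_{i}h^{\eta}_{i}$. By Lemma \ref{RS-property-grad}(i), each $h^{\eta}_{i}$ is $C^{1}$. Applying Lemma \ref{RS-property-grad}(iv) on $\mathbb{R}^{n_{i}}$ with constant $L_{i}$ gives a Lipschitz gradient with constant $L_{i}\sqrt{n_{i}}/\eta$. This requires $h_{i}$ to be $L_{i}$-Lipschitz on $X_{i}+\eta\mathbb{B}$, which I will take as part of the standing Lipschitz assumption, extending it if necessary. For $P_{m}$, the computation above shows that $P_{m}(x)=P(x)-\sum_{j}h_{j}(x_{j})$ satisfies $P_{m}(x^{1}_{i},x_{-i})-P_{m}(x^{2}_{i},x_{-i})=m_{i}(x^{1}_{i},x_{-i})-m_{i}(x^{2}_{i},x_{-i})$. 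So $P_{m}$ is a potential for the game with objectives $m_{i}$, and each $m_{i}(\bullet,x_{-i})$ is $C^{1}$ because it is an expectation of $C^{1}$ functions (by differentiation under the integral). The argument of Proposition \ref{potentiality-integrability} then applies verbatim: $P_{m}(\bullet,x_{-i})-m_{i}(\bullet,x_{-i})$ is constant in $x_{i}$, so $\nabla_{x_{i}}P_{m}=\nabla_{x_{i}}m_{i}$ and $\nabla P_{m}=(\nabla_{x_{i}}m_{i})_{i=1}^{N}$. Consequently $\nabla P^{\eta}(x)=(\nabla h^{\eta}_{i}(x_{i})+\nabla_{x_{i}}m_{i}(x))_{i=1}^{N}=(\nabla_{x_{i}}f^{\eta}_{i}(x))_{i=1}^{N}$.

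The main obstacle is making ``smooth'' precise for the $P_{m}$ part. Lemma \ref{RS-property-grad} handles the smoothing terms, but the partial gradients of $P_{m}$ are only as regular as the $\nabla_{x_{i}}m_{i}$. Joint continuous differentiability, and any Lipschitz-gradient claim, therefore requires $\nabla_{x_{i}}m_{i}$ to be jointly continuous or Lipschitz in $x$. I will state this as the interpretation: $P^{\eta}$ is $C^{1}$ (respectively $L$-smooth) whenever the $m_{i}$ are jointly $C^{1}$ (respectively have Lipschitz gradients). This mirrors the way Proposition \ref{potentiality-integrability} passes from partial to full gradients.
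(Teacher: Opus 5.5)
Your proof is correct and follows the route the paper intends. The paper omits the details, saying only that the result follows from the definition \eqref{potential-function} and Proposition~\ref{potentiality-integrability}. Its subsequent remark identifies $P_m$ as the potential of the game with objectives $m_i$ and applies Proposition~\ref{potentiality-integrability} to it, exactly as you do, with Lemma~\ref{RS-property-grad} covering the $h_i^{\eta}$ terms. Your two caveats are legitimate refinements of points the paper glosses over, not gaps in your argument: joint continuity (or Lipschitz continuity) of $\nabla_{x_i} m_i$ is needed to pass from partial gradients to a $C^1$ (or $L$-smooth) $P_m$, and $h_i$ must be Lipschitz on $X_i+\eta\mathbb{B}$.
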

\begin{proof}
    The proof follows from Proposition \ref{potentiality-integrability} and the potential function definition \eqref{potential-function} immediately. We omit it here.
\end{proof}

\begin{remark}\em
    It can be shown that $P_{m}(x) = P(x) - \sum_{i=1}^{N} h_{i}(x_{i})$ is a potential function of the smooth $N$-player game where the $i$th player solves $\min_{x_{i}\in X_{i}} m_{i}(x_{i}, x_{-i})$. Since each $m_{i}(\bullet, x_{-i})$ is smooth, we know from Proposition \ref{potentiality-integrability} that $P_{m}(x)$ is continuously differentiable.
\end{remark}

\subsection{Randomized Smoothed RSG Scheme}\label{Sec-4.1}

We impose Assumption $\mathrm{C}$ throughout this section. In particular, the potentiality assumption $\mathrm{(C1)}$ is justified by Proposition \ref{potentiality-implies-potentiality}. In Lemma \ref{how-to-evaluate-Leta}, we will illustrate how to effectively evaluate $L^{\eta}$ from Lipschitz smoothness of $P_{m}(x)$ and Lipschitz continuity of $h_{i}(x_{i})$.

\vspace{5pt}

\begin{custombox}
\noindent\textbf{Assumption C.}\\
(C1) (Potentiality) The randomized smoothed game \eqref{RS-games} admits an $L^{\eta}$-smooth potential function $P^{\eta}(x)$ attaining its maximum $P^{\eta}_{\max}$ and minimum $P^{\eta}_{\min}$ over $X$. \\
(C2) (Unbiasedness) For any $x^{k}$, we have $\mathbb{E}[\nabla_{x_{i}}\Tilde{m}_{i}(x^{k}_{i}, x^{k}_{-i}, \xi) \mid x^{k}] = \nabla_{x_{i}}m_{i}(x^{k}_{i}, x^{k}_{-i})$ for any $i\in [N]$.\\
(C3) (Bounded second moment) For any $x^{k}$, we have $\mathbb{E}[ \|\nabla_{x_{i}}\Tilde{m}_{i}(x^{k}_{i}, x^{k}_{-i}, \xi)-\nabla_{x_{i}}m_{i}(x^{k}_{i}, x^{k}_{-i})\|^{2} \mid x^{k}]\leq \sigma_{m}^{2}$ for some $\sigma_{m}>0$ and any $i\in [N]$.
\end{custombox}

\vspace{2pt}

\begin{lemma}\label{how-to-evaluate-Leta}
    Consider the stochastic $N$-player game \eqref{Lipschitz-continuous-games} and its randomized smoothed counterpart \eqref{RS-games}. Suppose that $P^{\eta}(x)$ defined in \eqref{new-potential-function} is a potential function of \eqref{RS-games}, where $P_{m}(x)$ is $L^{1}_{m}$-smooth. Then $P^{\eta}(x)$ is $L^{\eta}$-smooth with $L^{\eta} \triangleq L^{1}_{m} + \eta^{-1}L_{\max}\sqrt{N n_{\max}}$, where $L_{\max} \triangleq \max_{i\in [N]} L_{i}$ and $n_{\max} \triangleq \max_{i\in [N]} n_{i}$.
\end{lemma}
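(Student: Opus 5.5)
We write $\nabla P^{\eta}(x)$ as the sum of a smooth part and a separable smoothed part, and bound the Lipschitz constant of each separately. By \eqref{new-potential-function}, $P^{\eta}(x) = P_{m}(x) + \sum_{i=1}^{N} h^{\eta}_{i}(x_{i})$. Therefore
\[
\nabla P^{\eta}(x) = \nabla P_{m}(x) + \big(\nabla h^{\eta}_{i}(x_{i})\big)_{i=1}^{N}.
\]
Here $\nabla P_{m}$ exists by Proposition \ref{potentiality-integrability} and the remark following Proposition \ref{potentiality-implies-potentiality}. Each $\nabla h^{\eta}_{i}$ exists by Lemma \ref{RS-property-grad}(i). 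For any $x,y\in X$, the triangle inequality gives
\[
\|\nabla P^{\eta}(x)-\nabla P^{\eta}(y)\| \le \|\nabla P_{m}(x)-\nabla P_{m}(y)\| + \Big\|\big(\nabla h^{\eta}_{i}(x_{i})-\nabla h^{\eta}_{i}(y_{i})\big)_{i=1}^{N}\Big\|.
\]
The first term is at most $L^{1}_{m}\|x-y\|$ by the assumed $L^{1}_{m}$-smoothness of $P_{m}$.

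For the second term we work block by block. Each $h_{i}=\mathbb{E}[\tilde h_{i}(\bullet,\xi)]$ is $L_{i}$-Lipschitz, since every $\tilde h_{i}(\bullet,\xi)$ is $L_{i}$-Lipschitz and expectation preserves this bound. Applying Lemma \ref{RS-property-grad}(iv) in dimension $n_{i}$ then gives
\[
\|\nabla h^{\eta}_{i}(x_{i})-\nabla h^{\eta}_{i}(y_{i})\| \le \frac{L_{i}\sqrt{n_{i}}}{\eta}\,\|x_{i}-y_{i}\| \le \frac{L_{\max}\sqrt{n_{\max}}}{\eta}\,\|x_{i}-y_{i}\|.
\]
Squaring and summing over $i$ yields
\[
\Big\|\big(\nabla h^{\eta}_{i}(x_{i})-\nabla h^{\eta}_{i}(y_{i})\big)_{i}\Big\| \le \frac{L_{\max}\sqrt{n_{\max}}}{\eta}\,\|x-y\| .
\]
This already gives the constant $\eta^{-1}L_{\max}\sqrt{n_{\max}}$, which is at most the stated $\eta^{-1}L_{\max}\sqrt{Nn_{\max}}$. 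Combining the two terms shows that $P^{\eta}$ is $(L^{1}_{m}+\eta^{-1}L_{\max}\sqrt{Nn_{\max}})$-smooth. A cruder route, bounding each block difference by $\|x-y\|$ before summing, would produce exactly the factor $\sqrt{N}$ in the statement, so either argument suffices.

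The main point needing care is that Lemma \ref{RS-property-grad}(iv) requires $h_{i}$ to be Lipschitz on the enlarged set $X_{i}+\eta\mathbb{B}_{n_{i}}$, whereas Lipschitz continuity is assumed only over $X_{i}$. I would handle this by reading the standing assumption as Lipschitz continuity on a neighborhood containing $X_{i}+\eta\mathbb{B}$, which is implicit wherever $h^{\eta}_{i}$ is defined. Alternatively, one can extend $h_{i}$ to all of $\mathbb{R}^{n_{i}}$ with the same constant $L_{i}$ via a McShane extension.
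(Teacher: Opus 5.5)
Your argument is correct and follows the paper's proof. Both split $\nabla P^{\eta}$ into $\nabla P_{m}$ plus the block vector $(\nabla h^{\eta}_{i}(x_{i}))_{i=1}^{N}$, apply the triangle inequality, and invoke Lemma~\ref{RS-property-grad}(iv) block by block.

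The difference is how the separable part is summed. The paper treats it as a sum of $N$ vectors and uses $\|\sum_{i=1}^{N} a_{i}\|^{2}\le N\sum_{i=1}^{N}\|a_{i}\|^{2}$, which is exactly where the $\sqrt{N}$ in $L^{\eta}$ comes from. You note that these vectors occupy disjoint coordinate blocks, so $\|\sum_{i} a_{i}\|^{2}=\sum_{i}\|a_{i}\|^{2}$. This gives the sharper constant $L^{1}_{m}+\eta^{-1}L_{\max}\sqrt{n_{\max}}$, which implies the stated one. Your version is strictly tighter and would improve the $N$-dependence the paper tracks through $L^{\eta}$ in Theorem~\ref{RS-RSG-complexity}. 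The paper's version simply produces the constant it carries forward.

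You are also right that Lemma~\ref{RS-property-grad}(iv) needs $h_{i}$ to be Lipschitz on $X_{i}+\eta\mathbb{B}_{n_{i}}$; the paper invokes it without comment. Reading the standing assumption on that enlarged set is the right fix. The McShane alternative does not work as stated. $h^{\eta}_{i}$, and the oracle values $\tilde h_{i}(x_{i}\pm v_{i},\xi)$ used by RS-RSG, depend on the actual values of $h_{i}$ outside $X_{i}$. Replacing $h_{i}$ by an extension therefore generally changes $h^{\eta}_{i}$ and $P^{\eta}$, so you would be bounding the smoothness of a different function.
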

\begin{proof}
    By \eqref{new-potential-function}, we have that for any $x, y\in X$,
    \begin{align*}
        \| \nabla P^{\eta}(x) - \nabla P^{\eta}(y) \| &\leq \| \nabla P_{m}(x) - \nabla P_{m}(y) \| + \left\| \nabla \left( \sum_{i=1}^{N} h^{\eta}_{i}(x_{i}) \right) - \nabla \left( \sum_{i=1}^{N} h^{\eta}_{i}(y_{i}) \right) \right\|.
    \end{align*}
    By Lemma \ref{RS-property-grad}-(iv), it follows that
    \begin{align*}
        &\left\| \nabla \left( \sum_{i=1}^{N} h^{\eta}_{i}(x_{i}) \right) - \nabla \left( \sum_{i=1}^{N} h^{\eta}_{i}(y_{i}) \right) \right\|^{2} = \left\| \sum_{i=1}^{N} ( \nabla h^{\eta}_{i}(x_{i}) - \nabla h^{\eta}_{i}(y_{i}) ) \right\|^{2} \\
        &\leq N \sum_{i=1}^{N} \| \nabla h^{\eta}_{i}(x_{i}) - \nabla h^{\eta}_{i}(y_{i}) \|^{2} = N \sum_{i=1}^{N} \| \nabla_{x_{i}} h^{\eta}_{i}(x_{i}) - \nabla_{x_{i}} h^{\eta}_{i}(y_{i}) \|^{2} \\
        &\leq N \sum_{i=1}^{N} \frac{L^{2}_{i}n_{i}}{\eta^{2}} \| x_{i} - y_{i} \|^{2} \leq \frac{NL^{2}_{\max}n_{\max}}{\eta^{2}} \sum_{i=1}^{N} \| x_{i} - y_{i} \|^{2} = \frac{NL^{2}_{\max}n_{\max}}{\eta^{2}} \| x - y \|^{2},
    \end{align*}
    where $L_{\max} \triangleq \max_{i\in [N]} L_{i}$ and $n_{\max} \triangleq \max_{i\in [N]} n_{i}$. Since $P_{m}(x)$ is $L^{1}_{m}$-smooth, we deduce that
    \begin{equation*}
        \| \nabla P^{\eta}(x) - \nabla P^{\eta}(y) \| \leq \underbrace{\left( L^{1}_{m} + \frac{L_{\max}\sqrt{N n_{\max}}}{\eta} \right)}_{\triangleq L^{\eta}} \| x - y \|,
    \end{equation*}
    which completes the proof.
\end{proof}

By Lemma \ref{RS-property-grad}, the exact gradient of $h^{\eta}_{i}(x_{i})$ is given by
\begin{equation*}
    \nabla_{x_{i}} h^{\eta}_{i}(x_{i}) = \left( \frac{n_{i}}{2\eta} \right) \mathbb{E}_{v_{i}\in \eta \mathbb{S}_{n_{i}}} \left[ (h_{i}(x_{i} + v_{i}) - h_{i}(x_{i} - v_{i})) \frac{v_{i}}{\|v_{i}\|} \right].
\end{equation*}
Therefore, at $k$th iteration we may employ the update rule
\begin{equation*}
    x^{k+1}_{i} = \Pi_{X_{i}} [ x^{k}_{i} - \gamma_{k} \Delta^{k, f}_{i, S_{k}}], ~ \forall i\in [N],
\end{equation*}
where $\gamma_{k}$ is the stepsize and $\Delta^{k, f}_{i, S_{k}}$ is the mini-batch gradient with batch size $S_{k}$, defined by $\Delta^{k, f}_{i, S_{k}} \triangleq \Delta^{k, h}_{i, S_{k}} + \Delta^{k, m}_{i, S_{k}}$ with $\Delta^{k, h}_{i, S_{k}} \triangleq \frac{1}{S_{k}}\sum_{l=1}^{S_{k}} \Delta^{k, h}_{i, l}$ and $\Delta^{k, m}_{i, S_{k}} \triangleq \frac{1}{S_{k}}\sum_{l=1}^{S_{k}} \Delta^{k, m}_{i, l}$, where $\Delta^{k, h}_{i, l}$ and $\Delta^{k, m}_{i, l}$ are
\begin{equation}\label{RS-RSG-minibatch-gradient}
    \Delta^{k, h}_{i, l} \triangleq \frac{n_{i}}{2\eta}(\Tilde{h}_{i}(x^{k}_{i}+v^{k}_{i,l}, \xi^{k}_{i,l})-\Tilde{h}_{i}(x^{k}_{i}-v^{k}_{i,l}, \xi^{k}_{i,l}))\frac{v^{k}_{i,l}}{\|v^{k}_{i,l}\|} \quad\text{and}\quad \Delta^{k, m}_{i, l} \triangleq \nabla_{x_{i}}\Tilde{m}_{i}(x^{k}_{i}, x^{k}_{-i}, \xi^{k}_{i, l}).
\end{equation}
Based on the above discussion, we state our RS-RSG scheme in Algorithm \ref{RS-RSG}.

\begin{algorithm}
\caption{~RS-RSG scheme} \label{RS-RSG}
\begin{algorithmic}[1]
\State \textbf{Input}: starting point $x^{0}\in X$, iteration limit $T$, stepsizes $\{\gamma_{k}\}_{k\geq 0}$, batch sizes $\{S_{k}\}_{k\geq 0}$, and probability mass function $P_{R}$ supported on $\{ 1, \dots, T \}$.
\State Let $R$ be a random variable with probability mass function $P_{R}$.
\For{$k = 0, 1, \dots, R-1$}
    \For{$i = 1, \dots, N$}
        \State (i) Pick $S_{k}$ i.i.d. realizations $\{\xi^{k}_{i, l}\}_{l=1}^{S_{k}}$ of $\xi$ and realizations $\{ v^{k}_{i,l} \}_{l=1}^{S_{k}}$ of $v_{i}\in \eta \mathbb{S}_{n_{i}}$;
        \State (ii) Compute $\Delta^{k, h}_{i, S_{k}}$, $\Delta^{k, m}_{i, S_{k}}$, and the compute $\Delta^{k, f}_{i, S_{k}}$;
        \State (iii) Update $x^{k+1}_{i} = \Pi_{X_{i}} [ x^{k}_{i} - \gamma_{k}\Delta^{k, f}_{i, S_{k}} ]$;
    \EndFor
\EndFor
\State \textbf{Output}: $x^{R}$ as final estimate.
\end{algorithmic}
\end{algorithm}

Before proceeding, we show the following unbiasedness and bounded second moment properties.

\begin{lemma}\label{zo-gradient-unbiased-second-moment}
    For the mini-batch gradient estimator $\Delta^{k, f}_{i, l} \triangleq \Delta^{k, h}_{i, l} + \Delta^{k, m}_{i, l}$, we have\\
    \emph{(i)} Suppose assumption $\mathrm{(C2)}$ holds. Then $\mathbb{E}[ \Delta^{k, f}_{i, l} \mid x^{k} ] = \nabla_{x_{i}} f^{\eta}_{i}(x^{k}_{i}, x^{k}_{-i})$ holds for any $i\in [N]$. \\
    \emph{(ii)} Suppose assumption $\mathrm{(C3)}$ holds. Then $\mathbb{E}[\| \Delta^{k, f}_{i, l} - \nabla_{x_{i}} f^{\eta}_{i}(x^{k}_{i}, x^{k}_{-i}) \|^{2} \mid x^{k}] \leq \sigma^{2} \triangleq 32\sqrt{2\pi} L^{2}_{\max} n_{\max} + 2\sigma^{2}_{m}$ holds for any $i\in [N]$, where $L_{\max} = \max_{i\in [N]} L_{i}$, and $n_{\max} = \max_{i\in [N]} n_{i}$.
\end{lemma}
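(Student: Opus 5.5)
The plan is to split $\Delta^{k,f}_{i,l}$ into its zeroth-order part $\Delta^{k,h}_{i,l}$ and its first-order part $\Delta^{k,m}_{i,l}$, and to treat each part separately. The target gradient splits the same way: by the definition of \eqref{RS-games}, $\nabla_{x_i} f^\eta_i(x^k) = \nabla_{x_i} h^\eta_i(x^k_i) + \nabla_{x_i} m_i(x^k)$.

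\textbf{Part (i): unbiasedness.} Condition on $x^k$.
\begin{itemize}
\item For the $m$-part, (C2) directly gives $\mathbb{E}[\Delta^{k,m}_{i,l}\mid x^k]=\nabla_{x_i} m_i(x^k)$.
\item For the $h$-part, use the tower property. Since $\xi^k_{i,l}$ and $v^k_{i,l}$ are drawn independently of each other and of the past, first take the expectation over $\xi^k_{i,l}$ with $v^k_{i,l}$ fixed. This replaces $\Tilde h_i(x^k_i\pm v^k_{i,l},\xi^k_{i,l})$ by $h_i(x^k_i\pm v^k_{i,l})$.
\item Swapping expectation and difference is justified because $\Tilde h_i(\bullet,\xi)$ is $L_i$-Lipschitz. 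Hence $|\Tilde h_i(x+v,\xi)-\Tilde h_i(x-v,\xi)|\le 2L_i\eta$, so everything is bounded and integrable.
\item Next take the expectation over $v^k_{i,l}$ uniform on $\eta\mathbb{S}_{n_i}$. Lemma \ref{RS-property-grad}(i), applied to $h_i$, shows the result is exactly $\nabla_{x_i}h^\eta_i(x^k_i)$.
\end{itemize}
Adding the two parts gives (i).

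\textbf{Part (ii): second moment.} Use $\|a+b\|^2\le 2\|a\|^2+2\|b\|^2$ to write
$$\mathbb{E}\|\Delta^{k,f}_{i,l}-\nabla_{x_i}f^\eta_i\|^2\le 2\,\mathbb{E}\|\Delta^{k,h}_{i,l}-\nabla h^\eta_i\|^2+2\,\mathbb{E}\|\Delta^{k,m}_{i,l}-\nabla_{x_i} m_i\|^2.$$
\begin{itemize}
\item The second term is at most $2\sigma_m^2$ by (C3).
\item For the first term, part (i) shows the $h$-part is unbiased. The variance is therefore at most the raw second moment $\mathbb{E}\|\Delta^{k,h}_{i,l}\|^2$.
\item Bound that second moment by conditioning on $\xi^k_{i,l}$. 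For fixed $\xi$, the function $\Tilde h_i(\bullet,\xi)$ is $L_i$-Lipschitz, so Lemma \ref{RS-property-grad}(vi), applied to $\Tilde h_i(\bullet,\xi)$ in dimension $n_i$, gives $\mathbb{E}_v\|\Delta^{k,h}_{i,l}\|^2\le 16\sqrt{2\pi}L_i^2 n_i$.
\item This bound is uniform in $\xi$, so taking the outer expectation preserves it.
\item Finally, $L_i\le L_{\max}$ and $n_i\le n_{\max}$.
\end{itemize}
Combining, the total is at most $32\sqrt{2\pi}L_{\max}^2 n_{\max}+2\sigma_m^2$.

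\textbf{Main obstacle.} The step most likely to need care is applying Lemma \ref{RS-property-grad}(vi) to the sample function $\Tilde h_i(\bullet,\xi)$ rather than to $h_i$. This requires the Lipschitz property on $X_{i,\eta}$, which we assume holds, as in Remark \ref{xi-assumption}. It also needs the independence of $v$ and $\xi$ so that the conditional bound can be integrated.
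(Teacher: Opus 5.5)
Your proposal is correct and follows the paper's proof: the same split into the zeroth-order part $\Delta^{k,h}_{i,l}$ and the first-order part $\Delta^{k,m}_{i,l}$, unbiasedness via Lemma \ref{RS-property-grad}(i) and (C2), then $\|a+b\|^2\le 2\|a\|^2+2\|b\|^2$, the variance of the $h$-part bounded by its raw second moment, and Lemma \ref{RS-property-grad}(vi) to get $2\cdot 16\sqrt{2\pi}L_{\max}^2 n_{\max}+2\sigma_m^2$. Your conditioning on $\xi$ to apply (vi) to the sample function $\Tilde h_i(\bullet,\xi)$, together with your caveat that this needs Lipschitz continuity on $X_i+\eta\mathbb{B}_{n_i}$ rather than only on $X_i$, spells out a step the paper's one-line citation leaves implicit.
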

\begin{proof}
    The unbiasedness property (i) follows immediately from assumption $\mathrm{(C2)}$ and the fact that $\xi^{k}_{i, l}$ and $v^{k}_{i, l}$ are i.i.d. realizations of $\xi$ and $v_{i}$, respectively. For the second moment property (ii), we have
    \begin{equation*}
        \begin{aligned}
            & \mathbb{E}[\| \Delta^{k, f}_{i, l} - \nabla_{x_{i}} f^{\eta}_{i}(x^{k}_{i}, x^{k}_{-i}) \|^{2} \mid x^{k}] \\
            &= \mathbb{E}[\| \Delta^{k, h}_{i, l} + \Delta^{k, m}_{i, l} - \nabla_{x_{i}} h^{\eta}_{i}(x^{k}_{i}) - \nabla_{x_{i}} m_{i}(x^{k}_{i}, x^{k}_{-i}) \|^{2} \mid x^{k}] \\
            &\leq 2\mathbb{E}[\| \Delta^{k, h}_{i, l} - \nabla_{x_{i}} h^{\eta}_{i}(x^{k}_{i}) \|^{2} \mid x^{k}] + 2\mathbb{E}[\| \Delta^{k, m}_{i, l} - \nabla_{x_{i}} m_{i}(x^{k}_{i}, x^{k}_{-i}) \|^{2} \mid x^{k}] \\
            &\leq 2\mathbb{E}[\| \Delta^{k, h}_{i, l} \|^{2}] + 2\sigma^{2}_{m} \leq 32\sqrt{2\pi} L^{2}_{\max} n_{\max} + 2\sigma^{2}_{m} \triangleq \sigma^{2},
        \end{aligned}
    \end{equation*}
    where the penultimate inequality is due to the unbiasedness $\mathbb{E}[\Delta^{k, h}_{i, l} \mid x^{k}_{i}] = \nabla_{x_{i}} h^{\eta}_{i}(x^{k}_{i})$ and the last inequality is from \cite[Lemma 2.4-(v)]{marrinan-shanbhag-yousefian-2026}.
\end{proof}

Based on the above lemma, we may establish the convergence of RS-RSG. In contrast to Theorem \ref{RSG-convergence}, where we only need to consider stochastic first-order $(\mathcal{SFO})$ oracles, Theorem \ref{RS-RSG-main-theorem} also requires accounting for stochastic zeroth-order $(\mathcal{SZO})$ oracles.

\begin{theorem}[Convergence of RS-RSG]\label{RS-RSG-main-theorem}
    Consider the stochastic $N$-player Lipschitz continuous game \eqref{Lipschitz-continuous-games} and its randomized smoothed counterpart \eqref{RS-games}. Suppose that Assumption $\mathrm{C}$ holds. Suppose that we choose $\gamma_{k} = 1/(2L^{\eta})$ and $P_{R}(k) = 1/T$ for all $k = 1, \dots, T$. Given a sufficiently large number $M_{0}$ of $\mathcal{SZO}$ calls and a sufficiently large number $M_{1}$ of $\mathcal{SFO}$ calls. We define $M \triangleq \min\{ M_{0}/2, M_{1} \}$. If the batch size $S_{k}$ is fixed as
    \begin{equation*}
        S_{k} = S \triangleq \left\lceil \frac{\sigma\sqrt{6M}}{4L^{\eta}D^{\eta}} \right\rceil,
    \end{equation*}
    where $D^{\eta} \triangleq ((P^{\eta}_{\max}-P^{\eta}_{\min})/L^{\eta})^{1/2}$ and $\sigma^{2}$ is defined in Lemma \ref{zo-gradient-unbiased-second-moment}-(ii). Then we have
    \begin{equation}\label{RS-RSG-main-theorem-final-bound}
        \mathbb{E}[\| G^{\eta}_{\gamma_{R}}(x^{R}) \|^{2}] \leq \frac{16(L^{\eta})^{2}(D^{\eta})^{2}N}{M} + \frac{8\sqrt{6}L^{\eta}D^{\eta} N \sigma}{\sqrt{M}},
    \end{equation}
    where the residual is defined as $G^{\eta}_{\gamma}(x) \triangleq \tfrac{1}{\gamma} (x-\Pi_{X}[x-\gamma F^{\eta}(x)])$ with $F^{\eta}(x) \triangleq (\nabla_{x_{i}} f^{\eta}_{i}(x_{i}, x_{-i}))_{i=1}^{N}$.
\end{theorem}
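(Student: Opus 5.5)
The plan is to reduce Theorem \ref{RS-RSG-main-theorem} to Theorem \ref{RSG-convergence}(ii), applied to the smoothed game \eqref{RS-games} in place of \eqref{smooth-games}, and then to handle the separate accounting of $\mathcal{SZO}$ and $\mathcal{SFO}$ calls. The structural input is Proposition \ref{potentiality-implies-potentiality} together with (C1): the smoothed game admits the $L^{\eta}$-smooth potential $P^{\eta}$, whose maximum and minimum over $X$ are attained. By Proposition \ref{potentiality-integrability}, applicable since each $f^{\eta}_{i}(\bullet,x_{-i})=h^{\eta}_{i}+m_{i}$ is $C^{1}$ by Lemma \ref{RS-property-grad}(i), we have $\nabla P^{\eta}(x)=F^{\eta}(x)$. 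Consequently, step (iii) of Algorithm \ref{RS-RSG} reads compactly as $x^{k+1}=\Pi_{X}[x^{k}-\gamma_{k}\Delta^{k}]$ with $\Delta^{k}\triangleq(\Delta^{k,f}_{i,S_{k}})_{i=1}^{N}$, which is exactly a projected minibatch stochastic gradient step on $P^{\eta}$. This is the same form as the RSG update analysed in Theorem \ref{RSG-convergence}.

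Next I will check that Assumption A holds for this surrogate with $(P,L,\sigma)$ replaced by $(P^{\eta},L^{\eta},\sigma)$. (A1) is (C1). (A2) and (A3) for a single sample are Lemma \ref{zo-gradient-unbiased-second-moment}(i)--(ii), with $\sigma^{2}=32\sqrt{2\pi}L_{\max}^{2}n_{\max}+2\sigma_{m}^{2}$. For the minibatch estimator I need the variance bound $\mathbb{E}[\|\Delta^{k,f}_{i,S_{k}}-\nabla_{x_{i}}f^{\eta}_{i}(x^{k})\|^{2}\mid x^{k}]\le \sigma^{2}/S_{k}$. This follows from independence across $l$: the pairs $(\xi^{k}_{i,l},v^{k}_{i,l})$ are i.i.d., so the cross terms vanish by conditional unbiasedness. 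This is the same computation used to obtain \eqref{biased-RSG-eqn2}, here with $\mu_{k}=0$. Summing over the $N$ players gives $\mathbb{E}\|\Delta^{k}-F^{\eta}(x^{k})\|^{2}\le N\sigma^{2}/S_{k}$, which is precisely the ingredient behind the $(\sigma^{2}N)\sum_{k}\gamma_{k}/S_{k}$ term in \eqref{RSGR-thm-1}. With this in hand, the descent-lemma argument of Theorem \ref{RSG-convergence}(i) carries over verbatim with $D$ replaced by $D^{\eta}$.

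Finally I will redo the budget accounting in part (ii). Each iteration consumes, for each of the $N$ players and each of the $S$ samples, two zeroth-order evaluations $\tilde h_{i}(x^{k}_{i}\pm v^{k}_{i,l},\xi^{k}_{i,l})$ and one first-order evaluation $\nabla_{x_{i}}\tilde m_{i}$. Over $T$ iterations this is $2SNT$ $\mathcal{SZO}$ calls and $SNT$ $\mathcal{SFO}$ calls. I will set $T=\lfloor M/(SN)\rfloor$ with $M=\min\{M_{0}/2,M_{1}\}$, so that both budgets are respected. This is the only genuinely new point relative to Theorem \ref{RSG-convergence}, and I expect it to be the step most likely to need care, namely making sure the factor $2$ is absorbed correctly into $M$.

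Given $T\ge M/(2SN)$ for $M$ large, $\gamma_{k}=1/(2L^{\eta})$, and the stated choice of $S$, I will plug into the bound of part (i), pass from $\tilde G$ to $G^{\eta}$ using $\|G^{\eta}_{\gamma}-\tilde G_{\gamma}\|\le\|\Delta^{k}-F^{\eta}(x^{k})\|$ (nonexpansiveness of $\Pi_{X}$), and repeat the algebra of Theorem \ref{RSG-convergence}(ii). This should yield \eqref{RS-RSG-main-theorem-final-bound} with $L,D$ replaced by $L^{\eta},D^{\eta}$.
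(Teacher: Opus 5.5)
Your proposal is correct and follows the paper's route. The paper also reduces to Theorem \ref{RSG-convergence}(ii) applied to the smoothed potential $P^{\eta}$, noting only that the iteration count $T=\min\{\lfloor (M_{0}/2)/(SN)\rfloor,\lfloor M_{1}/(SN)\rfloor\}$, which equals your $\lfloor M/(SN)\rfloor$, satisfies $T\ge M/(2SN)$, and it says the rest proceeds as before; your explicit check that Assumption A transfers to the smoothed game via Proposition \ref{potentiality-implies-potentiality}, (C1), and Lemma \ref{zo-gradient-unbiased-second-moment} supplies details the paper leaves implicit.
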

\begin{proof}
    We can see that RS-RSG can perform at most $T = \min\{ \lfloor (M_{0}/2)/(SN) \rfloor, \lfloor M_{1}/(SN) \rfloor \}$ iterations. Since $M_{0}$ and $M_{1}$ are sufficiently large, we have the lower bound\footnote{If $a \leq b$ and $c \leq d$, the inequality $\min\{a, c\} \leq \min\{ b, d \}$ holds.}
    \begin{equation*}
        T \geq \min \{ (M_{0}/2)/(2SN), M_{1}/(2SN) \} = M/(2SN).
    \end{equation*}
    Therefore, we have the same lower bound on $T$ as Theorem \ref{RSG-convergence}. The reminder of the proof proceeds similarly.
\end{proof}

\begin{theorem}[Complexities of RS-RSG]\label{RS-RSG-complexity}
    Consider the same setting as Theorem \ref{RS-RSG-main-theorem}. Suppose that $P^{\eta}(x)$ defined in \eqref{new-potential-function} is a potential function of \eqref{RS-games}, where $P_{m}(x)$ is $L^{0}_{m}$-Lipschitz and $L^{1}_{m}$-smooth. Suppose that $X$ is compact with diameter $D_{X} > 0$. We define $L_{\max} = \max_{i\in [N]} L_{i}$ and $n_{\max} = \max_{i\in [N]} n_{i}$. Then, to reach a point $x^{R}$ satisfying $\mathbb{E}[\| G^{\eta}_{\gamma_{R}}(x^{R}) \|]\leq \epsilon$, we have \\
    \emph{(i)} The required sample complexity is $\mathcal{O}(L^{4}_{\max}n^{3/2}_{\max}N^{3}\eta^{-1}\epsilon^{-4})$. \\
    \emph{(ii)} The required iteration complexity is $\mathcal{O}(L^{3}_{\max} n_{\max} N \eta^{-1} \epsilon^{-2})$.
\end{theorem}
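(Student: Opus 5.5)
The plan is to start from the bound \eqref{RS-RSG-main-theorem-final-bound} of Theorem \ref{RS-RSG-main-theorem} and make every constant explicit in terms of $L_{\max}$, $n_{\max}$, $N$, $\eta$. By Jensen, $\mathbb{E}[\|G^{\eta}_{\gamma_R}(x^R)\|]\le (\mathbb{E}[\|G^{\eta}_{\gamma_R}(x^R)\|^2])^{1/2}$. For small $\epsilon$ the dominant term is the $M^{-1/2}$ one, so it suffices to have $8\sqrt{6}L^{\eta}D^{\eta}N\sigma/\sqrt{M}\lesssim \epsilon^2$, i.e. $M=\mathcal{O}\big((L^{\eta}D^{\eta})^2N^2\sigma^2\epsilon^{-4}\big)$. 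The term $16(L^{\eta}D^{\eta})^2N/M$ then costs only $M=\mathcal{O}((L^{\eta}D^{\eta})^2N\epsilon^{-2})$, which is lower order. The total number of oracle calls is $M_0+M_1=\Theta(M)$, so this $M$ is the sample complexity.

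The key quantity is $(L^{\eta}D^{\eta})^2 = L^{\eta}(P^{\eta}_{\max}-P^{\eta}_{\min})$, and I would bound its two factors separately.

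\emph{Smoothness constant.} By Lemma \ref{how-to-evaluate-Leta}, $L^{\eta}=L^1_m+\eta^{-1}L_{\max}\sqrt{Nn_{\max}} = \mathcal{O}(L_{\max}\sqrt{Nn_{\max}}\,\eta^{-1})$ for small $\eta$.

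\emph{Range of the potential.} From \eqref{new-potential-function}, $P^{\eta}=P_m+\sum_i h_i^{\eta}$. Here $P_m$ is $L^0_m$-Lipschitz, and by Lemma \ref{RS-property-grad}-(ii) each $h^{\eta}_i$ is $L_i$-Lipschitz in $x_i$. Hence $P^{\eta}$ is Lipschitz on $X$ with constant at most $L^0_m+\sqrt{N}L_{\max}$, using $\sum_i L_i\|x_i-y_i\|\le L_{\max}\sqrt N\|x-y\|$. Therefore $P^{\eta}_{\max}-P^{\eta}_{\min}\le (L^0_m+\sqrt N L_{\max})D_X=\mathcal{O}(\sqrt N L_{\max})$. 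Combining the two factors gives $(L^{\eta}D^{\eta})^2=\mathcal{O}(L^2_{\max}N n_{\max}^{1/2}\eta^{-1})$.

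\emph{Variance.} By Lemma \ref{zo-gradient-unbiased-second-moment}-(ii), $\sigma^2=\mathcal{O}(L^2_{\max}n_{\max})$.

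\emph{Sample complexity.} Substituting, $M=\mathcal{O}\big(L^2_{\max}Nn^{1/2}_{\max}\eta^{-1}\cdot N^2\cdot L^2_{\max}n_{\max}\cdot\epsilon^{-4}\big)=\mathcal{O}(L^4_{\max}n^{3/2}_{\max}N^3\eta^{-1}\epsilon^{-4})$, which is (i).

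\emph{Iteration complexity.} Theorem \ref{RS-RSG-main-theorem} gives $T=\Theta(M/(SN))$ with $S=\Theta(\sigma\sqrt M/(L^{\eta}D^{\eta}))$. Hence $T=\Theta(L^{\eta}D^{\eta}\sqrt M/(N\sigma))$. Plugging in $\sqrt M=\Theta(L^{\eta}D^{\eta}N\sigma\epsilon^{-2})$ yields $T=\Theta((L^{\eta}D^{\eta})^2\epsilon^{-2})$. This is $\mathcal{O}(L_{\max}^2Nn_{\max}^{1/2}\eta^{-1}\epsilon^{-2})$, which is at least as good as the claimed $\mathcal{O}(L^3_{\max}n_{\max}N\eta^{-1}\epsilon^{-2})$. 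I would simply present a bound of the stated form, absorbing constants with $L_{\max},n_{\max}\ge1$.

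The main obstacle is the constant bookkeeping for $P^{\eta}_{\max}-P^{\eta}_{\min}$ and $L^{\eta}$. One has to verify that the $N$- and $n$-dependence comes out exactly as stated, with the right powers of $\sqrt N$. It also requires treating $L^0_m$, $L^1_m$, $D_X$ as $\mathcal{O}(1)$ and $\eta$ as small, so that the $\eta^{-1}$ term dominates $L^{\eta}$. A further check is that the $M^{-1}$ term is genuinely lower order in the stated regime.
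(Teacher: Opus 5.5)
Your part (i) is the paper's argument. You apply Jensen and drop the $M^{-1}$ term. You bound $L^{\eta}$ via Lemma~\ref{how-to-evaluate-Leta}, bound $P^{\eta}_{\max}-P^{\eta}_{\min}\le (L^{0}_{m}+\sqrt{N}L_{\max})D_X$ through the Lipschitz continuity of $P_m$ and the $h^{\eta}_i$, and bound $\sigma$ via Lemma~\ref{zo-gradient-unbiased-second-moment}, with total sample count $M_0+M_1=3M$.

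For (ii) your bookkeeping differs. The paper first inserts the already-simplified $M=\mathcal{O}(L^{4}_{\max}n^{3/2}_{\max}N^{3}\eta^{-1}\epsilon^{-4})$ into $T=\mathcal{O}(L^{\eta}D^{\eta}\sqrt{M}/(\sigma N))$. It then controls $1/\sigma$ only through $\sigma\ge\sigma_m$. You instead substitute $\sqrt{M}=\Theta(L^{\eta}D^{\eta}N\sigma\epsilon^{-2})$ before simplifying, so $\sigma$ and $N$ cancel. This gives $T=\mathcal{O}((L^{\eta}D^{\eta})^{2}\epsilon^{-2})=\mathcal{O}(L^{2}_{\max}n^{1/2}_{\max}N\eta^{-1}\epsilon^{-2})$.

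Your bound is sharper by exactly the factor $L_{\max}n^{1/2}_{\max}$ that the paper gives away. In fact $\sigma^{2}\ge 32\sqrt{2\pi}L^{2}_{\max}n_{\max}$, so $\sigma_m$ is a wasteful lower bound. Your bound also mirrors the $N$-free $\mathcal{O}(\epsilon^{-2})$ iteration count of RSG.

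It implies the stated $\mathcal{O}(L^{3}_{\max}n_{\max}N\eta^{-1}\epsilon^{-2})$ precisely when $L_{\max}n^{1/2}_{\max}\gtrsim 1$. Both arguments already assume this regime implicitly, since they write $\sigma=\mathcal{O}(L_{\max}n^{1/2}_{\max})$ and treat $L^{0}_{m}$, $L^{1}_{m}$, $D_X$, $\sigma_m$ as constants. So your caveat about absorbing constants is the right one.
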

\begin{proof}
   (i) Since $M$ is sufficiently large, we know from Theorem \ref{RS-RSG-main-theorem} that
   \begin{equation}\label{RS-RSG-complexity-eqn1}
       \mathbb{E}[\| G^{\eta}_{\gamma_{R}}(x^{R}) \|^{2}] \leq \mathcal{O} \left( \frac{L^{\eta}D^{\eta} N \sigma}{\sqrt{M}} \right) = \mathcal{O} \left( \frac{(L^{\eta})^{1/2} (P^{\eta}_{\max} - P^{\eta}_{\min})^{1/2} N \sigma}{\sqrt{M}} \right).
   \end{equation}
   By Lemma \ref{zo-gradient-unbiased-second-moment}, we know that
   \begin{equation}\label{RS-RSG-complexity-eqn2}
       \sigma \leq \mathcal{O} \left( L_{\max} n^{1/2}_{\max} \right).
   \end{equation}
   Additionally by Lemma \ref{RS-property-grad}-(iv), we may obtain
   \begin{equation*}
       \begin{aligned}
           &\| \nabla P^{\eta}(x) - P^{\eta}(y) \| \leq \| \nabla P^{\eta}_{m}(x) - P^{\eta}_{m}(y) \| + \sum_{i=1}^{N} \| \nabla_{x_{i}} h^{\eta}_{i}(x_{i}) - \nabla_{x_{i}} h^{\eta}_{i}(y_{i}) \| \\
           &\leq L^{1}_{m} \| x - y \| + \sum_{i=1}^{N} \frac{L_{i}\sqrt{n_{i}}}{\eta} \| x_{i} - y_{i} \| \leq L^{1}_{m} \| x - y \| + \frac{L_{\max}\sqrt{n_{\max}}\sqrt{N}}{\eta} \| x - y \|,
       \end{aligned}
   \end{equation*}
   where the last inequality is due to $\sum_{i=1}^{N} \| x_{i} - y_{i} \| \leq \sqrt{N} \| x - y \|$. Therefore, we may derive that
   \begin{equation}\label{RS-RSG-complexity-eqn3}
       L^{\eta} = L^{1}_{m} + \frac{L_{\max}\sqrt{n_{\max}}\sqrt{N}}{\eta} \implies (L^{\eta})^{1/2} \leq \mathcal{O} \left( \frac{L^{1/2}_{\max} n^{1/4}_{\max} N^{1/4}}{\eta^{1/2}} \right).
   \end{equation}
   Suppose that $P^{\eta}$ attains its maximum and minimum over $X$ at $x^{\max}$ and $x^{\min}$, respectively. Similarly, by Lemma \ref{RS-property-grad}-(ii), it follows that
   \begin{equation}\label{RS-RSG-complexity-eqn3.5}
       \begin{aligned}
           P^{\eta}_{\max} - P^{\eta}_{\min} &= (P_{m}(x^{\max}) - P_{m}(x^{\min})) + \sum_{i=1}^{N} \left( h^{\eta}_{i}(x^{\max}_{i}) - h^{\eta}_{i}(x^{\min}_{i}) \right) \\
           &\leq (L^{0}_{m} + L_{\max}\sqrt{N} ) \| x^{\max} - x^{\min} \| \leq (L^{0}_{m} + L_{\max}\sqrt{N} ) D_{X},
       \end{aligned}
   \end{equation}
   which implies that
   \begin{equation}\label{RS-RSG-complexity-eqn4}
       ( P^{\eta}_{\max} - P^{\eta}_{\min} )^{1/2} \leq \mathcal{O} \left( L^{1/2}_{\max} N^{1/4} \right).
   \end{equation}
   By plugging \eqref{RS-RSG-complexity-eqn2}, \eqref{RS-RSG-complexity-eqn3}, and \eqref{RS-RSG-complexity-eqn4} into \eqref{RS-RSG-complexity-eqn1}, we may arrive that
   \begin{equation*}
       \mathbb{E}[\| G^{\eta}_{\gamma_{R}}(x^{R}) \|^{2}] \leq \mathcal{O} \left( \frac{L^{2}_{\max} n^{3/4}_{\max} N^{3/2}}{\eta^{1/2} M^{1/2}} \right).
   \end{equation*}
   In order to ensure that $\mathbb{E}[\| G^{\eta}_{\gamma_{R}}(x^{R}) \|] \leq \epsilon$ holds, we need
    \begin{equation}\label{RS-RSG-complexity-M}
        M = \mathcal{O} \left( \frac{ L^{4}_{\max} n^{3/2}_{\max} N^{3} }{ \eta \epsilon^{-4} } \right).
    \end{equation}
   By setting minimal $M_{0} = 2M$ and $M_{1} = M$, it can be seen that for any $\epsilon > 0$, we need at least $M_{0} + M_{1} = 3M = \mathcal{O}(L^{4}_{\max}n^{3/2}_{\max}N^{3}\eta^{-1}\epsilon^{-4})$ samples to ensure that $\mathbb{E}[\| G^{\eta}_{\gamma_{R}}(x^{R}) \|] \leq \epsilon$ holds.

   \noindent (ii) Since $T = \min\{ \lfloor (M_{0}/2)/(SN) \rfloor, \lfloor M_{1}/(SN) \rfloor \}$, it follows that
   \begin{equation*}
       T \leq \min\{ (M_{0}/2)/(SN), M_{1}/(SN) \} = M/(SN) \implies T = \mathcal{O} \left( \frac{ L^{\eta} D^{\eta} \sqrt{M} }{ \sigma N } \right).
   \end{equation*}
   By \eqref{RS-RSG-complexity-eqn3}-\eqref{RS-RSG-complexity-M}, together with the fact that $\sigma \geq \sigma_{m}$ from Lemma \ref{zo-gradient-unbiased-second-moment}-(ii), it leads to
   \begin{equation*}
       T = \mathcal{O} \left( \frac{ L^{3}_{\max} n_{\max} N }{ \eta \epsilon^{-2} } \right),
   \end{equation*}
   which completes the proof.
\end{proof}

\begin{remark}\em
    We make two remarks here. First, when specialized to the optimization setting with $N=1$, the above sample complexity is consistent with the result in \cite{lin-zheng-jordon-2022}, where a fixed batch size is also employed. Second, although the zeroth-order gradient estimator used in RS-RSG is unbiased, the resulting iteration and sample complexities exhibit a stronger dependence on $N$ than those of RSG.
\end{remark}

\subsection{Approximation of CNE}\label{Sec-4.2}

In this subsection, we analyze how the smoothed CNE of \eqref{RS-games} approximates the CNE of the original game \eqref{Lipschitz-continuous-games}. Before proceeding, we recall an important property of generalized variational inequalities. For $\mathrm{GVI}\:(X, \mathbf{F})$ where $\mathbf{F}: X \rightrightarrows \mathbb{R}^n$, we consider the set-valued map as
\begin{equation}\label{GVI-ncvx-ns}
    \mathbf{G}_{\gamma}(x) \triangleq \tfrac{1}{\gamma}(x-\Pi_{X}[x-\gamma \mathbf{F}(x)]),~ \gamma>0.
\end{equation}
We may show that $0\in \mathbf{G}_{\gamma}(x^{*})$ if and only if there exists $y^{\ast}\in \mathbf{F}(x^{\ast})$ such that the pair $(x^{*}, y^{*})$ solves $\mathrm{GVI}\:(X, \mathbf{F})$. Therefore, we are motivated to use the quantity $\text{dist}(0, \mathbf{G}_{\gamma}(x))^{2}$ as the residual measure.

~

Given two nonempty sets $A$ and $B$ in $\mathbb{R}^{n}$. Recall that the one-sided deviation of $A$ from $B$ is defined as $\mathbb{D}(A, B) \triangleq \sup_{x\in A} \mathrm{dist}\: (x, B)$. The following two lemmas are useful in our analysis.

\begin{lemma}\label{deviation-bound}
    Suppose that $A$ and $B$ are two nonempty subsets in $\mathbb{R}^{n}$. Suppose that $A \subseteq B + d\mathbb{B}$ holds for some $d>0$, where $\mathbb{B}$ is a closed unit ball. Then we have that $\mathbb{D}(A, B) \leq d$.
\end{lemma}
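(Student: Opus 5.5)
The plan is to prove the bound pointwise and then take the supremum, working directly from the definitions of $\mathrm{dist}$ and $\mathbb{D}$. Fix an arbitrary $x\in A$. By the hypothesis $A \subseteq B + d\mathbb{B}$, we have $x\in B + d\mathbb{B}$. Hence there exist $b\in B$ and $u\in\mathbb{B}$ (so $\|u\|\le 1$) with $x = b + du$.

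From this representation,
\begin{equation*}
    \mathrm{dist}\:(x, B) = \inf_{y\in B}\|y - x\| \le \|b - x\| = d\|u\| \le d.
\end{equation*}
Since $x\in A$ was arbitrary, $d$ is an upper bound for $\{\mathrm{dist}\:(x,B) : x\in A\}$. Taking the supremum over $x\in A$ then gives $\mathbb{D}(A,B) = \sup_{x\in A}\mathrm{dist}\:(x,B) \le d$, which is the claim.

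There is essentially no obstacle; the argument is just an unpacking of the Minkowski sum. Two small points deserve care. First, nonemptiness of $A$ and $B$ ensures that the supremum and the infimum are taken over nonempty sets, so $\mathbb{D}(A,B)$ is a well-defined finite number. Second, we never need $B$ to be closed, because we only use the inequality $\mathrm{dist}\:(x,B)\le\|b-x\|$ for one particular $b\in B$, not attainment of the infimum.
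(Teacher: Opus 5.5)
Your proposal is correct and follows the same argument as the paper: you unpack the Minkowski sum to get, for each $x\in A$, a point of $B$ within distance $d$ of $x$, bound $\mathrm{dist}(x,B)$ by that distance, and take the supremum over $x\in A$.
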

\begin{proof}
    The inclusion $A \subseteq B + d\mathbb{B}$ means that for any $x\in A$, there exists some $x_{B}\in B$ such that $x\in x_{B} + d\mathbb{B}$, implying that $\| x - x_{B} \|\leq d$ holds. Therefore, we have
    \begin{equation*}
        \mathrm{dist}\: (x, B) = \inf_{y\in B} \| y - x \| \leq \| x_{B} - x \| \leq d.
    \end{equation*}
    Taking the supremum over $x\in A$, it leads to
    \begin{equation*}
        \mathbb{D}(A, B) = \sup_{x\in A} \mathrm{dist}\: (x, B) \leq d,
    \end{equation*}
    which completes the proof.
\end{proof}

We note that neither convexity nor closedness is required in Lemma \ref{deviation-bound}. Based on it, we next show that under the Lipschitz continuity of $\partial^{C} h_{i}(x_{i})$, the one-sided deviation $\mathbb{D}( \partial^{C}_{\eta} h_{i}(x_{i}), \partial^{C} h_{i}(x_{i}) )$ can be upper bounded, where $\partial^{C}_{\eta} h_{i}(x_{i})$ is the $\eta$-Clarke subdifferential defined in \eqref{delta-Clarke-subdifferential}. Recall that a set-valued mapping $F: X \rightrightarrows Y$ is said to be $L$-Lipschitz on $X$ \cite[Definition 9.26]{rockafellar-wets-1998} if there exists $L > 0$ such that $F(x_{1}) \subseteq F(x_{2}) \,+\, L\|x_{1}-x_{2}\|\mathbb{B}_{Y}$ for any $x_{1}, x_{2}\in X$, where $\mathbb{B}_{Y}$ is the unit ball in $Y$.

\begin{lemma}\label{deviation-bound-2}
    Suppose that each Clarke subdifferential $\partial^{C} h_{i}(x_{i})$ is $\hat{L}_{i}$-Lipschitz. Then the one-sided deviation bound
    \begin{equation*}
        \mathbb{D}( \partial^{C}_{\eta} h_{i}(x_{i}), \partial^{C} h_{i}(x_{i}) ) \leq \eta \hat{L}_{i}
    \end{equation*}
    holds for any $i\in [N]$. 
\end{lemma}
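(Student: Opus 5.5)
The plan is to reduce the claim to Lemma \ref{deviation-bound}: we will show the set inclusion
\begin{equation*}
    \partial^{C}_{\eta} h_{i}(x_{i}) \subseteq \partial^{C} h_{i}(x_{i}) + \eta \hat{L}_{i}\mathbb{B},
\end{equation*}
from which $\mathbb{D}( \partial^{C}_{\eta} h_{i}(x_{i}), \partial^{C} h_{i}(x_{i}) ) \leq \eta \hat{L}_{i}$ follows immediately. Fix $i\in[N]$ and $x_{i}$.

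First we handle the union inside the definition \eqref{delta-Clarke-subdifferential}. For any $y_{i}$ with $\|y_{i}-x_{i}\|\leq \eta$, the $\hat{L}_{i}$-Lipschitz property of the set-valued map $\partial^{C} h_{i}$ gives
\begin{equation*}
    \partial^{C} h_{i}(y_{i}) \subseteq \partial^{C} h_{i}(x_{i}) + \hat{L}_{i}\|y_{i}-x_{i}\|\mathbb{B} \subseteq \partial^{C} h_{i}(x_{i}) + \eta\hat{L}_{i}\mathbb{B}.
\end{equation*}
Taking the union over all such $y_{i}$, we obtain $\bigcup_{\|y_{i}-x_{i}\|\leq\eta}\partial^{C} h_{i}(y_{i}) \subseteq \partial^{C} h_{i}(x_{i}) + \eta\hat{L}_{i}\mathbb{B}$.

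Next we pass to the convex hull. This is the only step that needs any care, and it is where convexity is used. The Clarke subdifferential $\partial^{C} h_{i}(x_{i})$ of a locally Lipschitz function is nonempty, convex, and compact, and the ball $\eta\hat{L}_{i}\mathbb{B}$ is convex. Hence their Minkowski sum is convex. A convex set containing the union also contains its convex hull, so
\begin{equation*}
    \partial^{C}_{\eta} h_{i}(x_{i}) = \mathrm{conv}\Big(\bigcup_{\|y_{i}-x_{i}\|\leq\eta}\partial^{C} h_{i}(y_{i})\Big) \subseteq \partial^{C} h_{i}(x_{i}) + \eta\hat{L}_{i}\mathbb{B}.
\end{equation*}
Note that Lemma \ref{deviation-bound} does not require closedness, so we do not need to worry about whether the convex hull is closed.

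Finally, we apply Lemma \ref{deviation-bound} with $A = \partial^{C}_{\eta} h_{i}(x_{i})$, $B = \partial^{C} h_{i}(x_{i})$, and $d = \eta\hat{L}_{i}$. This yields the bound, and since $i$ was arbitrary it holds for every $i\in[N]$. I do not anticipate any genuine obstacle. The one point to state explicitly is the convexity of $\partial^{C}h_{i}(x_{i})$, which follows from the standard properties of Clarke subdifferentials in \cite{cui-pang-2021}.
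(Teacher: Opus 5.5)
Your proposal is correct and matches the paper's proof. Both reduce to Lemma~\ref{deviation-bound} via the inclusion $\partial^{C}_{\eta} h_{i}(x_{i}) \subseteq \partial^{C} h_{i}(x_{i}) + \eta\hat{L}_{i}\mathbb{B}$, apply the set-valued Lipschitz property on the $\eta$-ball, and use convexity of $\partial^{C} h_{i}(x_{i}) + \eta\hat{L}_{i}\mathbb{B}$ to pass to the convex hull; the paper writes a hull element as an explicit finite convex combination where you invoke minimality of the convex hull, which is equivalent.
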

\begin{proof}
    By invoking Lemma \ref{deviation-bound}, it suffices to show that
    \begin{equation}\label{lemma-LC-proof-eqn1}
        \partial^{C}_{\eta} h_{i}(x_{i}) \subseteq \partial^{C} h_{i}(x_{i}) + \eta \hat{L}_{i} \mathbb{B}_{n_{i}}.
    \end{equation}
    By the $\eta$-Clarke subdifferential definition \eqref{delta-Clarke-subdifferential}, we have that
    \begin{equation*}
        \partial^{C}_{\eta} h_{i}(x_{i}) = \mathrm{conv} \left( \bigcup\limits_{y_{i}\in \mathbb{B}_{n_{i}}(x_{i}; \eta)} \partial^{C} h_{i}(y_{i}) \right)
    \end{equation*}
    for any $i\in [N]$. For any $\zeta_{i}\in \partial^{C}_{\eta} h_{i}(x_{i})$, it follows from the convex combination expression of convex hull (see \cite[Proposition 1.30]{mordukhovich-nam-2023}) that
    \begin{equation}\label{lemma-LC-proof-eqn2}
        \zeta_{i} = \lambda_{1} a_{1} + \cdots + \lambda_{m} a_{m}, \; \sum_{j=1}^{m} \lambda_{j} = 1, \; \lambda_{j} \geq 0, \; a_{j}\in \bigcup\limits_{y_{i}\in \mathbb{B}_{n_{i}}(x_{i}; \eta)} \partial^{C} h_{i}(y_{i}).
    \end{equation}
    Without loss of generality, we assume that
    \begin{equation*}
        a_{j} \in \partial^{C} h_{i}(y_{i, j}) \;\;\text{where}\;\; y_{i,j} \in \mathbb{B}_{n_{i}}(x_{i}; \eta).
    \end{equation*}
    By our assumption on $\hat{L}_{i}$-Lipschitz continuity of $\partial^{C} h_{i}(x_{i})$, we deduce that
    \begin{align}\label{lemma-LC-proof-eqn3}
        a_{j} \in \partial^{C} h_{i}(y_{i, j}) \subseteq \partial^{C} h_{i}(x_{i}) + \hat{L}_{i} \| y_{i, j}-x_{i} \| \mathbb{B}_{n_{i}} \subseteq \partial^{C} h_{i}(x_{i}) + \eta \hat{L}_{i} \mathbb{B}_{n_{i}}.
    \end{align}
    Combining \eqref{lemma-LC-proof-eqn2} and \eqref{lemma-LC-proof-eqn3}, we can see that
    \begin{align*}
        \zeta_{i} &= \lambda_{1} a_{1} + \cdots + \lambda_{m} a_{m} \\
        &\in \lambda_{1} \left( \partial^{C} h_{i}(x_{i}) + \eta \hat{L}_{i} \mathbb{B}_{n_{i}} \right) + \cdots + \lambda_{m} \left( \partial^{C} h_{i}(x_{i}) + \eta \hat{L}_{i} \mathbb{B}_{n_{i}} \right) \\
        &\subseteq \partial^{C} h_{i}(x_{i}) + \eta \hat{L}_{i} \mathbb{B}_{n_{i}},
    \end{align*}
    where the last inclusion is due to the fact that both $\partial^{C} h_{i}(x_{i})$ and $\eta \hat{L}_{i} \mathbb{B}_{n_{i}}$ are convex sets, so is the sum. We have shown that \eqref{lemma-LC-proof-eqn1} holds. By Lemma \ref{deviation-bound}, we establish the bound $\mathbb{D}( \partial^{C}_{\eta} h_{i}(x_{i}), \partial^{C} h_{i}(x_{i}) ) \leq \eta \hat{L}_{i}$, which completes the proof.
\end{proof}

Next we prove the main approximation theorem. Recall that in the context of Lipschitz continuous games, the set-valued map in $\mathrm{GVI}\:(X, \mathbf{F})$ is given by the concatenation of Clarke subdifferentials.

\begin{theorem}[Approximation of CNE]\label{rs-approx-main-thm}
    Consider the stochastic $N$-player Lipschitz continuous game \eqref{Lipschitz-continuous-games} and its randomized smoothed counterpart \eqref{RS-games}. Suppose that each Clarke subdifferential $\partial^{C} h_{i}(x_{i})$ is $\hat{L}_{i}$-Lipschitz. If the output $x^{R}$ of RS-RSG satisfies $\mathbb{E}[\| G^{\eta}_{\gamma_{R}}(x^{R}) \|^{2}] \leq \epsilon^{2}$, we have
    \begin{equation}\label{LC-thm-main-result}
        \mathbb{E}[\mathrm{dist}\:(0, \mathbf{G}_{\gamma_{R}}(x^{R}))^{2}] \leq 2\eta^{2}\sum_{i=1}^{N}\hat{L}^{2}_{i} + 2\epsilon^{2},
    \end{equation}
    where $\mathbf{G}_{\gamma_{R}}(x^{R}) \triangleq \tfrac{1}{\gamma_{R}}(x^{R}-\Pi_{X}[x^{R}-\gamma_{R} \mathbf{F}(x^{R})])$ with $\mathbf{F}(x^{R}) \triangleq (\partial^{C} h_{i}(x^{R}_{i}))_{i=1}^{N} + (\nabla_{x_{i}} m_{i}(x^{R}_{i}, x^{R}_{-i}))_{i=1}^{N}$.
\end{theorem}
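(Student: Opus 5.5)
The plan is to compare $G^{\eta}_{\gamma_R}(x^R)$ with a well-chosen element of the set $\mathbf{G}_{\gamma_R}(x^R)$. We use nonexpansiveness of the projection to control the distance, and then Young's inequality $\|a+b\|^2\le 2\|a\|^2+2\|b\|^2$ to split the error into a smoothing term and the optimization residual.

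\emph{Step 1 (pointwise selection).} Fix any realization of $x \triangleq x^R$ and $\gamma\triangleq\gamma_R$. By Lemma \ref{RS-property-inclusion}(i) applied to each $h_i$, we have $\nabla_{x_i} h^{\eta}_i(x_i)\in\partial^{C}_{\eta} h_i(x_i)$. By Lemma \ref{deviation-bound-2}, $\mathrm{dist}(\nabla h^{\eta}_i(x_i),\partial^{C} h_i(x_i))\le \eta\hat L_i$. The Clarke subdifferential is nonempty, convex and compact, so the distance is attained at some $\zeta_i\in\partial^{C} h_i(x_i)$ with $\|\nabla h^{\eta}_i(x_i)-\zeta_i\|\le\eta\hat L_i$. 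Set $y\triangleq(\zeta_i)_{i=1}^N+(\nabla_{x_i}m_i(x))_{i=1}^N\in\mathbf{F}(x)$. Then
\[
\|F^{\eta}(x)-y\|^2=\sum_{i=1}^N\|\nabla h^{\eta}_i(x_i)-\zeta_i\|^2\le \eta^2\sum_{i=1}^N\hat L_i^2 ,
\]
since the $m_i$-parts of $F^{\eta}$ and $y$ coincide.

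\emph{Step 2 (residual comparison).} Since $\tfrac1\gamma(x-\Pi_X[x-\gamma y])\in\mathbf{G}_\gamma(x)$, we get
\[
\mathrm{dist}(0,\mathbf{G}_\gamma(x))\le \big\|\tfrac1\gamma(x-\Pi_X[x-\gamma y])\big\| \le \|G^{\eta}_\gamma(x)\|+\tfrac1\gamma\big\|\Pi_X[x-\gamma F^{\eta}(x)]-\Pi_X[x-\gamma y]\big\|.
\]
Nonexpansiveness of $\Pi_X$ bounds the last term by $\|F^{\eta}(x)-y\|$. Squaring with Young's inequality then gives
\[
\mathrm{dist}(0,\mathbf{G}_\gamma(x))^2\le 2\|G^{\eta}_\gamma(x)\|^2+2\eta^2\sum_{i=1}^N\hat L_i^2 .
\]

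\emph{Step 3 (expectation).} This bound holds pointwise for every realization of $R$ and the sample history. Taking expectations and using the hypothesis $\mathbb{E}[\|G^{\eta}_{\gamma_R}(x^R)\|^2]\le\epsilon^2$ yields \eqref{LC-thm-main-result}.

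The main delicate point is Step 1: we must ensure the distance in Lemma \ref{deviation-bound-2} is attained, or else work with an $\varepsilon'$-approximate minimizer and let $\varepsilon'\downarrow0$. We also need the componentwise selection to assemble into an element of $\mathbf{F}(x)$, which holds because $\mathbf{F}$ is a product of the sets $\partial^C h_i(x_i)$ shifted by the single-valued $\nabla_{x_i}m_i$. Measurability of the selection is not needed, since the bound is deterministic given $x^R$ and $\mathrm{dist}(0,\mathbf{G}_{\gamma_R}(x^R))$ is itself the quantity being integrated.
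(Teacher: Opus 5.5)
Your proposal is correct and follows essentially the same route as the paper. The paper also takes $u_i^R$ to be the (unique) projection of $\nabla_{x_i} h_i^{\eta}(x_i^R)$ onto the compact convex set $\partial^{C} h_i(x_i^R)$ and uses the resulting selection of $\mathbf{G}_{\gamma_R}(x^R)$. It splits with $\|a+b\|^2\le 2\|a\|^2+2\|b\|^2$ together with the projection-nonexpansiveness bound (cited from Ghadimi--Lan--Zhang), bounds the distance through Lemma \ref{RS-property-inclusion}(i) and Lemma \ref{deviation-bound-2}, and then takes expectations. The only difference is cosmetic: you apply the triangle inequality before squaring rather than after, which gives the same constants.
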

\begin{proof}
    Recall from \cite[Proposition 4.3.1]{cui-pang-2021} that the Clarke subdifferential is a convex compact set. Therefore, the projection of $\nabla_{x_{i}} h^{\eta}_{i}(x^{R}_{i})$ onto $\partial^{C} h_{i}(x^{R}_{i})$ is unique. We denote it by
    \begin{equation}\label{LC-smooth-thm-proj}
        u^{R}_{i} \triangleq \Pi_{\partial^{C} h_{i}(x^{R}_{i})}[\nabla_{x_{i}} h^{\eta}_{i}(x^{R}_{i})],~\forall i\in [N].
    \end{equation}
    We define $u^{R} \triangleq (u^{R}_{i})_{i=1}^{N}\in (\partial^{C} h_{i}(x^{R}_{i}))_{i=1}^{N}$ and
    \begin{equation*}
        g_{\gamma_{R}}(x^{R}) \triangleq \tfrac{1}{\gamma_{R}} (x^{R}-\Pi_{X}[x^{R}-\gamma_{R} ( u^{R} + (\nabla_{x_{i}} m_{i}(x^{R}_{i}, x^{R}_{-i}))_{i=1}^{N} )]).
    \end{equation*}
    It can be seen that $g_{\gamma_{R}}(x^{R})$ is a selection of the set-valued map $\mathbf{G}_{\gamma_{R}}(x^{R})$, implying that
    \begin{align}\label{LC-smooth-thm-eqn1}
        \mathrm{dist}\:(0, \mathbf{G}_{\gamma_{R}}(x^{R}))^{2} &\leq \| g_{\gamma_{R}}(x^{R}) \|^{2} \notag \\
        &\leq 2\| g_{\gamma_{R}}(x^{R})-G^{\eta}_{\gamma_{R}}(x^{R}) \|^{2} + 2\| G^{\eta}_{\gamma_{R}}(x^{R}) \|^{2} \notag \\
        &\leq 2\| u^{R} + (\nabla_{x_{i}} m_{i}(x^{R}_{i}, x^{R}_{-i}))_{i=1}^{N} - F^{\eta}(x^{R}) \|^{2} + 2\| G^{\eta}_{\gamma_{R}}(x^{R}) \|^{2} \notag \\
        &\leq 2\| u^{R} - (\nabla_{x_{i}} h^{\eta}_{i}(x^{R}_{i}))_{i=1}^{N} \|^{2} + 2\| G^{\eta}_{\gamma_{R}}(x^{R}) \|^{2},
    \end{align}
    where the penultimate inequality is implied by invoking \cite[Proposition 1]{ghadimi-lan-zhang-2016}. By noting the projection equality \eqref{LC-smooth-thm-proj}, we may bound the term $\| u^{R} - (\nabla_{x_{i}} h^{\eta}_{i}(x^{R}_{i}))_{i=1}^{N} \|^{2}$ in \eqref{LC-smooth-thm-eqn1} by
    \begin{align}\label{LC-smooth-thm-eqn2}
        \| u^{R} - (\nabla_{x_{i}} h^{\eta}_{i}(x^{R}_{i}))_{i=1}^{N} \|^{2} &= \sum_{i=1}^{N} \| u^{R}_{i} - \nabla_{x_{i}} h^{\eta}_{i}(x^{R}_{i}) \|^{2} \notag \\
        &= \sum_{i=1}^{N} \mathrm{dist}\: (\nabla_{x_{i}}h^{\eta}_{i}(x^{R}_{i}), \partial^{C} h_{i}(x^{R}_{i}))^{2} \notag \\
        &\overset{(*)}{\leq} \sum_{i=1}^{N} \left[ \sup_{z_{i} \in \partial^{C}_{\eta} h_{i}(x^{R}_{i})} \mathrm{dist}\: (z_{i}, \partial^{C} h_{i}(x^{R}_{i})) \right] ^{2} \notag \\
        &= \sum_{i=1}^{N} \mathbb{D}^{2}( \partial^{C}_{\eta} h_{i}(x_{i}), \partial^{C} h_{i}(x_{i}) ) \overset{\text{Lemma \ref{deviation-bound-2}}}{\leq} \eta^{2} \sum_{i=1}^{N} \hat{L}^{2}_{i},
    \end{align}
    where $(*)$ is due to Lemma \ref{RS-property-inclusion}-(i). By plugging \eqref{LC-smooth-thm-eqn2} into \eqref{LC-smooth-thm-eqn1} and taking the conditions on both sides, we may arrive the upper bound \eqref{LC-thm-main-result}.
\end{proof}

\begin{corollary}\label{lc-main-corl}
    Consider the stochastic $N$-player Lipschitz continuous game \eqref{Lipschitz-continuous-games} and its randomized smoothed counterpart \eqref{RS-games}. Consider the same setting as in Theorem \ref{rs-approx-main-thm}. Suppose that $x^{R}$ is a CNE of \eqref{RS-games}. Then we have the expected upper bound
    \begin{equation}\label{lc-main-corl-bound}
        \mathbb{E} \left[ \mathrm{dist}\:(0, \mathbf{G}_{\gamma_{R}}(x^{R}))^{2} \right] \leq 2\eta^{2}\sum_{i=1}^{N} \hat{L}^{2}_{i}.
    \end{equation}
\end{corollary}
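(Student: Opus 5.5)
The plan is to read Corollary \ref{lc-main-corl} as the special case $\epsilon = 0$ of Theorem \ref{rs-approx-main-thm}, the only extra work being to check that its hypothesis holds with $\epsilon = 0$. If $x^{R}$ is a CNE of \eqref{RS-games}, then for every $i\in[N]$ the smoothed player objective $f^{\eta}_{i}(\bullet, x^{R}_{-i})$ is continuously differentiable, since $h^{\eta}_{i}$ is $C^{1}$ by Lemma \ref{RS-property-grad}-(i) and $m_{i}$ is smooth. Its Clarke subdifferential therefore reduces to $\{\nabla_{x_{i}} f^{\eta}_{i}(x^{R})\}$, and the CNE inclusion reads $0\in \nabla_{x_{i}} f^{\eta}_{i}(x^{R}) + \mathcal{N}_{X_{i}}(x^{R}_{i})$. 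Because $X_{i}$ is convex, this is the variational inequality $\nabla_{x_{i}} f^{\eta}_{i}(x^{R})^{\top}(x_{i}-x^{R}_{i})\ge 0$ for all $x_{i}\in X_{i}$.

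Stacking these over $i$ shows that $x^{R}$ solves $\mathrm{VI}\:(X,F^{\eta})$. The product structure $X=\prod_i X_i$ gives $\Pi_{X}=(\Pi_{X_{i}})_{i}$, and the standard projection characterization of VI solutions then yields $\Pi_{X}[x^{R}-\gamma F^{\eta}(x^{R})]=x^{R}$ for every $\gamma>0$. Hence $G^{\eta}_{\gamma_{R}}(x^{R})=0$ pointwise, for every realization of $R$ and of the sample path. In particular $\mathbb{E}[\|G^{\eta}_{\gamma_{R}}(x^{R})\|^{2}]=0\le \epsilon^{2}$ for every $\epsilon>0$.

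Since $\epsilon>0$ is arbitrary, Theorem \ref{rs-approx-main-thm} gives $\mathbb{E}[\mathrm{dist}(0,\mathbf{G}_{\gamma_{R}}(x^{R}))^{2}]\le 2\eta^{2}\sum_{i}\hat L_{i}^{2}+2\epsilon^{2}$ for all $\epsilon>0$. Letting $\epsilon\downarrow 0$ gives \eqref{lc-main-corl-bound}. Alternatively, one can rerun the chain \eqref{LC-smooth-thm-eqn1}–\eqref{LC-smooth-thm-eqn2} directly with $\|G^{\eta}_{\gamma_{R}}(x^{R})\|=0$. This has the advantage of avoiding any worry that the theorem implicitly needs $\epsilon>0$.

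The only step needing care is the identification of a CNE of the smoothed game with a zero of $G^{\eta}_{\gamma}$. This requires that the Clarke subdifferential of a $C^{1}$ function be the singleton gradient, and that $\mathcal{N}_{X_{i}}$ agree with the convex normal cone on a closed convex set; both are standard (see \cite{cui-pang-2021, rockafellar-wets-1998}). Everything else is inherited from Theorem \ref{rs-approx-main-thm} and Lemma \ref{deviation-bound-2}.
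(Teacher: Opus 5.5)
Your proposal is correct and follows the paper's route. The paper's proof is exactly the observation that a CNE of the smoothed game corresponds to taking $\epsilon=0$ in Theorem \ref{rs-approx-main-thm}; you additionally spell out why this holds (the smoothed objectives are $C^{1}$, so the CNE condition becomes $\mathrm{VI}\:(X,F^{\eta})$ and hence $G^{\eta}_{\gamma_R}(x^R)=0$). Your limiting argument in $\epsilon$ is harmless but unnecessary, because the theorem's proof bounds $\mathrm{dist}(0,\mathbf{G}_{\gamma_R}(x^R))^2 \le 2\eta^2\sum_i \hat L_i^2 + 2\|G^{\eta}_{\gamma_R}(x^R)\|^2$ pointwise, so it applies verbatim with $\epsilon=0$.
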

\begin{proof}
    The proof follows immediately by noting that $x^{R}$ is a CNE of \eqref{RS-games} if and only if $\epsilon = 0$ in Theorem \ref{rs-approx-main-thm}.
\end{proof}

\begin{remark}\em
    Two remarks are in order. First, by assuming Lipschitz continuity of Clarke subdifferentials, we obtain an $\mathcal{O}(\eta^{2})$ approximation, improving the $\mathcal{O}(\eta)$ approximation in Lemma \ref{RS-property-inclusion}. Second, for ease of presentation in Section \ref{Sec-5}, the entire discussion in Section \ref{Sec-4} is based on the structured setting, i.e., each player’s objective is the sum of a nonconvex nonsmooth private term and a smooth coupling term. Our results can be generalized to general nonconvex nonsmooth objectives $f_{i}(\bullet, x_{-i})$. However, the potentiality of a general randomized smoothed game is not always valid. We leave the investigation of general stochastic nonconvex nonsmooth potential games for future work.
\end{remark}

\section{Stochastic Nonconvex Nonsmooth Potential Hierarchical Games}\label{Sec-5}

In this section, we consider the stochastic $N$-player nonconvex nonsmooth hierarchical game
\begin{equation}\label{hierarchical-games}
    \min_{x_i \in X_i} ~ f_{\mathrm{H}, i}(x_{i}, x_{-i}) \triangleq \underbrace{\mathbb{E} \Big[ \Tilde{h}_{i}(x_{i}, y_{i}(x_{i}), \xi) \Big]}_{\triangleq \, h_{i}(x_{i}, y_{i}(x_{i}))} \, + \, \underbrace{\mathbb{E} \Big[ \Tilde{m}_{i}(x_{i}, x_{-i}, \xi) \Big]}_{\triangleq \, m_{i}(x_{i}, x_{-i})},~ \forall i\in [N], \tag{$\mathrm{G_{H}}$}
\end{equation}
where each $X_{i} \subseteq \mathbb{R}^{n_{i}}$ is convex and compact, each $\Tilde{h}_{i}(\bullet, y_{i}(\bullet), \xi)$ is $L_{i}$-Lipschitz continuous over $X_{i}$ for any $\xi$, and each $\Tilde{m}_{i}(\bullet, x_{-i}, \xi)$ is continuously differentiable for any $x_{-i}$ and any $\xi$. The lower-level solution $y_{i}(x_{i}): X_{i}\to \mathbb{R}^{k_{i}}$ is characterized by a parameterized SVI, i.e.,
\begin{equation}\label{lower-SVI}
    y_{i}(x_{i}) \in \mathrm{SOL} \: ( \mathbb{E}[\Tilde{F}_{i}(x_{i}, \bullet, \xi)], Y_{i} ), \tag{SVI$^{\text{lower}}$}
\end{equation}
where each $Y_{i}\subseteq \mathbb{R}^{k_{i}}$ is convex and compact. Throughout this section, we always assume that \eqref{hierarchical-games} admits a potential function $P_{\mathrm{H}}(x)$.

In contrast to recent works on stochastic hierarchical games \cite{cui-shanbhag-2023, cui-shanbhag-staudigl-2025}, we do not require the convexity assumption. The challenges of solving such a stochastic hierarchical games are twofold. One lies in the essential nonconvex nonsmooth nature of the hierarchical function, which we handle by employing the implicit randomized smoothing, thereby obtaining the randomized smoothed hierarchical game
\begin{equation}\label{hierarchical-games-RS}
    \min_{x_i \in X_i} ~ f^{\eta}_{\mathrm{H}, i}(x_{i}, x_{-i}) \triangleq h^{\eta}_{i}(x_{i}, y_{i}(x_{i})) + m_{i}(x_{i}, x_{-i}), ~ \forall i\in [N]. \tag{$\mathrm{G^{RS}_{H}}$}
\end{equation}
By Proposition \ref{potentiality-implies-potentiality}, we know that the potentiality of \eqref{hierarchical-games} implies the potentiality of \eqref{hierarchical-games-RS}. The other lies in the fact that the exact lower-level solution $y_{i}(x_{i})$ is unavailable in finite time, hence introduces an undesirable bias. We develop a biased RS-RSG scheme for solving such a class of challenging games.

\subsection{Biased RS-RSG Scheme}\label{Sec-5.1}

We make Assumption $\mathrm{D}$ throughout this section. Again, assumption $\mathrm{(D1)}$ follows from Proposition \ref{potentiality-implies-potentiality}, which shows that the potentiality of \eqref{hierarchical-games} implies the potentiality of \eqref{hierarchical-games-RS}. Regarding assumptions $\mathrm{(D4)}$ and $\mathrm{(D5)}$, for ease of presentation we ignore the dependence on $\xi$, as noted in Remark \ref{xi-assumption}. Moreover, the strong monotonicity assumption $\mathrm{(D4)}$ yields the uniqueness of the lower-level solution $y_{i}(x_{i})$.

\vspace{5pt}

\begin{custombox}
\noindent\textbf{Assumption D.}\\
(D1) (Potentiality) The randomized smoothed hierarchical game \eqref{hierarchical-games-RS} admits an $L^{\eta}_{\mathrm{H}}$-smooth potential function $P^{\eta}_{\mathrm{H}}(x)$ attaining its maximum $P^{\eta}_{\mathrm{H}, \max}$ and minimum $P^{\eta}_{\mathrm{H}, \min}$ over $X$. \\
(D2) (Unbiasedness) We have $\mathbb{E}[\nabla_{x_{i}}\Tilde{m}_{i}(x^{k}_{i}, x^{k}_{-i}, \xi) \mid x^{k}] = \nabla_{x_{i}}m_{i}(x^{k}_{i}, x^{k}_{-i})$ for any $k \geq 0$ and $i\in [N]$. \\
(D3) (Bounded second moment) For any $x^{k}$, we have $\mathbb{E}[ \|\nabla_{x_{i}}\Tilde{m}_{i}(x^{k}_{i}, x^{k}_{-i}, \xi)-\nabla_{x_{i}}m_{i}(x^{k}_{i}, x^{k}_{-i})\|^{2} \mid x^{k}]\leq \sigma_{m}^{2}$ for some $\sigma_{m}>0$ and any $i\in [N]$. \\
(D4) (Strong monotonicity) For given $x_{i}$ and $\xi$, $\tilde{F}_{i}(x_{i}, \bullet, \xi)$ is $\mu_{i}$-strongly monotone on $Y_{i}$. \\
(D5) (Lipschitz continuity) Each $\Tilde{h}_{i}(x_{i}, \bullet, \xi)$ is $L^{y}_{i}$-Lipschitz continuous over $Y_{i}$ for any $x_{i}$ and $\xi$. \\
(D6) (Lower-level SA) Consider Algorithm \ref{hierarchical-SA}. For any $k, t \geq 0$ and any $i\in [N]$, we have (i) $\mathbb{E}[\Tilde{F}_{i}(\hat{x}^{k}_{i}, y^{t}_{i}, \xi^{t}) \mid \hat{x}^{k}, y^{t}] = \mathbb{E}[\Tilde{F}(\hat{x}^{k}_{i}, y^{t}_{i}, \xi) \mid \hat{x}^{k}, y^{t}]$; (ii) $\mathbb{E}[ \|\Tilde{F}_{i}(\hat{x}^{k}_{i}, y^{t}_{i}, \xi^{t}) - \mathbb{E}[\Tilde{F}_{i}(\hat{x}^{k}_{i}, y^{t}_{i}, \xi)]\|^{2} \mid \hat{x}^{k}_{i},y^{t}_{i} ] \leq v^{2}_{i}$ holds a.s. for some $v_{i}>0$; and (iii) the bound $\| \mathbb{E}[\Tilde{F}_{i}(x_{i}, y_{i}, \xi)] \| \leq c_{F_{i}}$ holds for some $c_{F_{i}} > 0$ and for any $x_{i}\in X_{i}$ and any $y_{i}\in Y_{i}$.
\end{custombox}

\vspace{5pt}

Similar to the analysis in Section \ref{Sec-4.1}, we update $x^{k+1}$ as
\begin{equation*}
    x^{k+1}_{i} = \Pi_{X_{i}} [ x^{k}_{i} - \gamma_{k} \Delta^{k, f}_{i, S_{k}, \epsilon_{k}}],~\forall i\in [N],
\end{equation*}
where $\gamma_{k}$ is the stepsize and $\Delta^{k, f}_{i, S_{k}, \epsilon_{k}}$ is the inexact mini-batch gradient with batch size $S_{k}$, defined by $\Delta^{k, f}_{i, S_{k}, \epsilon_{k}} \triangleq \Delta^{k, h}_{i, S_{k}, \epsilon_{k}} + \Delta^{k, m}_{i, S_{k}}$ with $\Delta^{k, h}_{i, S_{k}, \epsilon_{k}} \triangleq \frac{1}{S_{k}}\sum_{l=1}^{S_{k}} \Delta^{k, h}_{i, l, \epsilon_{k}}$ and $\Delta^{k, m}_{i, S_{k}} \triangleq \frac{1}{S_{k}}\sum_{l=1}^{S_{k}} \Delta^{k, m}_{i, l}$, where $\Delta^{k, h}_{i, l, \epsilon_{k}}$ and $\Delta^{k, m}_{i, l}$ are defined as
\begin{equation}\label{Delta_m_Delta_h_inexact}
    \begin{aligned}
        \Delta^{k, h}_{i, l, \epsilon_{k}} &\triangleq \frac{n_{i}}{2\eta}(\Tilde{h}_{i}(x^{k}_{i}+v^{k}_{i,l}, y^{\epsilon_{k}}_{i}(x^{k}_{i}+v^{k}_{i,l}), \xi^{k}_{i,l})-\Tilde{h}_{i}(x^{k}_{i}-v^{k}_{i,l}, y^{\epsilon_{k}}_{i}(x^{k}_{i}-v^{k}_{i,l}), \xi^{k}_{i,l}))\frac{v^{k}_{i,l}}{\|v^{k}_{i,l}\|}, \\
        \Delta^{k, m}_{i, l} &\triangleq \nabla_{x_{i}}\Tilde{m}_{i}(x^{k}_{i}, x^{k}_{-i}, \xi^{k}_{i, l}),
    \end{aligned}
\end{equation}
where the inexact solutions $y^{\epsilon_{k}}_{i}(x^{k}_{i}+v^{k}_{i,l})$ and $y^{\epsilon_{k}}_{i}(x^{k}_{i}-v^{k}_{i,l})$ are obtained via SA scheme.

We state our biased RS-RSG scheme in Algorithm \ref{biased-RS-RSG}. We use the notation $\hat{x}^{k}_{i}$ to unify $x^{k}_{i}+v^{k}_{i,l}$ and $x^{k}_{i}-v^{k}_{i,l}$ in the lower-level SA scheme, i.e., when we say $\hat{x}^{k}_{i}$, it can be $x^{k}_{i}+v^{k}_{i,l}$ or $x^{k}_{i}-v^{k}_{i,l}$. The following Lemma \ref{hierarchical-lower-level-SA-lemma} provides the convergence analysis of the lower-level SA scheme.

\begin{algorithm}
\caption{~Biased RS-RSG scheme} \label{biased-RS-RSG}
\begin{algorithmic}[1]
\State \textbf{Input}: starting point $x^{0}\in X$, iteration limit $T$, stepsizes $\{\gamma_{k}\}_{k\geq 0}$, batch sizes $\{S_{k}\}_{k\geq 0}$, and probability mass function $P_{R}$ supported on $\{ 1, \dots, T \}$.
\State Let $R$ be a random variable with probability mass function $P_{R}$.
\For{$k = 0, 1, \dots, R-1$}
    \For{$i = 1, \dots, N$}
        \State (i) Pick $S_{k}$ i.i.d. realizations $\{\xi^{k}_{i, l}\}_{l=1}^{S_{k}}$ of $\xi$ and realizations $\{ v^{k}_{i,l} \}_{l=1}^{S_{k}}$ of $v_{i}\in \eta \mathbb{S}_{n_{i}}$;
        \State (ii) Call Algorithm \ref{hierarchical-SA} twice to obtain inexact solutions $y^{\epsilon_{k}}_{i}(x^{k}_{i}+v^{k}_{i,l})$ and $y^{\epsilon_{k}}_{i}(x^{k}_{i}-v^{k}_{i,l})$;
        \State (iii) Compute $\Delta^{k, h}_{i, S_{k}, \epsilon_{k}}$, $\Delta^{k, m}_{i, S_{k}}$, and then compute $\Delta^{k, f}_{i, S_{k}, \epsilon_{k}}$;
        \State (iv) Update $x^{k+1}_{i} = \Pi_{X_{i}} [ x^{k}_{i} - \gamma_{k} \Delta^{k, f}_{i, S_{k}, \epsilon_{k}} ]$;
    \EndFor
\EndFor
\State \textbf{Output}: $x^{R}$ as final estimate.
\end{algorithmic}
\end{algorithm}

\begin{algorithm}
\caption{~SA scheme} \label{hierarchical-SA}
\begin{algorithmic}[1]
\State \textbf{Input}: An arbitrary $y^{0}_{i}\in X_{i}$, $\hat{x}^{k}_{i}\in X_{i} + \eta \mathbb{B}_{n_{i}}$, stepsize sequence $\alpha_{t} = \tfrac{\alpha_{0}}{t+\Gamma}$ where $\alpha_{0} > \tfrac{1}{2\mu_{i}}$ and $\Gamma > 0$, and iteration times $t_{k}$.
\For{$t = 0, 1, \dots, t_{k}-1$}
    \State (i) Generate $\Tilde{F}_{i}(\hat{x}^{k}_{i}, y^{t}_{i}, \xi^{t})$ where $\xi^{t}$ denotes the i.i.d. random realization of $\xi$.
    \State (ii) Update $y^{t+1}_{i} = \Pi_{Y_{i}} [ y^{t}_{i} - \alpha_{t}\Tilde{F}_{i}(\hat{x}^{k}_{i}, y^{t}_{i}, \xi^{t}) ]$;
\EndFor
\State \textbf{Output}: $y_{i}^{\epsilon_{k}}(\hat{x}^{k}_{i}) \triangleq y^{t_{k}}_{i}$ as final estimate.
\end{algorithmic}
\end{algorithm}

\begin{lemma}\label{hierarchical-lower-level-SA-lemma}
    Consider SA scheme (Algorithm \ref{hierarchical-SA}) for solving the lower-level SVI problem \eqref{lower-SVI}. Suppose that assumption $\mathrm{(D6)}$ holds. Given $k > 0$ and $\hat{x}^{k}_{i} \in X_{i}$, let $y_{i}(\hat{x}^{k}_{i})$ denote the unique exact solution of \eqref{lower-SVI} for given $\hat{x}^{k}_{i}\in X_{i} + \eta \mathbb{B}_{n_{i}}$ and $y^{\epsilon_{k}}_{i}(\hat{x}^{k}_{i})$ be the output of Algorithm \ref{hierarchical-SA}. Then for any $k\geq 0$, we have that
    \begin{equation}\label{SA-inexactness}
        \mathbb{E}[\| y^{\epsilon_{k}}_{i}(\hat{x}^{k}_{i}) - y_{i}(\hat{x}^{k}_{i}) \|^{2} \mid \hat{x}^{k}_{i}] \leq \epsilon_{k} \triangleq \frac{c_{\max}}{t_{k}+\Gamma},
    \end{equation}
    where $\Gamma>0$ and $c_{\max} \triangleq \max_{i\in [N]} \left\{ \max \left\{ \frac{(c^{2}_{F_{i}}+v^{2}_{i})\alpha^{2}_{0}}{2\mu_{i}\alpha_{0}-1}, \Gamma\sup_{y_{i}\in X_{i}}\| y^{0}_{i} - y_{i} \|^{2} \right\} \right\} > 0$.
\end{lemma}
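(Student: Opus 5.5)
The plan is the standard strongly monotone SA recursion argument followed by induction on $t$. Fix $k$ and $i$, and condition on $\hat{x}^{k}_{i}$. Write $y^{*} \triangleq y_{i}(\hat{x}^{k}_{i})$, $F(y) \triangleq \mathbb{E}[\Tilde{F}_{i}(\hat{x}^{k}_{i}, y, \xi)]$, $w^{t} \triangleq \Tilde{F}_{i}(\hat{x}^{k}_{i}, y^{t}_{i}, \xi^{t}) - F(y^{t}_{i})$ and $a_{t} \triangleq \mathbb{E}[\|y^{t}_{i} - y^{*}\|^{2}\mid \hat{x}^{k}_{i}]$. Since $y^{*}$ solves the VI, it is a fixed point: $y^{*} = \Pi_{Y_{i}}[y^{*} - \alpha_{t}F(y^{*})]$.

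\textbf{Step 1 (one-step recursion).} By nonexpansiveness of $\Pi_{Y_{i}}$, $\|y^{t+1}_{i}-y^{*}\|^{2} \le \|y^{t}_{i} - y^{*} - \alpha_{t}\Tilde{F}_{i}(\hat{x}^{k}_{i}, y^{t}_{i}, \xi^{t})\|^{2} + \text{(terms involving } F(y^{*}))$. I will use the VI inequality $F(y^{*})^{\top}(y^{t}_{i}-y^{*})\ge 0$ instead of subtracting $\alpha_t F(y^*)$. This gives
\[
\|y^{t+1}_{i}-y^{*}\|^{2} \le \|y^{t}_{i}-y^{*}\|^{2} - 2\alpha_{t}\Tilde{F}^{\top}(y^{t}_{i}-y^{*}) + \alpha_{t}^{2}\|\Tilde{F}\|^{2}.
\]
Now take conditional expectation. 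By (D6)(i), the cross term with $w^{t}$ vanishes. Strong monotonicity (D4) together with the VI inequality gives $F(y^{t}_{i})^{\top}(y^{t}_{i}-y^{*}) \ge \mu_{i}\|y^{t}_{i}-y^{*}\|^{2}$. (D4) is stated for $\Tilde{F}_{i}$ pointwise in $\xi$, so it passes to $F$ by taking expectations. By (D6)(ii)--(iii), $\mathbb{E}\|\Tilde{F}\|^{2} = \|F(y^{t}_{i})\|^{2} + \mathbb{E}\|w^{t}\|^{2} \le c_{F_{i}}^{2}+v_{i}^{2}$, since the cross term again has zero mean. The result is
\[
a_{t+1} \le (1-2\mu_{i}\alpha_{t})\,a_{t} + \alpha_{t}^{2}(c_{F_{i}}^{2}+v_{i}^{2}).
\]

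\textbf{Step 2 (induction).} I claim $a_{t} \le c_{\max}/(t+\Gamma)$ for all $t\ge 0$. At $t=0$ this holds because $a_{0} = \|y^{0}_{i}-y^{*}\|^{2} \le \sup\|y^{0}_{i}-y\|^{2} \le c_{\max}/\Gamma$ by the second entry in the definition of $c_{\max}$. The supremum should be over $Y_{i}$, since $y^{*}\in Y_{i}$; I will read the statement's $X_{i}$ that way.

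For the inductive step, write $s = t+\Gamma$, $\alpha_{t} = \alpha_{0}/s$ and $Q = c_{F_{i}}^{2}+v_{i}^{2}$. We need
\[
\Bigl(1-\tfrac{2\mu_{i}\alpha_{0}}{s}\Bigr)\tfrac{c}{s} + \tfrac{\alpha_{0}^{2}Q}{s^{2}} \le \tfrac{c}{s+1},
\]
where $c=c_{\max}$. Using $\tfrac{1}{s}-\tfrac{1}{s+1} \le \tfrac{1}{s^{2}}$, it suffices that $\tfrac{c}{s^{2}}(1-2\mu_{i}\alpha_{0}) + \tfrac{\alpha_{0}^{2}Q}{s^{2}} \le 0$. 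This is $c \ge \alpha_{0}^{2}Q/(2\mu_{i}\alpha_{0}-1)$, which is exactly the first entry of $c_{\max}$, and the condition $\alpha_{0}>1/(2\mu_{i})$ makes the denominator positive. The case $1-2\mu_{i}\alpha_{t}<0$ is harmless, since then the first term is bounded by a nonpositive quantity and the same algebra goes through. Setting $t=t_{k}$ gives \eqref{SA-inexactness}, and taking the maximum over $i$ makes the constant uniform.

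\textbf{Main obstacle.} The delicate points are in Step 1. One is handling the projection correctly: via the fixed-point property, or via the VI inequality combined with monotonicity, rather than naively cancelling $F(y^{*})$. The other is justifying that the second moment of $\Tilde{F}$ is bounded by $c_{F_{i}}^{2}+v_{i}^{2}$, without a factor of 2, using the zero-mean noise. If the constant with the VI-inequality route turns out to be off, I would instead use the fixed-point expansion and absorb the extra $\alpha_{t}^{2}\|F(y^{*})\|^{2}$ term into $c_{F_{i}}$, adjusting constants as needed. The induction itself is routine.
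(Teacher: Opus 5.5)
Your Step 1 recursion $a_{t+1}\le(1-2\mu_i\alpha_t)a_t+\alpha_t^2Q$ is correct. So is the induction step when $1-2\mu_i\alpha_t\ge 0$. This is the standard argument behind the result the paper cites; the paper gives no argument of its own.

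\textbf{The gap.} The gap is your sentence saying the case $1-2\mu_i\alpha_t<0$ is harmless. To pass from the hypothesis $a_t\le c/s$ to $(1-2\mu_i\alpha_t)a_t\le(1-2\mu_i\alpha_t)c/s$, you need a nonnegative coefficient. When the coefficient is negative the inequality reverses, and all you can extract is $a_{t+1}\le \alpha_0^2Q/s^2$. With $c=\alpha_0^2Q/(2\mu_i\alpha_0-1)$, the requirement $\alpha_0^2Q/s^2\le c/(s+1)$ is equivalent to $(2\mu_i\alpha_0-1)(s+1)\le s^2$. This fails for every $s\le 2\mu_i\alpha_0-1$. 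The hypotheses only impose $\alpha_0>1/(2\mu_i)$ and $\Gamma>0$, so whenever $\Gamma<2\mu_i\alpha_0$ the first iterations fall in exactly this regime.

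\textbf{The stated bound itself fails.} This is not just a slip in the write-up; the bound is false there, so no proof under the stated hypotheses can close it. Take $k_i=1$, $Y_i=[-1,1]$ and $\tilde{F}_i(\hat{x}_i,y,\xi)=y$. Then $\mu_i=1$ and $c_{F_i}=1$, and there is no noise, so (D6) holds for any $v_i>0$. The solution is $y^{*}=0$; take $y^{0}_i=1$, $\alpha_0=10$, $\Gamma=1$. The iteration is $y^{t+1}_i=\Pi_{[-1,1]}[(1-\alpha_t)y^t_i]$ with $\alpha_t=10,5,10/3,5/2,2$ for $t=0,\dots,4$. So $y^1_i,\dots,y^5_i=-1,1,-1,1,-1$ and $\|y^5_i-y^{*}\|^2=1$. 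Meanwhile $c_{\max}/(5+\Gamma)=\max\{100(1+v_i^2)/19,\,4\}/6\approx 0.88$ for small $v_i$. Your induction breaks precisely at $t=4$, where $1-2\mu_i\alpha_4=-3$.

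\textbf{Possible repairs.} You need an extra hypothesis or a larger constant.
\begin{itemize}
\item The simplest fix is to assume $\Gamma\ge 2\mu_i\alpha_0$. Then $1-2\mu_i\alpha_t\ge 0$ for all $t$, and your induction is complete as written.
\item Alternatively, keep $\Gamma>0$ arbitrary and handle the early iterations with an a priori bound. From strong monotonicity, the VI inequality and (D6)(iii), $\mu_i\|y-y^{*}\|^2\le F(y)^{\top}(y-y^{*})\le c_{F_i}\|y-y^{*}\|$, so $\|y-y^{*}\|\le c_{F_i}/\mu_i$ on $Y_i$. Multiply the first entry of $c_{\max}$ by $4$. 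Then for $s<2\mu_i\alpha_0$ you get $c/(s+1)>4Q\alpha_0^2/(4\mu_i^2\alpha_0^2-1)>Q/\mu_i^2\ge c_{F_i}^2/\mu_i^2\ge a_{t+1}$. For $s\ge 2\mu_i\alpha_0$ your original algebra applies unchanged.
\end{itemize}
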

\begin{proof}
    The proof can be found in \cite[Theorem 2]{cui-shanbhag-yousefian-2023}. We omit the proof here.
\end{proof}

To facilitate our subsequent analysis, we define the unbiased estimator $\Delta^{k, h}_{i, l}$ as
\begin{equation}\label{exact-two-point-hi}
    \Delta^{k, h}_{i, l} \triangleq \frac{n_{i}}{2\eta}(\Tilde{h}_{i}(x^{k}_{i}+v^{k}_{i,l}, y_{i}(x^{k}_{i}+v^{k}_{i,l}), \xi^{k}_{i,l})-\Tilde{h}_{i}(x^{k}_{i}-v^{k}_{i,l}, y_{i}(x^{k}_{i}-v^{k}_{i,l}), \xi^{k}_{i,l}))\frac{v^{k}_{i,l}}{\|v^{k}_{i,l}\|}.
\end{equation}
Before proceeding, we show the following biasedness and bounded second moment properties.

\begin{lemma}\label{RS-RSG-bound-biasedness-second-moment}
    Suppose that assumptions $\mathrm{(D2)}$-$\mathrm{(D5)}$ hold. For the inexact mini-batch gradient estimator $\Delta^{k, f}_{i, l, \epsilon_{k}} \triangleq \Delta^{k, h}_{i, l, \epsilon_{k}} + \Delta^{k, m}_{i, l}$, we have the following: \\
    \emph{(i)} We have $\| \mathbb{E}[ \Delta^{k, f}_{i, l, \epsilon_{k}} - \nabla_{x_{i}} f^{\eta}_{\mathrm{H}, i}(x^{k}_{i}, x^{k}_{-i}) \mid x^{k}] \| \leq \mu_{\mathrm{H}, k} \triangleq \eta^{-1} n_{\max} L^{y}_{\max} \sqrt{\epsilon_{k}}$, where $\epsilon_{k}$ is defined in \eqref{SA-inexactness}. \\
    \emph{(ii)}  We have that $\mathbb{E}[ \| \Delta^{k, f}_{i, l, \epsilon_{k}} - \nabla_{x_{i}} f^{\eta}_{\mathrm{H}, i}(x^{k}_{i}, x^{k}_{-i}) \|^{2} \mid x^{k}] \leq (\sigma_{\mathrm{H}, k})^{2} \triangleq \frac{4n^{2}_{\max}(L^{y}_{\max})^{2}\epsilon_{k}}{\eta^{2}} + 64\sqrt{2\pi}(L_{\max})^{2}n_{\max} + 2\sigma_{m}^{2}$, where $\epsilon_{k}$ is defined in \eqref{SA-inexactness}.
\end{lemma}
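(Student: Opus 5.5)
The plan is to split the inexact estimator into three pieces: the exact two-point estimator $\Delta^{k,h}_{i,l}$ from \eqref{exact-two-point-hi}, the inexactness error $e^{k}_{i,l} \triangleq \Delta^{k,h}_{i,l,\epsilon_{k}} - \Delta^{k,h}_{i,l}$, and the smooth part $\Delta^{k,m}_{i,l}$. Since $\nabla_{x_i} f^{\eta}_{\mathrm{H},i} = \nabla_{x_i} h^{\eta}_i + \nabla_{x_i} m_i$, we can write
\begin{equation*}
\Delta^{k,f}_{i,l,\epsilon_k} - \nabla_{x_i} f^{\eta}_{\mathrm{H},i}(x^k) = e^{k}_{i,l} + \big(\Delta^{k,h}_{i,l} - \nabla_{x_i} h^{\eta}_i(x^k_i)\big) + \big(\Delta^{k,m}_{i,l} - \nabla_{x_i} m_i(x^k)\big).
\end{equation*}
The second and third brackets have zero conditional mean given $x^k$. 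For the second bracket this follows from Lemma \ref{RS-property-grad}-(i) applied to the map $x_i \mapsto h_i(x_i, y_i(x_i))$, using that $\xi^k_{i,l}$ and $v^k_{i,l}$ are independent of $x^k$. For the third bracket it follows from (D2).

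The key step is a pointwise bound on $e^k_{i,l}$. Since $\|v^k_{i,l}/\|v^k_{i,l}\|\|=1$, assumption (D5) gives
\begin{equation*}
\|e^k_{i,l}\| \le \frac{n_i L^y_i}{2\eta}\Big(\|y^{\epsilon_k}_i(x^k_i+v^k_{i,l}) - y_i(x^k_i+v^k_{i,l})\| + \|y^{\epsilon_k}_i(x^k_i-v^k_{i,l}) - y_i(x^k_i-v^k_{i,l})\|\Big).
\end{equation*}
We then condition on $x^k$ and $v^k_{i,l}$, which fixes $\hat x^k_i$. Applying Jensen together with Lemma \ref{hierarchical-lower-level-SA-lemma} bounds each term by $\sqrt{\epsilon_k}$ in expectation. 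The tower property therefore gives $\mathbb{E}[\|e^k_{i,l}\| \mid x^k] \le \eta^{-1} n_i L^y_i \sqrt{\epsilon_k}$.

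Part (i) follows directly. The zero-mean brackets drop out, and $\|\mathbb{E}[e^k_{i,l}\mid x^k]\| \le \mathbb{E}[\|e^k_{i,l}\|\mid x^k]$. Bounding $n_i$ and $L^y_i$ by their maxima gives $\mu_{\mathrm{H},k}$.

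For (ii), square the pointwise bound and use $(a+b)^2 \le 2a^2 + 2b^2$ together with \eqref{SA-inexactness}. This gives $\mathbb{E}[\|e^k_{i,l}\|^2 \mid x^k] \le \frac{n_i^2 (L^y_i)^2}{4\eta^2}\cdot 2\cdot 2\epsilon_k = \frac{n_i^2 (L^y_i)^2}{\eta^2}\epsilon_k$.

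Next, handle the exact zeroth-order term exactly as in Lemma \ref{zo-gradient-unbiased-second-moment}. Its variance is at most its second moment, which is bounded by $16\sqrt{2\pi} L_i^2 n_i$ via Lemma \ref{RS-property-grad}-(vi). This applies because $\tilde h_i(\bullet, y_i(\bullet), \xi)$ is $L_i$-Lipschitz.

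The three pieces are then combined with weighted Young inequalities: put weight $4$ on $e^k_{i,l}$, weight $4$ on the zeroth-order term, and weight $2$ on the $m$-term. Concretely, first apply $\|a+b\|^2 \le 2\|a\|^2 + 2\|b\|^2$ to $a = e + (\Delta^h - \nabla h^\eta)$ and $b$ the $m$-part. Then split $a$ again with factor $2$. This yields exactly
\begin{equation*}
4 n_{\max}^2 (L^y_{\max})^2 \epsilon_k/\eta^2 \;+\; 64\sqrt{2\pi} L_{\max}^2 n_{\max} \;+\; 2\sigma_m^2 .
\end{equation*}

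The main obstacle is making the conditioning rigorous. Lemma \ref{hierarchical-lower-level-SA-lemma} holds conditionally on $\hat x^k_i$, and $\hat x^k_i$ depends on the random direction $v^k_{i,l}$. We also need the SA noise to be independent of $\xi^k_{i,l}$, so that conditioning on $(x^k, v^k_{i,l}, \xi^k_{i,l})$ still gives the $\epsilon_k$ bound. Once that is settled, the tower property closes both parts. The remaining steps are routine constant tracking.
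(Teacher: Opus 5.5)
Your proposal is correct and follows essentially the same route as the paper: the same split through the exact estimator $\Delta^{k,h}_{i,l}$, Jensen together with (D5) and Lemma~\ref{hierarchical-lower-level-SA-lemma} for (i), and the same $4/4/2$ Young weighting with Lemma~\ref{RS-property-grad}-(vi) for (ii), which gives exactly the stated constants. On the conditioning issue you raise, conditioning on $(x^{k}, v^{k}_{i,l})$ and then using the tower property is enough, and you do not need the SA noise to be independent of $\xi^{k}_{i,l}$, because (D5) bounds $\|e^{k}_{i,l}\|$ uniformly in $\xi$.
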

\begin{proof}
    (i) Since $\{ \xi^{k}_{i,l} \}_{l=1}^{S_{k}}$ and $\{ v^{k}_{i,l} \}_{l=1}^{S_{k}}$ are i.i.d. realizations, we have that
    \begin{align}\label{RS-RSG-bound-biasedness-second-moment-eqn1}
        & \| \mathbb{E} [ \Delta^{k, f}_{i, l, \epsilon_{k}} - \nabla_{x_{i}} f^{\eta}_{\mathrm{H}, i}(x^{k}_{i}, x^{k}_{-i}) \mid x^{k} ] \| \notag \\
        &= \| \mathbb{E} [ \Delta^{k,h}_{i,l,\epsilon_{k}} + \Delta^{k,m}_{i,l} - \nabla_{x_{i}} h^{\eta}_{i}(x^{k}_{i}, y_{i}(x^{k}_{i})) - \nabla_{x_{i}} m_{i}(x^{k}_{i}, x^{k}_{-i}) \mid x^{k} ] \| \notag \\
        &= \| \mathbb{E} [ \Delta^{k,h}_{i,l,\epsilon_{k}} - \nabla_{x_{i}} h^{\eta}_{i}(x^{k}_{i}, y_{i}(x^{k}_{i})) \mid x^{k} ] \| \notag \\
        &= \| \mathbb{E} [ \Delta^{k,h}_{i,l,\epsilon_{k}} - \Delta^{k,h}_{i,l} + \Delta^{k,h}_{i,l} - \nabla_{x_{i}} h^{\eta}_{i}(x^{k}_{i}, y_{i}(x^{k}_{i})) \mid x^{k} ] \| = \| \mathbb{E} [ \Delta^{k,h}_{i,l,\epsilon_{k}} - \Delta^{k,h}_{i,l} \mid x^{k} ] \|,
    \end{align}
    where the exact unbiased estimator $\Delta^{k,h}_{i,l}$ is defined in \eqref{exact-two-point-hi}. By invoking Jensen's inequality and Lipschitz continuity assumption $\mathrm{(D5)}$, we may deduce that
    \begin{align}\label{RS-RSG-bound-biasedness-second-moment-eqn2}
        & \| \mathbb{E} [ \Delta^{k,h}_{i,l,\epsilon_{k}} - \Delta^{k,h}_{i,l} \mid x^{k} ] \| \leq \mathbb{E} [ \|\Delta^{k,h}_{i,l,\epsilon_{k}} - \Delta^{k,h}_{i,l} \| \mid x^{k} ] \notag \\
        &\overset{\mathrm{(D5)}}{\leq} \left( \frac{n_{i}}{2\eta} \right) \left( L^{y}_{i} \mathbb{E}[ \|y^{\epsilon_{k}}_{i}(x^{k}_{i}+v^{k}_{i,l}) - y_{i}(x^{k}_{i}+v^{k}_{i,l})\| \mid x^{k}] + L^{y}_{i} \mathbb{E}[ \|y^{\epsilon_{k}}_{i}(x^{k}_{i}-v^{k}_{i,l}) - y_{i}(x^{k}_{i}-v^{k}_{i,l})\| \mid x^{k}] \right) \notag \\
        &\overset{\text{Lemma }\ref{hierarchical-lower-level-SA-lemma}}{\leq} \left( \frac{n_{i}}{2\eta} \right) ( L^{y}_{i}\sqrt{\epsilon_{k}} + L^{y}_{i}\sqrt{\epsilon_{k}}) = \left( \frac{n_{i}}{\eta} \right) L^{y}_{i}\sqrt{\epsilon_{k}} \leq \left( \frac{n_{\max}}{\eta} \right) L^{y}_{\max}\sqrt{\epsilon_{k}},
    \end{align}
    where $L^{y}_{\max} = \max\limits_{i\in [N]} L^{y}_{i}$ and $n_{\max} = \max\limits_{i\in [N]} n_{i}$. By combining \eqref{RS-RSG-bound-biasedness-second-moment-eqn1} with \eqref{RS-RSG-bound-biasedness-second-moment-eqn2}, we complete the proof.

    \noindent (ii) We have that
    \begin{align}\label{RS-RSG-bound-biasedness-second-moment-eqn3}
        &\mathbb{E}[ \| \Delta^{k,h}_{i,l,\epsilon_{k}} + \Delta^{k,m}_{i,l} - \nabla_{x_{i}} f^{\eta}_{\mathrm{H}, i}(x^{k}_{i}, x^{k}_{-i}) \|^{2} \mid x^{k}] \notag \\
        &= \mathbb{E}[ \| \Delta^{k,h}_{i,l,\epsilon_{k}} + \Delta^{k,m}_{i,l} - \nabla_{x_{i}} h^{\eta}_{i}(x^{k}_{i}, y_{i}(x^{k}_{i})) - \nabla_{x_{i}} m_{i}(x^{k}_{i}, x^{k}_{-i}) \|^{2} \mid x^{k}] \notag \\
        &\leq 2\mathbb{E}[ \| \Delta^{k,h}_{i,l,\epsilon_{k}} - \nabla_{x_{i}} h^{\eta}_{i}(x^{k}_{i}, y_{i}(x^{k}_{i})) \|^{2} \mid x^{k}] + 2\mathbb{E}[ \| \Delta^{k,m}_{i,l} - \nabla_{x_{i}} m_{i}(x^{k}_{i}, x^{k}_{-i}) \|^{2} \mid x^{k}] \notag \\
        &\overset{\mathrm{(D3)}}{\leq} \underbrace{4\mathbb{E}[ \| \Delta^{k,h}_{i,l,\epsilon_{k}} - \Delta^{k,h}_{i,l} \|^{2} \mid x^{k}]}_{\mathrm{(i)}} + \underbrace{4\mathbb{E}[ \| \Delta^{k,h}_{i,l} - \nabla_{x_{i}} h^{\eta}_{i}(x^{k}_{i}, y_{i}(x^{k}_{i})) \|^{2} \mid x^{k}]}_{\mathrm{(ii)}} + 2\sigma_{m}^{2}.
    \end{align}
    By the Lipschitz continuity assumption $\mathrm{(D5)}$, we have
    \begin{align}\label{RS-RSG-bound-biasedness-second-moment-eqn4}
        \mathrm{(i)} &\leq \frac{2n^{2}_{i}}{\eta^{2}} \Big[ (L^{y}_{i})^{2}\mathbb{E}[\|y^{\epsilon_{k}}_{i}(x^{k}_{i}+v^{k}_{i,l})-y_{i}(x^{k}_{i}+v^{k}_{i,l})\|^{2} \mid x^{k}] + (L^{y}_{i})^{2}\mathbb{E}[\|y^{\epsilon_{k}}_{i}(x^{k}_{i}-v^{k}_{i,l})-y_{i}(x^{k}_{i}-v^{k}_{i,l})\|^{2} \mid x^{k}] \Big] \notag \\
        &\overset{\text{Lemma }\ref{hierarchical-lower-level-SA-lemma}}{\leq} \frac{2n^{2}_{i}}{\eta^{2}} (2(L^{y}_{i})^{2}\epsilon_{k}) = \frac{4n^{2}_{i}(L^{y}_{i})^{2}\epsilon_{k}}{\eta^{2}} \leq \frac{4n^{2}_{\max}(L^{y}_{\max})^{2}\epsilon_{k}}{\eta^{2}}.
    \end{align}
    By \cite[Lemma 2.4-(v)]{marrinan-shanbhag-yousefian-2026} and the fact that $\mathbb{E}[ \Delta^{k,h}_{i,l} \mid x^{k}] = \nabla_{x_{i}}\mathbb{E}[\Tilde{h}^{\eta}_{i}(x^{k}_{i}, y_{i}(x^{k}_{i}), \xi)]$, we obtain
    \begin{align}\label{RS-RSG-bound-biasedness-second-moment-eqn5}
        \mathrm{(ii)} &= 4\mathbb{E}[\| \Delta^{k,h}_{i,l} \|^{2} - 2(\Delta^{k,h}_{i,l})^{\top}\nabla_{x_{i}} h^{\eta}_{i}(x^{k}_{i}, y_{i}(x^{k}_{i})) + \|\nabla_{x_{i}} h^{\eta}_{i}(x^{k}_{i}, y_{i}(x^{k}_{i}))\|^{2} \mid x^{k}] \notag \\
        &\leq 4\mathbb{E}[\| \Delta^{k,h}_{i,l} \|^{2} \mid x^{k}] \leq 64\sqrt{2\pi}(L_{i})^{2}n_{i} \leq 64\sqrt{2\pi}L^{2}_{\max}n_{\max},
    \end{align}
    where $L_{\max} \triangleq \max_{i\in [N]} L_{i}$. By combining \eqref{RS-RSG-bound-biasedness-second-moment-eqn3}, \eqref{RS-RSG-bound-biasedness-second-moment-eqn4}, and \eqref{RS-RSG-bound-biasedness-second-moment-eqn5}, we arrive the desired conclusion.
\end{proof}

With Lemma \ref{RS-RSG-bound-biasedness-second-moment} and Theorem \ref{biased-RSG-convergence} proven, we are ready to establish the convergence of the biased RS-RSG scheme. Similar to Theorem \ref{RS-RSG-main-theorem}, we need to consider both $\mathcal{SZO}$ and $\mathcal{SFO}$. If the $k$th inexactness $\epsilon_{k}$ defined in \eqref{SA-inexactness} satisfies $\epsilon_{k} \leq \epsilon^{\mathrm{up}}$ for some $\epsilon^{\mathrm{up}} > 0$, then $\sigma_{\mathrm{H}, k}$ in Lemma \ref{RS-RSG-bound-biasedness-second-moment}-(ii) can be upper bounded by $\sigma_{\mathrm{H}}$, defined as
\begin{equation}\label{b-RS-RSG-sigma}
    \sigma_{\mathrm{H}} \triangleq \sqrt{ \frac{4n^{2}_{\max}(L^{y}_{\max})^{2}\epsilon^{\mathrm{up}}}{\eta^{2}} + 64\sqrt{2\pi}L^{2}_{\max}n_{\max} + 2\sigma_{m}^{2} }.
\end{equation}

\begin{theorem}[Convergence of biased RS-RSG]\label{biased-RS-RSG-main-theorem}
    Consider the stochastic $N$-player hierarchical game \eqref{hierarchical-games} and its randomized smoothed game \eqref{hierarchical-games-RS}. Suppose that Assumption $\mathrm{D}$ holds. Suppose that we choose $\gamma_{k} = 1/(2L^{\eta})$ and $P_{R} = 1/T$ for all $k = 1, \dots, T$. Given a sufficiently large number $M_{0}$ of $\mathcal{SZO}$ calls and a sufficiently large number $M_{1}$ of $\mathcal{SFO}$ calls. We define $M \triangleq \min\{ M_{0}/2, M_{1} \}$. If the batch size $S_{k}$ is fixed as
    \begin{equation}\label{b-RS-RSG-Sk}
        S_{k} = S \triangleq \left\lceil \frac{\sigma_{\mathrm{H}}\sqrt{6M}}{4L^{\eta}_{\mathrm{H}}D^{\eta}_{\mathrm{H}}} \right\rceil,
    \end{equation}
    where $D^{\eta}_{\mathrm{H}} = ((P^{\eta}_{\mathrm{H}, \max}-P^{\eta}_{\mathrm{H}, \min})/L^{\eta}_{\mathrm{H}})^{1/2}$ and $\sigma_{\mathrm{H}}$ is defined in \eqref{b-RS-RSG-sigma}. Then we have
    \begin{equation}\label{biased-RS-RSG-main-theorem-eqn3}
        \begin{aligned}
            \mathbb{E}[\| G^{\eta}_{\mathrm{H}, \gamma_{R}}(x^{R}) \|^{2}] &\leq \frac{48(L^{\eta}_{\mathrm{H}})^{2}(D^{\eta}_{\mathrm{H}})^{2}N}{M} + \frac{60\sqrt{6}(L^{\eta}_{\mathrm{H}})^{2}(D^{\eta}_{\mathrm{H}})^{2}N\sigma_{\mathrm{H}} + 18\sqrt{6}N^{2}\sigma_{\mathrm{H}}\sum_{k=1}^{T}(\mu_{\mathrm{H}, k})^{2}}{L^{\eta}_{\mathrm{H}} D^{\eta}_{\mathrm{H}}\sqrt{M}},
        \end{aligned}
    \end{equation}
    where the residual is defined as $G^{\eta}_{\mathrm{H}, \gamma}(x) \triangleq \tfrac{1}{\gamma} (x-\Pi_{X}[x-\gamma F^{\eta}_{\mathrm{H}}(x)])$ with $F^{\eta}_{\mathrm{H}}(x) \triangleq (\nabla_{x_{i}} f^{\eta}_{\mathrm{H}, i}(x_{i}, x_{-i}))_{i=1}^{N}$.
\end{theorem}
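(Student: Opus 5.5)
The plan is to recognize Theorem \ref{biased-RS-RSG-main-theorem} as a direct instantiation of Theorem \ref{biased-RSG-convergence} (b-RSG), applied to the smoothed hierarchical game \eqref{hierarchical-games-RS}, together with the oracle-counting argument already used for Theorem \ref{RS-RSG-main-theorem}. The smoothed game plays the role of the smooth game \eqref{smooth-games}. Its pseudo-gradient map is $F^{\eta}_{\mathrm{H}}$, and Proposition \ref{potentiality-integrability} together with (D1) gives $\nabla P^{\eta}_{\mathrm{H}} = F^{\eta}_{\mathrm{H}}$. The potential is $L^{\eta}_{\mathrm{H}}$-smooth and attains its extrema on $X$, so (B1) holds with $L = L^{\eta}_{\mathrm{H}}$ and $D = D^{\eta}_{\mathrm{H}}$. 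Since the b-RS-RSG update is exactly the projected minibatch step of RSG with the per-sample estimator $\Delta^{k,f}_{i,l,\epsilon_k}$, the descent inequality \eqref{biased-RSG-eqn1} carries over verbatim once $P$ is replaced by $P^{\eta}_{\mathrm{H}}$.

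First I will verify (B2) and (B3) for $\Delta^{k,f}_{i,l,\epsilon_k}$, with $\mu_k = \mu_{\mathrm{H},k}$ and $\sigma_k = \sigma_{\mathrm{H},k}$. Both bounds come from Lemma \ref{RS-RSG-bound-biasedness-second-moment}(i)--(ii), which in turn rely on (D2)--(D5) and Lemma \ref{hierarchical-lower-level-SA-lemma} (which needs (D6)). Next, under the standing convention $\epsilon_k \le \epsilon^{\mathrm{up}}$, I bound $\sigma_{\mathrm{H},k} \le \sigma_{\mathrm{H}}$ using \eqref{b-RS-RSG-sigma}, so the uniform variance hypothesis of Theorem \ref{biased-RSG-convergence}(ii) holds with $\sigma = \sigma_{\mathrm{H}}$. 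The stepsize is $\gamma_k = 1/(2L^{\eta}_{\mathrm{H}})$, which I read the statement's $L^{\eta}$ as denoting; it satisfies $\gamma_k \le 1/(2L)$, and the uniform $P_R$ matches the prescribed mass function.

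One point needs checking: the minibatch estimate \eqref{biased-RSG-eqn2} requires the $S_k$ per-sample errors to be conditionally independent given $x^k$. This holds because each pair $(\xi^k_{i,l}, v^k_{i,l})$ is i.i.d. and each lower-level SA call uses fresh samples. I will note explicitly that conditioning on $x^k$ leaves the cross terms controlled by $\mu_{\mathrm{H},k}^2$.

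Then I will count oracles. Each iteration uses $2SN$ zeroth-order evaluations (two per direction sample) and $SN$ first-order evaluations. The lower-level SA samples feed $\epsilon_k$ rather than these budgets. Hence
\[
T = \min\{\lfloor (M_0/2)/(SN)\rfloor, \lfloor M_1/(SN)\rfloor\} \le M/(SN),
\qquad T \ge M/(2SN),
\]
where the lower bound holds for sufficiently large $M_0$ and $M_1$. These are exactly the two relations $T \le M/(SN)$ and $T \ge M/(2SN)$ used in \eqref{biased-RSG-eqn3}--\eqref{biased-RSG-eqn4}. With $S$ chosen as in \eqref{b-RS-RSG-Sk}, which is \eqref{S-selection} with the substituted constants, I then take $c = \sqrt{3}$ in \eqref{biased-RSG-final-bound} and obtain \eqref{biased-RS-RSG-main-theorem-eqn3}.

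The main obstacle is bookkeeping rather than new analysis. One issue is matching constants: the theorem uses $L^{\eta}$ in $\gamma_k$ but $L^{\eta}_{\mathrm{H}}$ in $S$, and these must be identified. The other is making sure the conditional-independence structure of the nested SA samples justifies applying \eqref{biased-RSG-eqn1'} with the per-sample bias and variance from Lemma \ref{RS-RSG-bound-biasedness-second-moment}.
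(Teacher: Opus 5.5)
Your proposal is correct and follows the paper's own route. Like the paper, you instantiate Theorem \ref{biased-RSG-convergence} with $L^{\eta}_{\mathrm{H}}$, $D^{\eta}_{\mathrm{H}}$, $\mu_{\mathrm{H},k}$ and $\sigma_{\mathrm{H},k}\le\sigma_{\mathrm{H}}$ (supplied by Lemma \ref{RS-RSG-bound-biasedness-second-moment}), reuse the oracle count from Theorem \ref{RS-RSG-main-theorem} to get $M/(2SN)\le T\le M/(SN)$, and set $c=\sqrt{3}$. Your remarks on the conditional independence of the minibatch errors and on reading $L^{\eta}$ as $L^{\eta}_{\mathrm{H}}$ make explicit two points the paper leaves implicit.
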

\begin{proof}
     The proof proceeds similarly to that of Theorem \ref{biased-RSG-convergence}. Consequently, we may obtain \eqref{biased-RS-RSG-useful} with $L$, $D$, $\sigma_k^2$, and $\mu_k^2$ replaced by $L^{\eta}_{\mathrm{H}}$, $D^{\eta}_{\mathrm{H}}$, $\sigma^{2}_{\mathrm{H}, k}$, and $\mu^{2}_{\mathrm{H}, k}$, respectively. We can see that the biased RS-RSG algorithm can perform at most $T = \min \{ \lfloor (M_{0}/2)/(SN) \rfloor, \lfloor M_{1}/(SN) \rfloor \}$ iterations. Similar to the proof of Theorem \ref{RS-RSG-main-theorem}, we have the lower bound
     \begin{equation*}
         T \geq \min \{ (M_{0}/2)/(2SN), M_{1}/(2SN) \} = M/(2SN)
     \end{equation*}
     and the upper bound
     \begin{equation*}
         T \leq \min \{ (M_{0}/2)/(SN), M_{1}/(SN) \} = M/(SN).
     \end{equation*}
     Therefore, we have the same lower and upper bounds on $T$ as Theorem \ref{biased-RSG-convergence}. The remainder of the proof proceeds similarly by additionally invoking Lemma \ref{RS-RSG-bound-biasedness-second-moment}.
\end{proof}

Under the Lipschitz continuity on each Clarke subdifferential 
$\partial^{C}_{x_i} h_i(x_i,y_i(x_i))$, we obtain an $\mathcal{O}(\eta^{2})$ CNE approximation of \eqref{hierarchical-games}, similar to the result in Theorem \ref{rs-approx-main-thm}. Based on Theorem \ref{biased-RS-RSG-main-theorem}, we next derive the complexity guarantees for biased RS-RSG.

\subsection{Complexity Analysis of Biased RS-RSG}\label{Sec-5.2}

In this subsection, we derive the overall iteration ans sample complexities of the biased RS-RSG scheme based on Theorem \ref{biased-RS-RSG-main-theorem}.

\begin{theorem}[Overall complexities of biased RS-RSG]\label{overall-complexities-b-RS-RSG}
    Consider the biased RS-RSG scheme for solving the stochastic $N$-player hierarchical game \eqref{hierarchical-games}. Consider the same setting as in Theorem \ref{biased-RS-RSG-main-theorem}. At the $k$th upper-level iteration, let the number of lower-level SA iterations be $t_{k} = \lceil (k+1)^{1+\delta} \rceil$ for any arbitrarily small $\delta > 0$. Suppose that $\eta\in (0, 1]$. Then the complexities to reach a point $x^{R}$ such that $\mathbb{E}[\| G^{\eta}_{\mathrm{H}, \gamma_{R}}(x^{R}) \|] \leq \epsilon$ are as follows: \\
    \emph{(i)} The upper-level sample complexity is $\mathcal{O}(L^{4}_{\max} n^{13/2}_{\max} N^{5}\eta^{-7}\epsilon^{-4})$. \\
    \emph{(ii)} The upper-level iteration complexity is $\mathcal{O}(L^{3}_{\max}n^{7/2}_{\max}N^{2}\eta^{-4}\epsilon^{-2})$. \\
    \emph{(iii)} The lower-level sample and iteration complexities are
    $\mathcal{O}(L^{17/2+3\delta}_{\max} n^{45/4+7\delta/2}_{\max}$ $N^{29/4+2\delta}\eta^{-(12+4\delta)}\epsilon^{-(6+2\delta)})$.
\end{theorem}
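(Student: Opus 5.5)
The plan is to start from the bound \eqref{biased-RS-RSG-main-theorem-eqn3} of Theorem \ref{biased-RS-RSG-main-theorem} and control each of its ingredients in terms of $L_{\max}$, $n_{\max}$, $N$, $\eta$. Then we solve for $M$ and $T$, and count the lower-level work.

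\textbf{Step 1: the bias sum.} With $t_k=\lceil (k+1)^{1+\delta}\rceil$, Lemma \ref{hierarchical-lower-level-SA-lemma} gives $\epsilon_k \le c_{\max}/(k+1)^{1+\delta}$. Hence $\epsilon_k\le \epsilon^{\mathrm{up}} \triangleq c_{\max}$ uniformly, and Lemma \ref{RS-RSG-bound-biasedness-second-moment}(i) yields
\[
\sum_{k=1}^T \mu_{\mathrm{H},k}^2 \le \eta^{-2}n_{\max}^2 (L^y_{\max})^2 c_{\max}\sum_{k\ge1}(k+1)^{-(1+\delta)} = \mathcal{O}(n_{\max}^2\eta^{-2}),
\]
which is finite precisely because $\delta>0$.

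\textbf{Step 2: the remaining constants.} By \eqref{b-RS-RSG-sigma} and $\eta\le 1$, $\sigma_{\mathrm H} = \mathcal{O}(n_{\max}\eta^{-1} + L_{\max}n_{\max}^{1/2}) = \mathcal{O}(L_{\max} n_{\max}\eta^{-1})$, treating $L^y_{\max}$ and $\sigma_m$ as constants. Exactly as in \eqref{RS-RSG-complexity-eqn3} and \eqref{RS-RSG-complexity-eqn4}, Lemma \ref{how-to-evaluate-Leta} gives $L^{\eta}_{\mathrm H}=\mathcal{O}(L_{\max}\sqrt{N n_{\max}}\,\eta^{-1})$ and $(L^\eta_{\mathrm H})^2(D^\eta_{\mathrm H})^2 = L^\eta_{\mathrm H}(P^\eta_{\mathrm H,\max}-P^\eta_{\mathrm H,\min}) = \mathcal{O}(L_{\max}^2 N n_{\max}^{1/2}\eta^{-1})$. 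The dominant term in \eqref{biased-RS-RSG-main-theorem-eqn3} is then the $M^{-1/2}$ term. I would bound the bias part of its numerator, divided by $L^\eta_{\mathrm H}D^\eta_{\mathrm H}$, as
\[
\frac{N^2\sigma_{\mathrm H} n_{\max}^2\eta^{-2}}{L^{\eta}_{\mathrm H}D^{\eta}_{\mathrm H}},
\]
and here one must be careful that $L^\eta_{\mathrm H}D^\eta_{\mathrm H}=\sqrt{L^\eta_{\mathrm H}\,\Delta P}$ appears in the denominator. I expect this bookkeeping to be the main obstacle. Since a lower bound on $\Delta P$ is unavailable, I would follow the paper's convention of treating $1/(L D)$ loosely: bound the $\sigma$-term by $L D N\sigma_{\mathrm H}$ and the bias term crudely, and aim to match the claimed exponents $n^{13/2}$ and $\eta^{-7}$. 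Squaring $\mathbb{E}\|G\|\le\epsilon$ requires the right-hand side to be at most $\epsilon^2$, giving $M = \mathcal{O}\big((\text{numerator})^2\epsilon^{-4}\big)$. Combining $\sigma_{\mathrm H}^2 N^4 n_{\max}^4\eta^{-4}$ with the $L_{\max}^2 N n_{\max}^{1/2}\eta^{-1}$ factors should produce $L_{\max}^4 n_{\max}^{13/2}N^5\eta^{-7}\epsilon^{-4}$, which gives (i).

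\textbf{Step 3: iterations and lower-level cost.} For (ii), use $T\le M/(SN)$ with $S\asymp \sigma_{\mathrm H}\sqrt{M}/(L^\eta_{\mathrm H}D^\eta_{\mathrm H})$. Then $T=\mathcal{O}(L^\eta_{\mathrm H}D^\eta_{\mathrm H}\sqrt M/(\sigma_{\mathrm H}N))$, and substituting $\sqrt{M}=\mathcal{O}(L_{\max}^2 n_{\max}^{13/4}N^{5/2}\eta^{-7/2}\epsilon^{-2})$ yields the stated $T$. For (iii), each upper-level iteration makes $2SN$ calls to Algorithm \ref{hierarchical-SA}, each costing $t_k$ samples and iterations. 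The total is therefore
\[
2SN\sum_{k=0}^{T-1}\lceil (k+1)^{1+\delta}\rceil = \mathcal{O}\big(SN\,T^{2+\delta}\big) = \mathcal{O}\big(M\,T^{1+\delta}\big).
\]
Inserting the bounds on $M$ and $T$ from (i) and (ii) and collecting exponents gives the expression in (iii). The $\delta$-dependent exponents come from $T^{\delta}$, namely $L^{3\delta}n^{\ldots\delta}N^{2\delta}\eta^{-4\delta}\epsilon^{-2\delta}$. I would check that the base exponents ($L^{17/2}$, $\epsilon^{-6}$, $\eta^{-12}$) arise consistently with the paper's conventions for the $N$ and $n$ powers.
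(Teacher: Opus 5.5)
For (i) and (ii) you follow the paper's proof. The common steps are: summability of $\sum_k \mu_{\mathrm{H},k}^2$ from the choice of $t_k$; the estimates $\sigma_{\mathrm{H}}=\mathcal{O}(L_{\max}n_{\max}\eta^{-1})$ and $L^{\eta}_{\mathrm{H}}D^{\eta}_{\mathrm{H}}=\mathcal{O}(L_{\max}n_{\max}^{1/4}N^{1/2}\eta^{-1/2})$; and the bound $\mathbb{E}[\|G^{\eta}_{\mathrm{H},\gamma_R}(x^R)\|^2]=\mathcal{O}(L^{\eta}_{\mathrm{H}}D^{\eta}_{\mathrm{H}}N^2\sigma_{\mathrm{H}}\sum_k\mu_{\mathrm{H},k}^2/\sqrt{M})$. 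Your product $\sigma_{\mathrm{H}}^2N^4n_{\max}^4\eta^{-4}\cdot L_{\max}^2Nn_{\max}^{1/2}\eta^{-1}$ encodes exactly this bound. Then $T=\mathcal{O}(L^{\eta}_{\mathrm{H}}D^{\eta}_{\mathrm{H}}\sqrt{M}/(\sigma_{\mathrm{H}}N))$ together with $\sigma_{\mathrm{H}}\geq\sqrt{2}\sigma_m$ gives (ii).

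You are right that the bias term in \eqref{biased-RS-RSG-main-theorem-eqn3} actually carries $1/(L^{\eta}_{\mathrm{H}}D^{\eta}_{\mathrm{H}})$. Trading it for $L^{\eta}_{\mathrm{H}}D^{\eta}_{\mathrm{H}}$, as the paper does silently, is legitimate only under a lower bound such as $L^{\eta}_{\mathrm{H}}D^{\eta}_{\mathrm{H}}\geq 1$. Instead of ``aiming to match the exponents,'' state this as an explicit standing assumption. Without it, the bias contribution is not controlled by $L_{\max},n_{\max},N,\eta$ alone.

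For (iii) you take a genuinely different and better route. The paper bounds $S$ by itself and then multiplies $NS$ by $(2T)^{2+\delta}$. Bounding $S$ alone needs the lower bound \eqref{b-RS-RSG-complexity-eqn8} on $L^{\eta}_{\mathrm{H}}D^{\eta}_{\mathrm{H}}$, which the paper obtains from a lower bound on $P^{\eta}_{\mathrm{H},\max}-P^{\eta}_{\mathrm{H},\min}$ that the assumptions do not supply. You instead use $SNT\leq M$, so the lower-level cost is at most $4SNT\cdot T^{1+\delta}\leq 4MT^{1+\delta}$, with no separate estimate of $S$.

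However, this does not reproduce the stated exponents. Inserting (i) and (ii) yields $\mathcal{O}(L_{\max}^{7+3\delta}n_{\max}^{10+7\delta/2}N^{7+2\delta}\eta^{-(11+4\delta)}\epsilon^{-(6+2\delta)})$. This is smaller than the claimed bound by the factor $L_{\max}^{3/2}n_{\max}^{5/4}N^{1/4}\eta^{-1}$. Your planned check that $L_{\max}^{17/2}$ and $\eta^{-12}$ ``arise'' will therefore fail. Conclude (iii) by domination instead, which is valid for $L_{\max},n_{\max},N\geq 1$ and $\eta\leq 1$.

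What your route buys is a sharper lower-level bound. It rests only on the same lower bound on $L^{\eta}_{\mathrm{H}}D^{\eta}_{\mathrm{H}}$ already needed for (i). The paper, by contrast, invokes a stronger, $L_{\max}$-, $N$- and $\eta$-dependent lower bound when it bounds $S$.
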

\begin{proof}
    (i) We first show that by setting $t_{k} = \lceil k^{1+\delta} \rceil$, it yields $\sum_{k=1}^{T} \epsilon_{k} < c_{\max}(1+1/\delta)$, where $c_{\max}$ is defined in Lemma \ref{hierarchical-lower-level-SA-lemma}. Indeed, we have
    \begin{equation}\label{b-RS-RSG-complexity-eqn1}
        \sum_{k=1}^{T} \epsilon_{k} < \sum_{k=1}^{\infty} \frac{c_{\max}}{k^{1+\delta}} \leq c_{\max} \left( 1 + \int_{1}^{\infty} \frac{1}{x^{1+\delta}} dx \right) = c_{\max} ( 1 + 1/\delta ).
    \end{equation}
    It follows that $\sum_{k=1}^{T}(\mu_{\mathrm{H}, k})^{2} < \infty$ and we have
    \begin{align}\label{b-RS-RSG-complexity-eqn2}
        \mathbb{E}[\| G^{\eta}_{\mathrm{H}, \gamma_{R}}(x^{R}) \|^{2}] \leq \mathcal{O} \left( \frac{ (L^{\eta}_{\mathrm{H}})^{2} (D^{\eta}_{\mathrm{H}})^{2} N^{2} \sigma_{\mathrm{H}} }{L^{\eta}_{\mathrm{H}} D^{\eta}_{\mathrm{H}} \sqrt{M}} \right) = \mathcal{O} \left( \frac{ L^{\eta}_{\mathrm{H}} D^{\eta}_{\mathrm{H}} N^{2} \sigma_{\mathrm{H}} }{\sqrt{M}} \right).
    \end{align}
    It follows from \eqref{b-RS-RSG-sigma} that
    \begin{equation}\label{b-RS-RSG-complexity-eqn3}
        \sigma_{\mathrm{H}} \leq \mathcal{O} \left( \frac{L^{y}_{\max}n_{\max}}{\eta} + L_{\max} n^{1/2}_{\max} \right) \leq \mathcal{O} \left( \frac{n_{\max}}{\eta} (L^{y}_{\max} + L_{\max}) \right)
    \end{equation}
    since $\eta\in (0, 1]$ and $n_{\max}\geq 1$. By Lemma \ref{RS-RSG-bound-biasedness-second-moment}-(i) and \eqref{b-RS-RSG-complexity-eqn1}, we may obtain
    \begin{equation}\label{b-RS-RSG-complexity-eqn4}
        \sum_{k=1}^{T}(\mu_{\mathrm{H}, k})^{2} \leq \mathcal{O} \left( \frac{ n^{2}_{\max} (L^{y}_{\max})^{2} }{\eta^{2}} \right).
    \end{equation}
    For the product $L^{\eta}_{\mathrm{H}} D^{\eta}_{\mathrm{H}}$, similar to the proof of Theorem \ref{RS-RSG-complexity}, we have
    \begin{equation}\label{b-RS-RSG-complexity-eqn5}
        L^{\eta}_{\mathrm{H}} D^{\eta}_{\mathrm{H}} \leq \mathcal{O} \left( \frac{ L_{\max} n^{1/4}_{\max} N^{1/2} }{\eta^{1/2}} \right).
    \end{equation} 
    By plugging \eqref{b-RS-RSG-complexity-eqn3}, \eqref{b-RS-RSG-complexity-eqn4}, and \eqref{b-RS-RSG-complexity-eqn5} into \eqref{b-RS-RSG-complexity-eqn2}, we have the following bound
    \begin{equation*}
         \mathbb{E}[\| G^{\eta}_{\mathrm{H}, \gamma_{R}}(x^{R}) \|^{2}] \leq \mathcal{O} \left( \frac{ L_{\max} (L^{y}_{\max})^{2} (L^{y}_{\max}+L_{\max}) n^{13/4}_{\max} N^{5/2} }{ \eta^{7/2} \sqrt{M} } \right) = \mathcal{O} \left( \frac{ L^{2}_{\max} n^{13/4}_{\max} N^{5/2} }{ \eta^{7/2} \sqrt{M} } \right).
    \end{equation*}
    In order to ensure that $\mathbb{E}[\| G^{\mathrm{HI}, \eta}_{\gamma_{R}}(x^{R}) \|] \leq \epsilon$ holds, we need
    \begin{equation}\label{b-RS-RSG-complexity-eqn6}
        M = \mathcal{O} \left( \frac{ L^{4}_{\max} n^{13/2}_{\max} N^{5} }{ \eta^{7}\epsilon^{-4} } \right).
    \end{equation}
    By setting minimal $M_{0} = 2M$ and $M_{1} = M$, it can be seen that for any $\epsilon > 0$, we need at least $M_{0} + M_{1} = 3M = \mathcal{O}(L^{4}_{\max} n^{13/2}_{\max} N^{5}\eta^{-7}\epsilon^{-4})$ samples.

    \noindent (ii) From the upper and lower bounds on $T$ derived in the proof of Theorem \ref{biased-RS-RSG-main-theorem} and the choice of $S$ in \eqref{b-RS-RSG-Sk}, we may obtain
    \begin{equation*}
        M/(2SN) \leq T \leq M/(SN) \implies T = \mathcal{O} \left( \frac{M}{SN} \right) = \mathcal{O} \left( \frac{ L^{\eta}_{\mathrm{H}} D^{\eta}_{\mathrm{H}} \sqrt{M} }{\sigma_{\mathrm{H}} N} \right).
    \end{equation*}
    By invoking \eqref{b-RS-RSG-complexity-eqn5}, \eqref{b-RS-RSG-complexity-eqn6}, together with the fact that $\sigma_{\mathrm{H}}\geq \sigma_{m}$ from \eqref{b-RS-RSG-sigma}, we may derive that
    \begin{equation}\label{b-RS-RSG-complexity-eqn6.1}
        T = \mathcal{O} \left( \frac{ L^{3}_{\max} n^{7/2}_{\max} N^{2} }{ \eta^{4} \epsilon^{-2} } \right).
    \end{equation}

    \noindent (iii) We know from Algorithm \ref{hierarchical-SA} that the lower-level sample and iteration complexities for solving \eqref{lower-SVI} are the same. Both of them are given by
    \begin{align}\label{b-RS-RSG-complexity-eqn6.2}
        & 2 N S \sum_{k=0}^{T-1} t_{k} = 2 N S \sum_{k=0}^{T-1} \lceil (k+1)^{1+\delta} \rceil = 2 N S \sum_{k=1}^{T} \lceil k^{1+\delta} \rceil \leq 4 N S \sum_{k=1}^{T} k^{1+\delta} \leq 4 N S \int_{1}^{T+1} x^{1+\delta} dx \notag \\
        &= 4 N S \left( \frac{1}{2+\delta} x^{2+\delta} \right) \Big\vert^{T+1}_{1} \leq \frac{4NS}{2+\delta} (T+1)^{2+\delta} \leq \frac{4NS}{2+\delta} (2T)^{2+\delta}.
    \end{align}
    We claim that
    \begin{equation}\label{b-RS-RSG-complexity-eqn7}
        S = \left\lceil \frac{\sigma_{\mathrm{H}}\sqrt{6M}}{4L^{\eta}_{\mathrm{H}}D^{\eta}_{\mathrm{H}}} \right\rceil  \leq \mathcal{O} \left( \frac{L^{4}_{\max}n^{17/4}_{\max}N^{9/4}}{(L_{\max})^{3/2}(L^{y}_{\max})^{2}\eta^{4}\epsilon^{2}} \right) = \mathcal{O} \left( \frac{L^{5/2}_{\max}n^{17/4}_{\max}N^{9/4}}{\eta^{4}\epsilon^{2}} \right).
    \end{equation}
    Indeed, similar to the proof of Theorem \ref{RS-RSG-complexity}, we may derive from \eqref{RS-RSG-complexity-eqn3} and \eqref{RS-RSG-complexity-eqn3.5} that
    \begin{equation*}
        L^{\eta}_{\mathrm{H}}D^{\eta}_{\mathrm{H}} = (L^{\eta}_{\mathrm{H}})^{1/2} (P^{\eta}_{\mathrm{H}, \max} - P^{\eta}_{\mathrm{H}, \min}) \geq \frac{L^{1/2}_{\max}N^{1/4}}{\eta^{1/2}} (P_{\mathrm{H}, m}(x^{\max}) - P_{\mathrm{H}, m}(x^{\min})),
    \end{equation*}
    where $P_{\mathrm{H}, m}(x) \triangleq P_{\mathrm{H}}(x) - \sum_{i=1}^{N} h_{i}(x_{i}, y_{i}(x_{i}))$, implying that
    \begin{equation}\label{b-RS-RSG-complexity-eqn8}
        \frac{1}{L^{\eta}_{\mathrm{H}}D^{\eta}_{\mathrm{H}}} \leq \mathcal{O} \left( \frac{\eta^{1/2}}{L^{1/2}_{\max}N^{1/4}} \right).
    \end{equation}
    By combing \eqref{b-RS-RSG-complexity-eqn3}, \eqref{b-RS-RSG-complexity-eqn6}, and \eqref{b-RS-RSG-complexity-eqn8}, it leads to \eqref{b-RS-RSG-complexity-eqn7}. By plugging \eqref{b-RS-RSG-complexity-eqn6.1} and \eqref{b-RS-RSG-complexity-eqn7} into \eqref{b-RS-RSG-complexity-eqn6.2}, it follows that
    \begin{equation*}
        2 N S \sum_{k=0}^{T-1} t_{k} \leq \mathcal{O} \left( \frac{ L^{17/2+3\delta}_{\max} n^{45/4+7\delta/2}_{\max} N^{29/4+2\delta} }{\eta^{12+4\delta}\epsilon^{6+2\delta}} \right),
    \end{equation*}
    which completes the proof.
\end{proof}

\section{Numerical Experiments}\label{Sec-6}

In this section, we evaluate the performances of RS-RSG and biased RS-RSG schemes through two illustrative examples introduced in subsection \ref{two-motivating-examples}.

\subsection{Stochastic Nonconvex Nonsmooth Potential Cournot Games}

In this subsection, we consider the stochastic $N$-player Nash-Cournot game \eqref{NC-i} which admits the following potential function
\begin{equation*}
    P(x) = \sum_{i=1}^{N} \mathbb{E}[\Tilde{c}_{i}(\xi)]g_{i}(x_{i}) - \mathbb{E}[a(\xi)]\sum_{i=1}^{N}x_{i} + \mathbb{E}[b(\xi)]\sum_{i=1}^{N}x^{2}_{i} + \mathbb{E}[b(\xi)]\sum_{1\leq i< j\leq N}x_{i}x_{j}.
\end{equation*}
In this example, we consider $N = 6$ players. Suppose that each $X_{i} = [0, 12]$, $\xi \sim \mathrm{U}\:[0, 1]$, $\Tilde{c}_{i}(\xi) = (5 + i/(8N))\xi$ for any $i\in [N]$. The random variables $a(\xi)$ and $b(\xi)$ are given by $a(\xi) = 4\xi$ and $b(\xi) = 0.02\xi$, respectively. Suppose that $g_{i}(x_{i})$ is an increasing piecewise-linear concave function defined by $g_{i}(x_{i}) = \min\{ x_{i}, \frac{1}{2}x_{i} + 2 \}$. We set the initialization $x^{0} = 12\mathbf{e}_{6}$ and sample budget $M = 1\mathbf{e}8$. 

In the numerical experiment, we test three smoothing parameters $\eta_{1} = 0.3$, $\eta_{2} = 0.5$, and $\eta_{3} = 0.8$. We adopt the expected square residual $\mathbb{E}[\| G^{\eta}_{\gamma_{R}}(x^{R}) \|^{2}]$ as the convergence measure. The convergence in expectation is shown in Figure \ref{RS-RSG-figure}, averaged over $10$ sample paths. Tables \ref{RS-RSG-table} illustrates the required iterations and samples to achieve prescribed residual levels for different $\eta$. We may observe that, although a smaller $\eta > 0$ results in a better approximation (see Theorem \ref{rs-approx-main-thm}), it also requires more iterations and samples.

\begin{figure}[H]
    \centering
    \begin{subfigure}{0.45\textwidth}
        \centering
        \includegraphics[width=\textwidth]{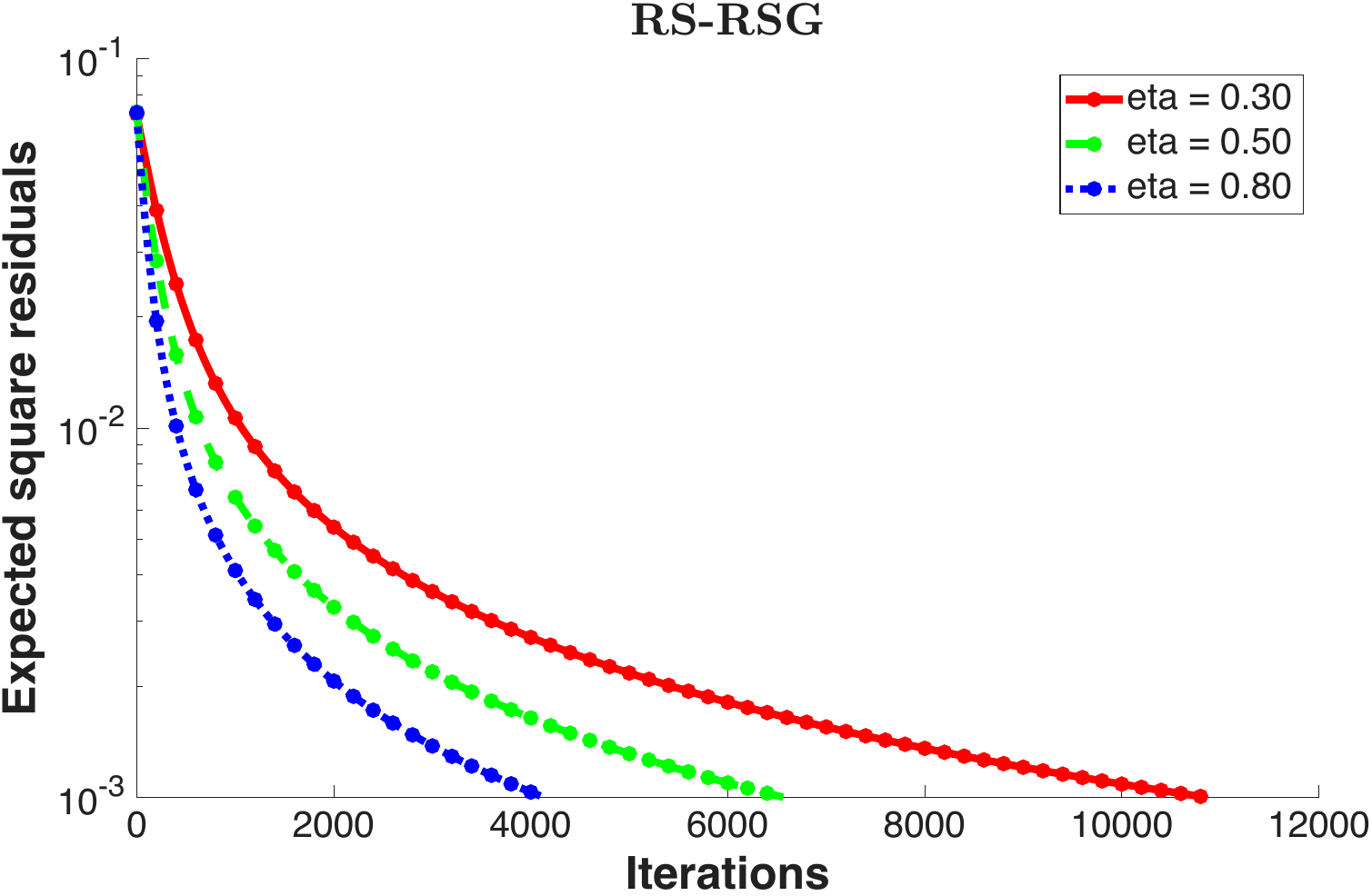}
        \caption{RS-RSG}
        \label{RS-RSG-figure}
    \end{subfigure}
    \hfill
    \begin{subfigure}{0.45\textwidth}
        \centering
        \includegraphics[width=\textwidth]{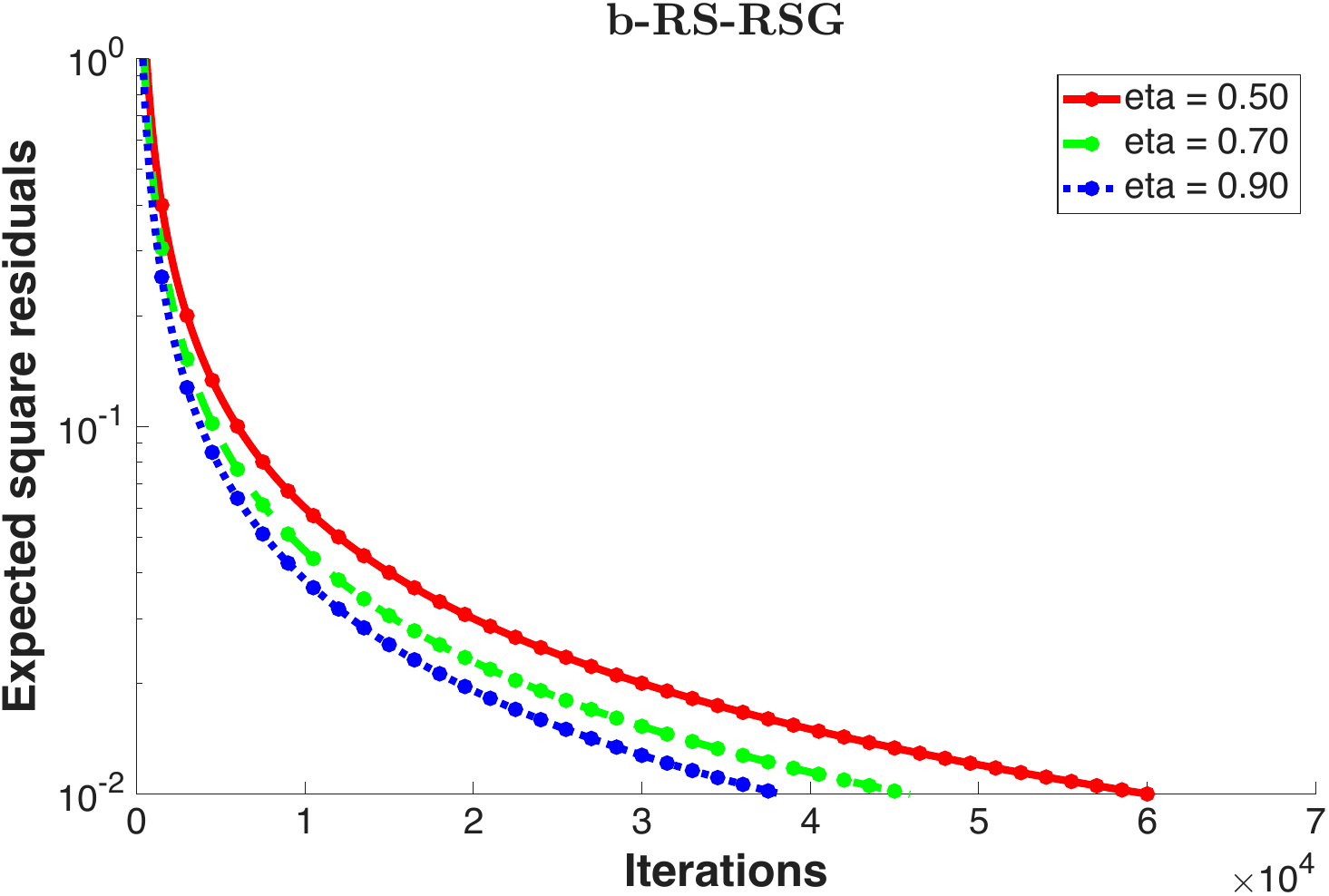}
        \caption{b-RS-RSG}
        \label{b-RS-RSG-figure}
    \end{subfigure}
    \caption{The convergences of RS-RSG and b-RS-RSG.}
    \label{RS-RSG-comparison-figure}
\end{figure}

\begin{table}
\centering
\scriptsize
{\renewcommand{\arraystretch}{1.8}
\begin{tabular}{@{\hspace{10pt}}c@{\hspace{22pt}}c@{\hspace{22pt}}c@{\hspace{22pt}}c@{\hspace{22pt}}c@{\hspace{22pt}}c@{\hspace{10pt}}}
\toprule
\toprule
\small{$\mathbb{E}[\| G^{\eta}_{\gamma_{R}}(x^{R}) \|^{2}]$} & \small{$1.0\times 10^{-2}$} & \small{$7.5\times 10^{-3}$} & \small{$5.0\times 10^{-3}$} & \small{$2.5\times 10^{-3}$} & \small{$1.0\times 10^{-3}$} \\
\midrule
\midrule
\small{$T\:(\eta_{1} = 0.3)$} & \small{$1.07\mathbf{e}3$} & \small{$1.43\mathbf{e}3$} & \small{$2.16\mathbf{e}3$} & \small{$4.34\mathbf{e}3$} & \small{$1.09\mathbf{e}4$} \\
\small{$T\:(\eta_{2} = 0.5)$} & \small{$6.45\mathbf{e}2$} & \small{$8.65\mathbf{e}2$} & \small{$1.30\mathbf{e}3$} & \small{$2.62\mathbf{e}3$} & \small{$6.56\mathbf{e}3$} \\
\small{$T\:(\eta_{3} = 0.8)$} & \small{$4.07\mathbf{e}2$} & \small{$5.45\mathbf{e}2$} & \small{$8.22\mathbf{e}2$} & \small{$1.65\mathbf{e}3$} & \small{$4.13\mathbf{e}3$} \\
\midrule
\midrule
\small{$M\:(\eta_{1} = 0.3)$} & \small{$9.09\mathbf{e}6$} & \small{$1.22\mathbf{e}7$} & \small{$1.84\mathbf{e}7$} & \small{$3.69\mathbf{e}7$} & \small{$9.24\mathbf{e}7$} \\
\small{$M\:(\eta_{2} = 0.5)$} & \small{$7.07\mathbf{e}6$} & \small{$9.48\mathbf{e}6$} & \small{$1.43\mathbf{e}7$} & \small{$2.87\mathbf{e}7$} & \small{$7.19\mathbf{e}7$} \\
\small{$M\:(\eta_{3} = 0.8)$} & \small{$5.62\mathbf{e}6$} & \small{$7.53\mathbf{e}6$} & \small{$1.14\mathbf{e}7$} & \small{$2.28\mathbf{e}7$} & \small{$5.71\mathbf{e}7$} \\
\bottomrule
\bottomrule
\end{tabular}
}
\caption{The required iteration and sample complexities to achieve prescribed residual levels.}
\label{RS-RSG-table}
\end{table}

\subsection{Stochastic Nonconvex Nonsmooth Potential Hierarchical Games}

In this subsection, we consider a special class of stochastic $N$-player two-stage hierarchical game inspired by \cite{cui-shanbhag-staudigl-2025}, but slightly modified to account for uncertainty in the lower-level problem. Suppose that the $i$th leader's problem is given by
\begin{equation*}
    \min_{x_{i}\in X_{i}} ~ C_{i}(x_{i}) - \mathbb{E}[ \Tilde{p}(x_{i} + X_{-i} + y_{i}(x_{i}), \xi) ], ~ \forall i\in [N],
\end{equation*}
and the corresponding follower's problem is parameterized by the leader's decision $x_{i}$, given by
\begin{equation*}
    \min_{y_{i}\in Y_{i}} ~ c_{i}(y_{i}) - \mathbb{E}[\Tilde{p}(x_{i} + y_{i}, \xi)]y_{i},
\end{equation*}
where $C_{i}(\bullet)$ and $c_{i}(\bullet)$ are cost functions of leader $i$ and follower $i$, respectively. For given $\xi$, the linear inverse demand function is defined as $\Tilde{p}(u, \xi) = a(\xi) - b(\xi) u$ for random variables $a(\xi)$ and $b(\xi)$. We can show that the above two-stage stochastic hierarchical game admits the potential function
\begin{equation*}
    P(x) = \sum_{i=1}^{N} C_{i}(x_{i}) + \mathbb{E}[b(\xi)]\sum_{i=1}^{N} x_{i}y_{i}(x_{i}) - \mathbb{E}[a(\xi)]\sum_{i=1}^{N}x_{i} + \mathbb{E}[b(\xi)]\sum_{i=1}^{N}x^{2}_{i} + \mathbb{E}[b(\xi)]\sum_{1\leq i< j\leq N}x_{i}x_{j}.
\end{equation*}
In this example, we consider $N = 4$ leader players. Suppose that each $X_{i} = [0, 20]$, $Y_{i} = [0, 200]$, $C_{i}(x_{i}) \triangleq \mathbb{E}[5+\xi] \log{(x_{i}+1)}$, and $c_{i}(y_{i}) \triangleq \mathbb{E}[1+0.2\xi] y_{i}$ for any $i\in [N]$. Further, we assume that $\xi \sim \mathrm{U}\:[-1, 1]$, $a(\xi) = 2\xi + 8$, and $b(\xi) = 0.01\xi + 0.02$. We set the initialization $x^{0} = 19\mathbf{e}_{4}$ and sample budget $M = 6\mathbf{e}9$. 

In the numerical experiment, we test three smoothing parameters $\eta_{1} = 0.5$, $\eta_{2} = 0.7$, and $\eta_{3} = 0.9$. We adopt the expected square residual $\mathbb{E}[\| G^{\eta}_{\mathrm{H}, \gamma_{R}}(x^{R}) \|^{2}]$ as the convergence measure. We set the number of lower-level iterations to be $t_{k} = \lceil (k+1)^{1+\delta}\rceil$, as in Theorem~\ref{overall-complexities-b-RS-RSG}. The convergence in expectation is shown in Figure \ref{b-RS-RSG-figure}, averaged over $10$ sample paths. Table \ref{b-RS-RSG-table} illustrates the required iteration and sample complexities of biased RS-RSG scheme. We may see that a smaller $\eta$ leads to more iterations and samples again. At the same time, the large Lipschitz continuity constant of this problem necessitates a small stepsize, and consequently the scheme requires on the order of $10^{4}$ iterations to attain a satisfactory level of accuracy.

\begin{table}
\centering
\scriptsize
{\renewcommand{\arraystretch}{1.8}
\begin{tabular}{@{\hspace{10pt}}c@{\hspace{22pt}}c@{\hspace{22pt}}c@{\hspace{22pt}}c@{\hspace{22pt}}c@{\hspace{22pt}}c@{\hspace{10pt}}}
\toprule
\toprule
\small{$\mathbb{E}[\| G^{\mathrm{HI}, \eta}_{\gamma_{R}}(x^{R}) \|^{2}]$} & \small{$1.0\times 10^{-1}$} & \small{$7.5\times 10^{-2}$} & \small{$5.0\times 10^{-2}$} & \small{$2.5\times 10^{-2}$} & \small{$1.0\times 10^{-2}$} \\
\midrule
\midrule
\small{$T\:(\eta_{1} = 0.5)$} & \small{$5.99\mathbf{e}3$} & \small{$7.99\mathbf{e}3$} & \small{$1.20\mathbf{e}4$} & \small{$2.40\mathbf{e}4$} & \small{$5.99\mathbf{e}4$} \\
\small{$T\:(\eta_{2} = 0.7)$} & \small{$4.61\mathbf{e}3$} & \small{$6.14\mathbf{e}3$} & \small{$9.21\mathbf{e}3$} & \small{$1.84\mathbf{e}4$} & \small{$4.61\mathbf{e}4$} \\
\small{$T\:(\eta_{3} = 0.9)$} & \small{$3.81\mathbf{e}3$} & \small{$5.08\mathbf{e}3$} & \small{$7.62\mathbf{e}3$} & \small{$1.52\mathbf{e}4$} & \small{$3.81\mathbf{e}4$} \\
\midrule
\midrule
\small{$M^{\mathrm{up}}\:(\eta_{1} = 0.5)$} & \small{$5.39\mathbf{e}8$} & \small{$7.18\mathbf{e}8$} & \small{$1.08\mathbf{e}9$} & \small{$2.16\mathbf{e}9$} & \small{$5.39\mathbf{e}9$} \\
\small{$M^{\mathrm{up}}\:(\eta_{2} = 0.7)$} & \small{$4.80\mathbf{e}8$} & \small{$5.97\mathbf{e}8$} & \small{$8.96\mathbf{e}8$} & \small{$1.79\mathbf{e}9$} & \small{$4.48\mathbf{e}9$} \\
\small{$M^{\mathrm{up}}\:(\eta_{3} = 0.9)$} & \small{$3.97\mathbf{e}8$} & \small{$5.29\mathbf{e}8$} & \small{$7.94\mathbf{e}8$} & \small{$1.59\mathbf{e}9$} & \small{$3.97\mathbf{e}9$} \\
\midrule
\midrule
\small{$M^{\mathrm{low}}\:(\eta_{1} = 0.5)$} & \small{$3.23\mathbf{e}12$} & \small{$5.74\mathbf{e}12$} & \small{$1.29\mathbf{e}13$} & \small{$5.16\mathbf{e}13$} & \small{$3.23\mathbf{e}14$} \\
\small{$M^{\mathrm{low}}\:(\eta_{2} = 0.7)$} & \small{$2.06\mathbf{e}12$} & \small{$3.67\mathbf{e}12$} & \small{$8.25\mathbf{e}12$} & \small{$3.30\mathbf{e}13$} & \small{$2.06\mathbf{e}14$} \\
\small{$M^{\mathrm{low}}\:(\eta_{3} = 0.9)$} & \small{$1.51\mathbf{e}12$} & \small{$2.69\mathbf{e}12$} & \small{$6.05\mathbf{e}12$} & \small{$2.42\mathbf{e}13$} & \small{$1.51\mathbf{e}14$} \\
\bottomrule
\bottomrule
\end{tabular}
}
\caption{The required iteration complexities to achieve prescribed residual levels.}
\label{b-RS-RSG-table}
\end{table}

\section{Conclusions}\label{Sec-7}

Efficient algorithms for solving stochastic nonconvex nonsmooth games remain largely unexplored. This paper proposes a randomized stochastic gradient scheme, together with its smoothing-enabled and biased variants, for solving stochastic nonconvex nonsmooth potential games. To the best of our knowledge, these findings are novel in the computation of equilibria in stochastic nonconvex nonsmooth games. Several open questions remain for future investigation, such as the development of asynchronous RSG scheme and its biased variant. We hope this work will stimulate researchers' interest in a broader paradigm of stochastic noncooperative games beyond the classical convex and smooth regimes.

\appendix

\section{Proof of Theorem \ref{RSG-convergence}}\label{proof-smooth-RSG}

\begin{proof}
    (i) We define $\delta^{k} \triangleq \Tilde{F}(x^{k}, \xi^{k}) - F(x^{k})$ for any $k\geq 0$ with $i$th component $\delta^{k}_{i} = \Tilde{g}_{i}(x^{k}, \xi_{i}^{k}) - \nabla_{x_{i}}f_{i}(x^{k})$. Since $P$ is $L$-smooth, it follows that
    \begin{align*}
        P(x^{k+1}) &\leq P(x^{k}) + \langle \nabla P(x^{k}), x^{k+1}-x^{k} \rangle + \frac{L}{2}\| x^{k+1}-x^{k} \|^{2} \\
        &= P(x^{k}) - \gamma_{k} \langle \nabla P(x^{k}), \Tilde{G}_{\gamma_{k}}(x^{k}, \xi^{k}) \rangle + \frac{L}{2}(\gamma_{k})^{2} \| \Tilde{G}_{\gamma_{k}}(x^{k}, \xi^{k}) \|^{2} \\
        &= P(x^{k}) - \gamma_{k} \langle F(x^{k}), \Tilde{G}_{\gamma_{k}}(x^{k}, \xi^{k}) \rangle + \frac{L}{2}(\gamma_{k})^{2} \| \Tilde{G}_{\gamma_{k}}(x^{k}, \xi^{k}) \|^{2} \quad \text{(by Proposition \ref{potentiality-integrability})} \\
        &= P(x^{k}) - \gamma_{k} \langle \Tilde{F}(x^{k}, \xi^{k}), \Tilde{G}_{\gamma_{k}}(x^{k}, \xi^{k}) \rangle + \frac{L}{2}(\gamma_{k})^{2} \| \Tilde{G}_{\gamma_{k}}(x^{k}, \xi^{k}) \|^{2} + \gamma_{k} \langle \delta^{k}, \Tilde{G}_{\gamma_{k}}(x^{k}, \xi^{k}) \rangle.
    \end{align*}
    By \cite[Lemma 1]{ghadimi-lan-zhang-2016}, we have
    \begin{equation*}
        \langle \Tilde{F}(x^{k}, \xi^{k}), \Tilde{G}_{\gamma_{k}}(x^{k}, \xi^{k}) \rangle \geq \| \Tilde{G}_{\gamma_{k}}(x^{k}, \xi^{k}) \|^{2}.
    \end{equation*}
    Therefore we obtain that
    \begin{align*}
        P(x^{k+1}) &\leq P(x^{k}) - \gamma_{k} \| \Tilde{G}_{\gamma_{k}}(x^{k}, \xi^{k}) \|^{2} + \frac{L}{2}(\gamma_{k})^{2} \| \Tilde{G}_{\gamma_{k}}(x^{k}, \xi^{k}) \|^{2} + \gamma_{k} \langle \delta^{k}, G_{\gamma_{k}}(x^{k}) \rangle + \gamma_{k} \langle \delta^{k}, \Tilde{G}_{\gamma_{k}}(x^{k}, \xi^{k}) - G_{\gamma_{k}}(x^{k}) \rangle \\
        &\leq P(x^{k}) - (\gamma_{k}-\frac{L}{2}(\gamma_{k})^{2}) \| \Tilde{G}_{\gamma_{k}}(x^{k}, \xi^{k}) \|^{2} + \gamma_{k} \langle \delta^{k}, G_{\gamma_{k}}(x^{k}) \rangle + \gamma_{k} \|\delta^{k}\| \| \Tilde{G}_{\gamma_{k}}(x^{k}, \xi^{k}) - G_{\gamma_{k}}(x^{k}) \| \\
        &\leq P(x^{k}) - (\gamma_{k}-\frac{L}{2}(\gamma_{k})^{2}) \| \Tilde{G}_{\gamma_{k}}(x^{k}, \xi^{k}) \|^{2} + \gamma_{k} \langle \delta^{k}, G_{\gamma_{k}}(x^{k}) \rangle + \gamma_{k} \|\delta^{k}\|^{2},
    \end{align*}
    where the last inequality follows from \cite[Proposition 1]{ghadimi-lan-zhang-2016}. Summing up the above inequalities for $k = 1, \dots, T$ and noting that $0 < \gamma_{k}\leq 1/L < 2/L$ with $0 < \gamma_{k} < 1/L$ for at least one $k$, we obtain
    \begin{equation}\label{RSGR-proof-1}
        \begin{aligned}
            \sum_{k=1}^{T} (\gamma_{k}-L(\gamma_{k})^{2}) \| \Tilde{G}_{\gamma_{k}}(x^{k}, \xi^{k}) \|^{2} &\leq \sum_{k=1}^{T} (\gamma_{k}-\frac{L}{2}(\gamma_{k})^{2}) \| \Tilde{G}_{\gamma_{k}}(x^{k}, \xi^{k}) \|^{2} \\
            &\leq P(x^{1}) - P(x^{T+1}) + \sum_{k=1}^{T} \{ \gamma_{k} \langle \delta^{k}, G_{\gamma_{k}}(x^{k}) \rangle + \gamma_{k} \|\delta^{k}\|^{2} \} \\
            &\leq P_{\text{max}} - P_{\text{min}} + \sum_{k=1}^{T} \{ \gamma_{k} \langle \delta^{k}, G_{\gamma_{k}}(x^{k}) \rangle + \gamma_{k} \|\delta^{k}\|^{2} \},
        \end{aligned}
    \end{equation}
    where $P_{\text{max}}$ and $P_{\text{min}}$ are the maximum and the minimum of $P$ over $X$, respectively. By the unbiasedness assumption $\mathrm{(A2)}$, we have $\mathbb{E}[\langle \delta^{k}, G_{\gamma_{k}}(x^{k}) \rangle \mid \xi^{[k-1]}] = 0$. In addition, denoting $\delta^{k}_{i,l} = \nabla_{x_{i}}\Tilde{f}_{i}(x^{k}_{i}, x^{k}_{-i}, \xi^{k}_{i,l}) - \nabla_{x_{i}}f_{i}(x^{k})$, $i = 1, \dots, N$, $l = 1, \dots, S_{k}$, $k = 1, \dots, T$, $\Lambda^{k}_{i, j} = \sum_{l=1}^{j} \delta^{k}_{i,l}$, $j = 1, \dots, S_{k}$, and $\Lambda^{k}_{i, 0} = 0$, and noting that for any $i\in [N]$, we have that
    \begin{equation*}
        \mathbb{E}[ \langle \Lambda^{k}_{i, l-1}, \delta^{k}_{i, l} \rangle \mid \Lambda^{k}_{i, l-1} ] = 0,~ \forall l = 1, \dots, S_{k}.
    \end{equation*}
    Then we have
    \begin{align*}
        \mathbb{E}[ \|\Lambda^{k}_{i, S_{k}}\|^{2} \mid \Lambda^{k}_{i, S_{k}-1} ] &= \mathbb{E}[ \|\Lambda^{k}_{i, S_{k}-1}\|^{2} + 2\langle \Lambda^{k}_{i, S_{k}-1}, \delta^{k}_{i, S_{k}} \rangle + \|\delta^{k}_{i, S_{k}}\|^{2} \mid \Lambda^{k}_{i, S_{k}-1} ] \\
        &= \mathbb{E}[ \|\Lambda^{k}_{i, S_{k}-1}\|^{2} \mid \Lambda^{k}_{i, S_{k}-1} ] + \mathbb{E}[ \|\delta^{k}_{i, S_{k}}\|^{2} \mid \Lambda^{k}_{i, S_{k}-1} ].
    \end{align*}
    By taking unconditional expectations on both sides, it follows that
    \begin{align*}
        \mathbb{E}[ \|\Lambda^{k}_{i, S_{k}}\|^{2} ] &= \mathbb{E}[ \|\Lambda^{k}_{i, S_{k}-1}\|^{2} ] + \mathbb{E}[ \|\delta^{k}_{i, S_{k}}\|^{2} ] \\ 
        &= \mathbb{E}[ \|\Lambda^{k}_{i, S_{k}-2}\|^{2} ] + \mathbb{E}[\|\delta^{k}_{i, S_{k-1}}\|^{2} ] + \mathbb{E}[\|\delta^{k}_{i, S_{k}}\|^{2} ] = \cdots = \sum_{l=1}^{S_{k}} \mathbb{E}[ \|\delta^{k}_{i, l}\|^{2} ].
    \end{align*}
    By the fact that $\Tilde{g}_{i}(x^{k}, \xi_{i}^{k}) = \frac{1}{S_{k}}\sum_{l=1}^{S_{k}} \nabla_{x_{i}}\Tilde{f}_{i}(x^{k}_{i}, x^{k}_{-i}, \xi^{k}_{i, l})$, it leads to
    \begin{align*}
        \Tilde{g}_{i}(x^{k}, \xi_{i}^{k}) - \nabla_{x_{i}}f_{i}(x^{k}) = \frac{1}{S_{k}}\sum_{l=1}^{S_{k}} (\nabla_{x_{i}}\Tilde{f}_{i}(x^{k}_{i}, x^{k}_{-i}, \xi^{k}_{i, l}) - \nabla_{x_{i}}f_{i}(x^{k})) \implies \delta^{k}_{i} = \frac{1}{S_{k}}\sum_{l=1}^{S_{k}} \delta^{k}_{i, l}.
    \end{align*}
    Since $\mathbb{E}[\|\delta^{k}_{i, l}\|^{2}] \leq \sigma^{2}$ for any $l = 1, \dots, S_{k}$ by assumption $\mathrm{(A3)}$, it follows that
    \begin{equation}\label{RSGR-proof-2}
        \mathbb{E}[\|\delta^{k}_{i}\|^{2}] = \mathbb{E}\left[ \left\| \frac{1}{S_{k}}\sum_{l=1}^{S_{k}} \delta^{k}_{i, l} \right\|^{2} \right] = \frac{1}{S^{2}_{k}} \mathbb{E}[ \| \Lambda^{k}_{i, S_{k}} \|^{2} ] = \frac{1}{S^{2}_{k}} \sum_{l=1}^{S_{k}} \mathbb{E}[ \|\delta^{k}_{i, l}\|^{2} ] \leq \frac{\sigma^{2}}{S_{k}} \implies \mathbb{E}[\|\delta^{k}\|^{2}]\leq \frac{\sigma^{2}N}{S_{k}}.
    \end{equation}
    We sequentially take the conditional expectations $\mathbb{E}[\bullet \mid \xi^{[T-1]}]$, $\mathbb{E}[\bullet \mid \xi^{[T-2]}]$, \dots, $\mathbb{E}[\bullet \mid \xi^{[0]}]$ and finally take the unconditional expectation on both sides of \eqref{RSGR-proof-1}, by tower property we have
    \begin{align*}
        \sum_{k=1}^{T}(\gamma_{k}-L(\gamma_{k})^{2})\mathbb{E}[\| \Tilde{G}_{\gamma_{k}}(x^{k}, \xi^{k}) \|^{2}] \leq P_{\text{max}} - P_{\text{min}} + \sum_{k=1}^{T}\gamma_{k}\mathbb{E}[\|\delta^{k}\|^{2}] \leq P_{\text{max}} - P_{\text{min}} + \sigma^{2}N \sum_{k=1}^{T} \frac{\gamma_{k}}{S_{k}}.
    \end{align*}
    Then, since $\sum_{k=1}^{T}(\gamma_{k}-L(\gamma_{k})^{2})>0$ by our assumption, dividing both sides of the above inequality by $\sum_{k=1}^{T}(\gamma_{k}-L(\gamma_{k})^{2})$ and noting that
    \begin{equation*}
        \mathbb{E}[\| \Tilde{G}_{\gamma_{R}}(x^{R}, \xi^{R}) \|^{2}] = \frac{\sum_{k=1}^{T}(\gamma_{k}-L(\gamma_{k})^{2})\mathbb{E}[\| \Tilde{G}_{\gamma_{k}}(x^{k}, \xi^{k}) \|^{2}]}{\sum_{k=1}^{T}(\gamma_{k}-L(\gamma_{k})^{2})} \leq \frac{LD^{2}+(\sigma^{2}N)\sum_{k=1}^{T}(\gamma_{k}/S_{k})}{\sum_{k=1}^{T}(\gamma_{k}-L(\gamma_{k})^{2})},
    \end{equation*}
    we establish (i).

    \noindent (ii) We first show that
    \begin{equation}\label{RSGR-proof-3}
        \mathbb{E}[\| G_{\gamma_{R}}(x^{R}) \|^{2}] \leq \frac{8L^{2}D^{2}}{T} + \frac{6\sigma^{2}N}{S}.
    \end{equation}
    By (i) and $S_{k} = S$, we have that
    \begin{equation*}
        \mathbb{E}[\| \Tilde{G}_{\gamma_{R}}(x^{R}, \xi^{R}) \|^{2}] \leq \frac{LD^{2}+\frac{\sigma^{2}N}{S}\sum_{k=1}^{T}\gamma_{k}}{\sum_{k=1}^{T}(\gamma_{k}-L(\gamma_{k})^{2})},
    \end{equation*}
    which together with $\gamma_{k} = 1/(2L)$ for all $k = 1, \dots, T$ imply that
    \begin{equation*}
        \mathbb{E}[\| \Tilde{G}_{\gamma_{R}}(x^{R}, \xi^{R}) \|^{2}] \leq \frac{LD^{2} + \frac{\sigma^{2}NT}{2SL}}{\frac{T}{4L}} = \frac{4L^{2}D^{2}}{T} + \frac{2\sigma^{2}N}{S}.
    \end{equation*}
    Then, by \cite[Proposition 1]{ghadimi-lan-zhang-2016}, we have from the above inequality and \eqref{RSGR-proof-2} that
    \begin{align*}
        \mathbb{E}[\| G_{\gamma_{R}}(x^{R}) \|^{2}] &\leq 2 \mathbb{E}[\| \Tilde{G}_{\gamma_{R}}(x^{R}, \xi^{R}) \|^{2}] + 2 \mathbb{E}[\| G_{\gamma_{R}}(x^{R}) - \Tilde{G}_{\gamma_{R}}(x^{R}, \xi^{R}) \|^{2}] \\
        &\leq 2 \left( \frac{4L^{2}D^{2}}{T} + \frac{2\sigma^{2}N}{S} \right) + 2\mathbb{E}[\| F(x^{R}) - \Tilde{F}(x^{R}, \xi^{R}) \|^{2}] \\
        &= 2 \left( \frac{4L^{2}D^{2}}{T} + \frac{2\sigma^{2}N}{S} \right) + 2\mathbb{E}[\|\delta^{R}\|^{2}] \\
        &= \frac{8L^{2}D^{2}}{T} + \frac{4\sigma^{2}N}{S} + 2\mathbb{E} \left[ \sum_{i=1}^{N}\|\delta^{R}_{i}\|^{2} \right] \\
        &\leq \frac{8L^{2}D^{2}}{T} + \frac{6\sigma^{2}N}{S},
    \end{align*}
    which completes the proof of \eqref{RSGR-proof-3}. We know that the RSG algorithm can perform at most $T = \lfloor M/(SN) \rfloor$ iterations. Obviously, we have $T \geq M/(2SN)$ since we assume that $M$ is sufficiently large. With this observation and \eqref{RSGR-proof-3}, together with \eqref{S-selection}, it follows that
    \begin{align*}
        \mathbb{E}[\| G_{\gamma_{R}}(x^{R}) \|^{2}] &\leq \frac{16 S L^{2}D^{2} N}{M} + \frac{6\sigma^{2}N}{S} \\
        &\leq \frac{16 L^{2}D^{2} N}{M} \left( 1 + \frac{\sigma \sqrt{6M}}{4LD} \right) + 6\sigma^{2}N \cdot \frac{4LD}{\sigma\sqrt{6M}} \\
        &= \frac{16 L^{2}D^{2} N}{M} + \frac{8\sqrt{6}LDN\sigma}{\sqrt{M}},
    \end{align*}
    which completes the proof.
\end{proof}

~

\noindent\textbf{Acknowledgments.} The author is grateful to Uday V. Shanbhag for providing helpful suggestions.

\bibliographystyle{plain}
\bibliography{references}

@inproceedings{alacaoglu-kim-wright-2024,
  title={Revisiting inexact fixed-point iterations for min-max problems: stochasticity and structured nonconvexity},
  author={Alacaoglu, Ahmet and Kim, Donghwan and Wright, Stephen J},
  booktitle={International Conference on Machine Learning},
  pages={840--878},
  year={2024}
}

@article{aussel-svensson-2020,
  title={A short state of the art on multi-leader-follower games},
  author={Aussel, Didier and Svensson, Anton},
  journal={Bilevel Optimization: Advances and Next Challenges},
  pages={53--76},
  year={2020},
  publisher={Springer}
}

@article{arjevani-carmon-duchi-foster-srebro-woodworth-2023,
  title={Lower bounds for non-convex stochastic optimization},
  author={Arjevani, Yossi and Carmon, Yair and Duchi, John C. and Foster, Dylan J. and Srebro, Nathan and Woodworth, Blake},
  journal={Mathematical Programming},
  volume={199},
  number={1},
  pages={165--214},
  year={2023},
  publisher={Springer}
}

@article{burke-curtis-lewis-overton-simoes-2020,
  title={Gradient sampling methods for nonsmooth optimization},
  author={Burke, James V. and Curtis, Frank E. and Lewis, Adrian S. and Overton, Michael L. and Sim{\~o}es, Lucas E. A.},
  journal={Numerical Nonsmooth Optimization: State of The Art Algorithms},
  pages={201--225},
  year={2020},
  publisher={Springer}
}

@article{burke-lewis-overton-2005,
  title={A robust gradient sampling algorithm for nonsmooth, nonconvex optimization},
  author={Burke, James V. and Lewis, Adrian S. and Overton, Michael L.},
  journal={SIAM Journal on Optimization},
  volume={15},
  number={3},
  pages={751--779},
  year={2005},
  publisher={SIAM}
}

@inproceedings{cai-alacaoglu-diakonikolas-2023,
  title={Variance reduced halpern iteration for finite-sum monotone inclusions},
  author={Cai, Xufeng and Alacaoglu, Ahmet and Diakonikolas, Jelena},
  booktitle={International Conference on Learning Representations},
  volume={2024},
  pages={42693--42725},
  year={2023}
}

@inproceedings{cai-oikonomou-zheng-2022,
  title={Accelerated algorithms for monotone inclusion and constrained nonconvex-nonconcave min-max optimization},
  author={Cai, Yang and Oikonomou, Argyris and Zheng, Weiqiang},
  booktitle={OPT 2022: Optimization for Machine Learning (NeurIPS workshop)},
  year={2022}
}

@inproceedings{cai-zheng-2022,
  title={Accelerated single-call methods for constrained min-max optimization},
  author={Cai, Yang and Zheng, Weiqiang},
  booktitle={OPT 2022: Optimization for Machine Learning (NeurIPS workshop)},
  year={2022}
}

@inproceedings{chen-sun-yin-2021,
  title={Closing the gap: Tighter analysis of alternating stochastic gradient methods for bilevel problems},
  author={Chen, Tianyi and Sun, Yuejiao and Yin, Wotao},
  booktitle={Advances in Neural Information Processing Systems},
  volume={34},
  pages={25294--25307},
  year={2021}
}

@book{cui-pang-2021,
  title={Modern Nonconvex Nondifferentiable Optimization},
  author={Cui, Ying and Pang, Jong-Shi},
  year={2021},
  publisher={MOS-SIAM Series on Optimization}
}

@article{cui-shanbhag-2023,
  title={On the computation of equilibria in monotone and potential stochastic hierarchical games},
  author={Cui, Shisheng and Shanbhag, Uday V.},
  journal={Mathematical Programming},
  volume={198},
  number={2},
  pages={1227--1285},
  year={2023},
  publisher={Springer}
}

@article{cui-shanbhag-staudigl-2025,
  title={A regularized variance-reduced modified extragradient method for stochastic hierarchical games},
  author={Cui, Shisheng and Shanbhag, Uday V. and Staudigl, Mathias},
  journal={Journal of Optimization Theory and Applications},
  volume={206},
  number={1},
  pages={11},
  year={2025},
  publisher={Springer}
}

@article{cui-shanbhag-yousefian-2023,
  title={Complexity guarantees for an implicit smoothing-enabled method for stochastic {MPEC}s},
  author={Cui, Shisheng and Shanbhag, Uday V. and Yousefian, Farzad},
  journal={Mathematical Programming},
  volume={198},
  number={2},
  pages={1153--1225},
  year={2023},
  publisher={Springer}
}

@article{dang-lan-2015,
  title={On the convergence properties of non-euclidean extragradient methods for variational inequalities with generalized monotone operators},
  author={Dang, Cong D. and Lan, Guanghui},
  journal={Computational Optimization and Applications},
  volume={60},
  number={2},
  pages={277--310},
  year={2015},
  publisher={Springer}
}

@inproceedings{diakonikolas-daskalakis-jordan-2021,
  title={Efficient methods for structured nonconvex-nonconcave min-max optimization},
  author={Diakonikolas, Jelena and Daskalakis, Constantinos and Jordan, Michael I},
  booktitle={International Conference on Artificial Intelligence and Statistics},
  pages={2746--2754},
  year={2021},
  organization={PMLR}
}

@article{demiguel-xu-2009,
  title={A stochastic multiple-leader Stackelberg model: analysis, computation, and application},
  author={DeMiguel, Victor and Xu, Huifu},
  journal={Operations Research},
  volume={57},
  number={5},
  pages={1220--1235},
  year={2009},
  publisher={INFORMS}
}

@book{facchinei-pang-2003,
  title={Finite-Dimensional Variational Inequalities and Complementarity Problems},
  author={Facchinei, Francisco and Pang, Jong-Shi},
  year={2003},
  publisher={Springer}
}

@book{facchinei-pang-2009,
  title={Nash Equilibria: The Variational Approach},
  author={Facchinei, Francisco and Pang, Jong-Shi},
  year={2009},
  publisher={Convex Optimization in Signal Processing and Communications, Cambridge University Press}
}

@inproceedings{hsieh-iutzeler-malick-mertikopoulos-2020,
  title={Explore aggressively, update conservatively: Stochastic extragradient methods with variable stepsize scaling},
  author={Hsieh, Yu-Guan and Iutzeler, Franck and Malick, J{\'e}r{\^o}me and Mertikopoulos, Panayotis},
  booktitle={Advances in Neural Information Processing Systems},
  volume={33},
  pages={16223--16234},
  year={2020}
}

@article{fang-peterson-1982,
  title={Generalized variational inequalities},
  author={Fang, S.C. and Peterson, E.L.},
  journal={Journal of Optimization Theory and Applications},
  volume={38},
  number={3},
  pages={363--383},
  year={1982},
  publisher={Springer}
}

@article{ghadimi-lan-2013,
  title={Stochastic first-and zeroth-order methods for nonconvex stochastic programming},
  author={Ghadimi, Saeed and Lan, Guanghui},
  journal={SIAM Journal on Optimization},
  volume={23},
  number={4},
  pages={2341--2368},
  year={2013},
  publisher={SIAM}
}

@article{ghadimi-lan-zhang-2016,
  title={Mini-batch stochastic approximation methods for nonconvex stochastic composite optimization},
  author={Ghadimi, Saeed and Lan, Guanghui and Zhang, Hongchao},
  journal={Mathematical Programming},
  volume={155},
  number={1},
  pages={267--305},
  year={2016},
  publisher={Springer}
}

@article{goldstein-1977,
  title={Optimization of {L}ipschitz continuous functions},
  author={Goldstein, Allen A.},
  journal={Mathematical Programming},
  volume={13},
  number={1},
  pages={14--22},
  year={1977},
  publisher={Springer}
}

@article{hobbs-pang-2007,
  title={{N}ash-{C}ournot equilibria in electric power markets with piecewise linear demand functions and joint constraints},
  author={Hobbs, Benjamin F. and Pang, Jong-Shi},
  journal={Operations Research},
  volume={55},
  number={1},
  pages={113--127},
  year={2007},
  publisher={INFORMS}
}

@article{hong-wai-wang-yang-2023,
  title={A two-timescale stochastic algorithm framework for bilevel optimization: Complexity analysis and application to actor-critic},
  author={Hong, Mingyi and Wai, Hoi-To and Wang, Zhaoran and Yang, Zhuoran},
  journal={SIAM Journal on Optimization},
  volume={33},
  number={1},
  pages={147--180},
  year={2023},
  publisher={SIAM}
}

@article{huang-zhang-2024,
  title={Beyond monotone variational inequalities: Solution methods and iteration complexities},
  author={Huang, Kevin and Zhang, Shuzhong},
  journal={Pacific Journal of Optimization},
  volume={20},
  number={3},
  pages={403--428},
  year={2024},
  publisher={Yokohama}
}

@article{iusem-jofre-oliveira-thompson-2017,
  title={Extragradient method with variance reduction for stochastic variational inequalities},
  author={Iusem, Alfredo N. and Jofr{\'e}, Alejandro and Oliveira, Roberto Imbuzeiro and Thompson, Philip},
  journal={SIAM Journal on Optimization},
  volume={27},
  number={2},
  pages={686--724},
  year={2017},
  publisher={SIAM}
}

@article{iusem-jofre-oliveira-thompson-2019,
  title={Variance-based extragradient methods with line search for stochastic variational inequalities},
  author={Iusem, Alfredo N. and Jofr{\'e}, Alejandro and Oliveira, Roberto Imbuzeiro and Thompson, Philip},
  journal={SIAM Journal on Optimization},
  volume={29},
  number={1},
  pages={175--206},
  year={2019},
  publisher={SIAM}
}

@article{kannan-shanbhag-2012,
  title={Distributed computation of equilibria in monotone Nash games via iterative regularization techniques},
  author={Kannan, Aswin and Shanbhag, Uday V.},
  journal={SIAM Journal on Optimization},
  volume={22},
  number={4},
  pages={1177--1205},
  year={2012},
  publisher={SIAM}
}

@article{kannan-shanbhag-2019,
  title={Optimal stochastic extragradient schemes for pseudomonotone stochastic variational inequality problems and their variants},
  author={Kannan, Aswin and Shanbhag, Uday V.},
  journal={Computational Optimization and Applications},
  volume={74},
  number={3},
  pages={779--820},
  year={2019},
  publisher={Springer}
}

@article{kiwiel-2007,
  title={Convergence of the gradient sampling algorithm for nonsmooth nonconvex optimization},
  author={Kiwiel, Krzysztof C.},
  journal={SIAM Journal on Optimization},
  volume={18},
  number={2},
  pages={379--388},
  year={2007},
  publisher={SIAM}
}

@article{kiwiel-2010,
  title={A nonderivative version of the gradient sampling algorithm for nonsmooth nonconvex optimization},
  author={Kiwiel, Krzysztof C.},
  journal={SIAM Journal on Optimization},
  volume={20},
  number={4},
  pages={1983--1994},
  year={2010},
  publisher={SIAM}
}

@article{koshal-nedic-shanbhag-2013,
  title={Regularized iterative stochastic approximation methods for stochastic variational inequality problems},
  author={Koshal, Jayash and Nedi{\'c}, Angelia and Shanbhag, Uday V.},
  journal={IEEE Transactions on Automatic Control},
  volume={58},
  number={3},
  pages={594--609},
  year={2013},
  publisher={IEEE}
}

@article{koshal-nedic-shanbhag-2016,
  title={Distributed algorithms for aggregative games on graphs},
  author={Koshal, Jayash and Nedi{\'c}, Angelia and Shanbhag, Uday V.},
  journal={Operations Research},
  volume={64},
  number={3},
  pages={680--704},
  year={2016},
  publisher={INFORMS}
}

@article{kotsalis-lan-li-2022,
  title={Simple and optimal methods for stochastic variational inequalities, {I}: operator extrapolation},
  author={Kotsalis, Georgios and Lan, Guanghui and Li, Tianjiao},
  journal={SIAM Journal on Optimization},
  volume={32},
  number={3},
  pages={2041--2073},
  year={2022},
  publisher={SIAM}
}

@inproceedings{kovalev-gasnikov-2022,
  title={The first optimal algorithm for smooth and strongly-convex-strongly-concave minimax optimization},
  author={Kovalev, Dmitry and Gasnikov, Alexander},
  booktitle={Advances in Neural Information Processing Systems},
  volume={35},
  pages={14691--14703},
  year={2022}
}

@article{kuhn-shafiee-wiesemann-2025,
  title={Distributionally robust optimization},
  author={Kuhn, Daniel and Shafiee, Soroosh and Wiesemann, Wolfram},
  journal={Acta Numerica},
  volume={34},
  pages={579--804},
  year={2025},
  publisher={Cambridge University Press}
}

@inproceedings{lee-kim-2021,
  title={Fast extra gradient methods for smooth structured nonconvex-nonconcave minimax problems},
  author={Lee, Sucheol and Kim, Donghwan},
  booktitle={Advances in Neural Information Processing Systems},
  volume={34},
  pages={22588--22600},
  year={2021}
}

@article{lei-shanbhag-2020,
  title={Asynchronous schemes for stochastic and misspecified potential games and nonconvex optimization},
  author={Lei, Jinlong and Shanbhag, Uday V.},
  journal={Operations Research},
  volume={68},
  number={6},
  pages={1742--1766},
  year={2020},
  publisher={INFORMS}
}

@article{lei-shanbhag-2022,
  title={Distributed variable sample-size gradient-response and best-response schemes for stochastic Nash equilibrium problems},
  author={Lei, Jinlong and Shanbhag, Uday V.},
  journal={SIAM Journal on Optimization},
  volume={32},
  number={2},
  pages={573--603},
  year={2022},
  publisher={SIAM}
}

@article{lei-shanbhag-chen-2026,
  title={A Distributed Iterative Tikhonov Method for Networked Monotone Stochastic and Hierarchical Aggregative Games},
  author={Lei, Jinlong and Shanbhag, Uday V. and Chen, Jie},
  journal={Set-Valued and Variational Analysis},
  volume={34},
  number={2},
  pages={15},
  year={2026},
  publisher={Springer}
}

@article{lei-shanbhag-pang-sen-2020,
  title={On synchronous, asynchronous, and randomized best-response schemes for stochastic Nash games},
  author={Lei, Jinlong and Shanbhag, Uday V. and Pang, Jong-Shi and Sen, Suvrajeet},
  journal={Mathematics of Operations Research},
  volume={45},
  number={1},
  pages={157--190},
  year={2020},
  publisher={INFORMS}
}

@inproceedings{lin-jin-jordan-2020,
  title={Near-optimal algorithms for minimax optimization},
  author={Lin, Tianyi and Jin, Chi and Jordan, Michael I.},
  booktitle={Conference on Learning Theory},
  pages={2738--2779},
  year={2020},
  organization={PMLR}
}

@inproceedings{lin-zheng-jordon-2022,
  title={Gradient-free methods for deterministic and stochastic nonsmooth nonconvex optimization},
  author={Lin, Tianyi and Zheng, Zeyu and Jordan, Michael I.},
  booktitle={Advances in Neural Information Processing Systems},
  volume={35},
  pages={26160--26175},
  year={2022}
}

@article{marrinan-shanbhag-yousefian-2026,
  title={Zeroth-order gradient and quasi-newton methods for nonsmooth nonconvex stochastic optimization},
  author={Marrinan, Luke and Shanbhag, Uday V. and Yousefian, Farzad},
  journal={SIAM Journal on Optimization},
  volume={36},
  number={2},
  pages={564--596},
  year={2026},
  publisher={SIAM}
}

@article{metzler-hobbs-pang-2003,
  title={{N}ash-{C}ournot equilibria in power markets on a linearized DC network with arbitrage: Formulations and properties},
  author={Metzler, Carolyn and Hobbs, Benjamin F. and Pang, Jong-Shi},
  journal={Networks and Spatial Economics},
  volume={3},
  number={2},
  pages={123--150},
  year={2003},
  publisher={Springer}
}

@article{monderer-shapley-1996,
  title={Potential games},
  author={Monderer, Dov and Shapley, Lloyd S.},
  journal={Games and Economic Behavior},
  volume={14},
  number={1},
  pages={124--143},
  year={1996},
  publisher={Elsevier}
}

@book{mordukhovich-nam-2023,
  title={An Easy Path to Convex Analysis and Applications},
  author={Mordukhovich, Boris S. and Nam, Nguyen M.},
  year={2023},
  publisher={Springer}
}

@article{muu-nguyen-quy-2008,
  title={On {N}ash-{C}ournot oligopolistic market equilibrium models with concave cost functions},
  author={Muu, Le D. and Nguyen, V. H. and Quy, N. V.},
  journal={Journal of Global Optimization},
  volume={41},
  number={3},
  pages={351--364},
  year={2008},
  publisher={Springer}
}

@article{nash-1951,
  title={Non-cooperative games},
  author={Nash, John},
  journal={Annals of Mathematics},
  volume={54},
  number={2},
  pages={286-295},
  year={1951}
}

@article{nemirovski-juditsky-lan-shapiro-2009,
  title={Robust stochastic approximation approach to stochastic programming},
  author={Nemirovski, Arkadi and Juditsky, Anatoli and Lan, Guanghui and Shapiro, Alexander},
  journal={SIAM Journal on Optimization},
  volume={19},
  number={4},
  pages={1574--1609},
  year={2009},
  publisher={SIAM}
}

@book{osullivan-sheffrin-swan-2003,
  title={Economics: Principles in Action},
  author={O'sullivan, Arthur and Sheffrin, Steven M. and Swan, Kathy},
  year={2003}
}

@book{pang-razaviyayn-2016,
  title={A unified distributed algorithm for noncooperative games},
  author={Pang, Jong-Shi and Razaviyayn, Meisam},
  year={2016},
  publisher={Big Data over Networks, Cambridge University Press}
}

@article{pang-scutari-2011,
  title={Nonconvex games with side constraints},
  author={Pang, Jong-Shi and Scutari, Gesualdo},
  journal={SIAM Journal on Optimization},
  volume={21},
  number={4},
  pages={1491--1522},
  year={2011},
  publisher={SIAM}
}

@article{pang-scutari-2013,
  title={Joint sensing and power allocation in nonconvex cognitive radio games: Quasi-Nash equilibria},
  author={Pang, Jong-Shi and Scutari, Gesualdo},
  journal={IEEE Transactions on Signal Processing},
  volume={61},
  number={9},
  pages={2366--2382},
  year={2013},
  publisher={IEEE}
}

@article{patriksson-wynter-1999,
  title={Stochastic mathematical programs with equilibrium constraints},
  author={Patriksson, Michael and Wynter, Laura},
  journal={Operations Research Letters},
  volume={25},
  number={4},
  pages={159--167},
  year={1999},
  publisher={Elsevier}
}

@article{polyak-juditsky-1992,
  title={Acceleration of stochastic approximation by averaging},
  author={Polyak, Boris T. and Juditsky, Anatoli B.},
  journal={SIAM Journal on Control and Optimization},
  volume={30},
  number={4},
  pages={838--855},
  year={1992},
  publisher={SIAM}
}

@inproceedings{qiu-shanbhag-yousefian-2023,
  title={Zeroth-order methods for nondifferentiable, nonconvex, and hierarchical federated optimization},
  author={Qiu, Yuyang and Shanbhag, Uday V. and Yousefian, Farzad},
  booktitle={Advances in Neural Information Processing Systems},
  volume={36},
  pages={3425--3438},
  year={2023}
}

@article{ravat-shanbhag-2011,
  title={On the characterization of solution sets of smooth and nonsmooth convex stochastic {N}ash games},
  author={Ravat, Uma and Shanbhag, Uday V},
  journal={SIAM Journal on Optimization},
  volume={21},
  number={3},
  pages={1168--1199},
  year={2011},
  publisher={SIAM}
}

@phdthesis{razaviyayn-2014,
  title={Successive Convex Approximation: Analysis and Applications},
  author={Razaviyayn, Meisam},
  year={2014},
  school={University of Minnesota}
}

@article{robbins-monro-1951,
  title={A stochastic approximation method},
  author={Robbins, Herbert and Monro, Sutton},
  journal={The Annals of Mathematical Statistics},
  pages={400--407},
  year={1951},
  publisher={JSTOR}
}

@book{rockafellar-wets-1998,
  title={Variational Analysis},
  author={Rockafellar, R. Tyrrell and Wets, Roger J-B},
  year={1998},
  publisher={Springer}
}

@book{sandholm-2010,
  title={Population Games and Evolutionary Dynamics},
  author={Sandholm, William H.},
  year={2010},
  publisher={MIT press}
}

@article{scutari-palomar-facchinei-pang-2010,
  title={Convex optimization, game theory, and variational inequality theory},
  author={Scutari, Gesualdo and Palomar, Daniel P. and Facchinei, Francisco and Pang, Jong-Shi},
  journal={IEEE Signal Processing Magazine},
  volume={27},
  number={3},
  pages={35--49},
  year={2010},
  publisher={IEEE}
}

@article{scutari-palomar-pang-facchinei-2009,
  title={Flexible design of cognitive radio wireless systems},
  author={Scutari, Gesualdo and Palomar, Daniel P. and Pang, Jong-Shi and Facchinei, Francisco},
  journal={IEEE Signal Processing Magazine},
  volume={26},
  number={5},
  pages={107--123},
  year={2009},
  publisher={IEEE}
}

@article{steklov-1907,
  title={Sur les expressions asymptotiques de certaines fonctions définies par les equations differentielles du second ordre et leurs applications au probleme du developement dune fonction arbitraire en series procedant suivant les diverses fonctions},
  author={Steklov, V.A.},
  journal={Communications de la Société mathématique de Kharkow},
  volume={10},
  pages={97--199},
  year={1907}
}

@inproceedings{vankov-nedic-sankar-2023,
  title={Last iterate convergence of Popov method for non-monotone stochastic variational inequalities},
  author={Vankov, Daniil and Nedi{\'c}, Angelia and Sankar, Lalitha},
  booktitle={OPT2023: 15th Annual Workshop on Optimization for Machine Learning},
  pages={1--27},
  year={2023}
}

@inproceedings{wang-li-2020,
  title={Improved algorithms for convex-concave minimax optimization},
  author={Wang, Yuanhao and Li, Jian},
  booktitle={Advances in Neural Information Processing Systems},
  volume={33},
  pages={4800--4810},
  year={2020}
}

@article{xiao-shanbhag-2025-GR,
  title={Computing equilibria in stochastic nonconvex and non-monotone games via gradient-response schemes},
  author={Xiao, Zhuoyu and Shanbhag, Uday V.},
  journal={arXiv:2504.14056v3},
  year={2025}
}

@article{xiao-shanbhag-2026-BR,
  title={Equilibrium invariance, proximality, and surrogation: Moreau-smoothed best-response pathways in stochastic nonsmooth games},
  author={Xiao, Zhuoyu and Shanbhag, Uday V.},
  journal={arXiv:2603.00934},
  year={2026}
}

@article{yousefian-nedic-shanbhag-2016,
  title={Self-tuned stochastic approximation schemes for non-Lipschitzian stochastic multi-user optimization and Nash games},
  author={Yousefian, Farzad and Nedi{\'c}, Angelia and Shanbhag, Uday V.},
  journal={IEEE Transactions on Automatic Control},
  volume={61},
  number={7},
  pages={1753--1766},
  year={2016},
  publisher={IEEE}
}

@article{yousefian-nedic-shanbhag-2017,
  title={On smoothing, regularization, and averaging in stochastic approximation methods for stochastic variational inequality problems},
  author={Yousefian, Farzad and Nedi{\'c}, Angelia and Shanbhag, Uday V.},
  journal={Mathematical Programming},
  volume={165},
  number={1},
  pages={391--431},
  year={2017},
  publisher={Springer}
}
 
\end{document}